\newcommand{\C}{\mathbb{C}}
\newcommand{\QQ}{\mathbb{Q}}
\newcommand{\NN}{\mathbb{N}}
\newcommand{\PP}{\mathbb{P}}
\newcommand{\LL}{\mathcal{L}}
\newcommand{\OO}{\mathcal O}
\newcommand{\Ss}{\mathcal S}
\newcommand{\XX}{\mathcal X}
\newcommand{\YY}{\mathcal Y}
\newcommand{\VV}{\mathcal V}
\newcommand{\FF}{\mathcal F}
\newcommand{\Z}{\mathcal Z}
\newcommand{\MM}{\mathcal M}
\newcommand{\BB}{\mathfrak B}
\newcommand{\gr}{\hbox{Gr}}
\newcommand{\wt}{\widetilde}
\newcommand{\rom}{\romannumeral}
\newcommand*{\da@rightarrow}{\mathchar"0\hexnumber@\symAMSa 4B }
\newcommand*{\da@leftarrow}{\mathchar"0\hexnumber@\symAMSa 4C }
\newcommand*{\xdashrightarrow}[2][]{%
  \mathrel{%
    \mathpalette{\da@xarrow{#1}{#2}{}\da@rightarrow{\,}{}}{}%
  }%
}
\newcommand{\xdashleftarrow}[2][]{%
  \mathrel{%
    \mathpalette{\da@xarrow{#1}{#2}\da@leftarrow{}{}{\,}}{}%
  }%
}
\newcommand*{\da@xarrow}[7]{%
  \sbox0{$\ifx#7\scriptstyle\scriptscriptstyle\else\scriptstyle\fi#5#1#6\m@th$}%
  \sbox2{$\ifx#7\scriptstyle\scriptscriptstyle\else\scriptstyle\fi#5#2#6\m@th$}%
  \sbox4{$#7\dabar@\m@th$}%
  \dimen@=\wd0 %
  \ifdim\wd2 >\dimen@
    \dimen@=\wd2 %
  \fi
  \count@=2 %
  \def\da@bars{\dabar@\dabar@}%
  \@whiledim\count@\wd4<\dimen@\do{%
    \advance\count@\@ne
    \expandafter\def\expandafter\da@bars\expandafter{%
      \da@bars
      \dabar@ 
    }%
  }%
  \mathrel{#3}%
  \mathrel{%
    \mathop{\da@bars}\limits
    \ifx\\#1\\%
    \else
      _{\copy0}%
    \fi
    \ifx\\#2\\%
    \else
      ^{\copy2}%
    \fi
  }%
  \mathrel{#4}%
}
\DeclareMathOperator{\aut}{Aut}
\DeclareMathOperator{\bir}{Bir}
\DeclareMathOperator{\ide}{id}
\DeclareMathOperator{\ima}{Im}
\DeclareMathOperator{\og}{\mathbb{OG}}
\newtheorem{theorem}{Theorem}[section]
\newtheorem{lemma}[theorem]{Lemma}
\newtheorem{sublemma}[theorem]{Sublemma}
\newtheorem{corollary}[theorem]{Corollary}
\newtheorem{proposition}[theorem]{Proposition}
\newtheorem{conjecture}[theorem]{Conjecture}
\newtheorem{remark}[theorem]{Remark}
\newtheorem{definition}[theorem]{Definition}
\newtheorem{convention}{Conventions}
\newtheorem{question}[theorem]{Question}
\newtheorem{notation}[theorem]{Notation}
\newtheorem{nonumbering}{Theorem}
\newtheorem{nonumberingc}{Corollary}
\newtheorem{nonumberingt}{Acknowledgements}
\begin{document}
\author[Robert Laterveer]
{Robert Laterveer}

\address{Institut de Recherche Math\'ematique Avanc\'ee,
CNRS -- Universit\'e 
de Strasbourg,\
7 Rue Ren\'e Des\-car\-tes, 67084 Strasbourg CEDEX,
FRANCE.}
\email{robert.laterveer@math.unistra.fr}

\title{On the Chow groups of certain EPW sextics}

\begin{abstract} This note is about the Hilbert square $X=S^{[2]}$, where $S$ is a general $K3$ surface of degree $10$, and the anti--symplectic birational involution $\iota$ of $X$ constructed by O'Grady. The main result is that the action of $\iota$ on certain pieces of the Chow groups of $X$ is as expected by Bloch's conjecture. Since $X$ is birational to a double EPW sextic $X^\prime$, this has consequences for the Chow ring of the EPW sextic $Y\subset\PP^5$ associated to $X^\prime$.
 \end{abstract}

\keywords{Algebraic cycles, Chow groups, motives, hyperk\"ahler varieties, anti--symplectic involution, K3 surfaces, (double) EPW sextics, Beauville's splitting principle, multiplicative Chow--K\"unneth decomposition, spread of algebraic cycles}
\subjclass[2010]{Primary 14C15, 14C25, 14C30.}

\maketitle

\section{Introduction}

For a smooth projective variety $X$ over $\C$, let $A^i(X):=CH^i(X)_{\QQ}$ denote the Chow groups (i.e. the groups of codimension $i$ algebraic cycles on $X$ with $\QQ$--coefficients, modulo rational equivalence). 
Let $A^i_{hom}(X)$ and $A^i_{AJ}(X)\subset A^i(X)$ denote the subgroup of homologically trivial (resp. Abel--Jacobi trivial) cycles. 
It seems fair to say that Chow groups of codimension $i>1$ cycles are still poorly understood. To cite but one example, there is Bloch's famous conjecture (which famously is still open for surfaces of general type with geometric genus $0$):

\begin{conjecture}[Bloch \cite{B}]\label{blochconj0} Let $S$ be a smooth projective surface. Let $\Gamma\in A^2(S\times S)$ be a correspondence such that
  \[ \Gamma^\ast=0\colon\ \ \ H^2(X,\OO_X)\ \to\ H^2(X,\OO_X)\ .\]
  Then
  \[ \Gamma^\ast=0\colon\ \ \ A^2_{AJ}(X)\ \to\ A^2_{AJ}(X)\ .\]
  \end{conjecture}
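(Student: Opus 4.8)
The statement is \emph{Bloch's conjecture} and is, of course, wide open; what one can honestly do is outline the strategy that \emph{does} settle it for surfaces whose motive is finite--dimensional in the sense of Kimura and O'Sullivan, and then point to exactly where it breaks down. The plan is to pass to the \emph{transcendental motive} $t_2(S)$ of Kahn--Murre--Pedrini. Recall that a Chow--K\"unneth decomposition of $S$ refines to a splitting $h_2(S)=h_2^{\mathrm{alg}}(S)\oplus t_2(S)$, cut out by an idempotent correspondence $p\in A^2(S\times S)$, with the two defining properties
\[ A^2(t_2(S))=p_\ast A^2_{hom}(S)=A^2_{AJ}(S),\qquad H^\ast(t_2(S))=H^2_{\mathrm{tr}}(S), \]
where $H^2_{\mathrm{tr}}(S)$ denotes the transcendental part of $H^2(S)$, i.e.\ the orthogonal complement of the N\'eron--Severi classes. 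Thus the whole problem lives inside the single motive $t_2(S)$: writing $g:=p\circ\Gamma\circ p\in\operatorname{End}(t_2(S))$, it suffices to prove that $g_\ast=0$ on $A^2(t_2(S))$.

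First I would upgrade the hypothesis from $H^2(S,\OO_S)=H^{0,2}(S)$ to all of $H^2_{\mathrm{tr}}(S)$. Since $\Gamma$ is algebraic, $\Gamma^\ast$ is a morphism of $\QQ$--Hodge structures commuting with complex conjugation, so $\Gamma^\ast=0$ on $H^{0,2}$ forces $\Gamma^\ast=0$ on $H^{2,0}$ as well. Consequently the image of $\Gamma^\ast$ restricted to $H^2_{\mathrm{tr}}(S)$ is a sub--Hodge structure whose $(2,0)$-- and $(0,2)$--components vanish, hence of pure type $(1,1)$; but $H^2_{\mathrm{tr}}(S)$ by construction contains no nonzero Hodge classes, so this image is $0$. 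Therefore $g$ is \emph{homologically trivial}: $g_{\hom}=0$ on $H^2_{\mathrm{tr}}(S)$.

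The heart of the matter is converting homological triviality of $g$ into vanishing of $g_\ast$ on Chow groups. Here I would invoke finite--dimensionality of the motive of $S$: by the nilpotence theorem of Kimura and O'Sullivan, a homologically trivial endomorphism of a finite--dimensional motive is \emph{nilpotent}, so $g^{\circ N}=0$ for some $N$, whence $g_\ast$ acts nilpotently on $A^2_{AJ}(S)$. In the cases that actually occur this already suffices: when $p_g(S)=0$ the relevant $g$ is the idempotent $p$ itself, and a nilpotent idempotent vanishes, giving $t_2(S)=0$ and $A^2_{AJ}(S)=0$; when $\Gamma=\Gamma_\sigma-\Delta$ for an involution $\sigma$ fixing $H^{2,0}$, the induced $\tau=p\circ\Gamma_\sigma\circ p$ is an involution of $t_2(S)$ with $\tau-\ide$ nilpotent, and separability of $x^2-1$ forces $\tau=\ide$, i.e.\ $g=0$.

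\emph{The main obstacle} is precisely this appeal to finite--dimensionality: for surfaces of general type with $p_g=0$ the finite--dimensionality of $t_2(S)$ is unknown, and this is exactly the range in which the conjecture remains open. Worse, for a \emph{general} correspondence $\Gamma$ the argument above delivers only a nilpotent operator on $A^2_{AJ}(S)$ rather than the zero operator, so even granting finite--dimensionality one still needs extra structure on $\Gamma$ (idempotency, or an involution as above) to collapse the nilpotent to zero. It is this gap between ``homologically trivial'' and ``rationally trivial'' on $0$--cycles, compounded by the unavailability of finite--dimensionality, that makes the conjecture so resistant.
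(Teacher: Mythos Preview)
The paper does not prove this statement: it is labelled a \emph{conjecture} (Conjecture~1.1) and the introduction explicitly notes it ``famously is still open for surfaces of general type with geometric genus~$0$''. There is therefore no paper proof to compare against. You have correctly recognised this and, rather than pretending to prove an open conjecture, you give a careful discussion of the standard conditional approach via the transcendental motive $t_2(S)$ and Kimura--O'Sullivan finite-dimensionality; this is entirely appropriate.

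Your discussion is essentially accurate. The passage from $H^{0,2}$ to all of $H^2_{\mathrm{tr}}$ via Hodge-theoretic irreducibility is standard; the reduction to an endomorphism $g$ of $t_2(S)$ via Kahn--Murre--Pedrini is the right framework; and you correctly isolate the two genuine obstacles: (i) finite-dimensionality is not known for the surfaces one most cares about, and (ii) even granting it, nilpotence of $g$ only yields $g_\ast^N=0$, not $g_\ast=0$, so one needs extra algebraic structure on $\Gamma$ (idempotent, involution, \dots) to conclude. One small sharpening: the case usually quoted as ``Bloch's conjecture holds for surfaces with finite-dimensional motive'' is precisely your $p_g=0$ case, where the projector $p$ itself becomes nilpotent and hence zero; your write-up makes this clear, but it may be worth emphasising that the \emph{general} correspondence version of the conjecture is not known to follow from finite-dimensionality alone, which is a point sometimes elided in the literature.
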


For varieties of higher dimension, versions of conjecture \ref{blochconj0} can be stated for $0$--cycles and for codimension $2$--cycles:

\begin{conjecture}\label{blochconj} Let $X$ be a smooth projective variety of dimension $n$. Let $\Gamma\in A^n(X\times X)$ be a correspondence such that
  \[ \Gamma^\ast=0\colon\ \ \ H^n(X,\OO_X)\ \to\ H^n(X,\OO_X)\ .\]
  Then
  \[ \Gamma^\ast=0\colon\ \ \ F^n A^n_{}(X)\ \to\ A^n_{}(X)\ .\]
  \end{conjecture}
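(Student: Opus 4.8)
The statement is the deepest, $0$--cycle instance of the Bloch--Beilinson philosophy: it predicts that the bottom step $F^nA^n(X)$ of the conjectural Bloch--Beilinson filtration on the $0$--cycles of an $n$--fold is governed by the single piece $H^n(X,\OO_X)=H^{0,n}(X)$, and for $n=2$ it specializes to Conjecture \ref{blochconj0}. My plan is to pass from cycles to motives, isolate the ``transcendental'' summand of $\mathfrak h^n(X)$ whose Hodge realization records exactly $H^{0,n}(X)$, and argue that the cohomological hypothesis forces $\Gamma$ to act trivially on that summand, hence on the Chow group it controls.

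Concretely, I would first invoke a self--dual Chow--K\"unneth decomposition $\mathfrak h(X)=\bigoplus_i\mathfrak h^i(X)$ together with Murre's conjectures; finite--dimensionality of $\mathfrak h(X)$ in the sense of Kimura--O'Sullivan is what one would use to produce such a package. These equip $A^n(X)$ with the filtration $F^\bullet$ and, crucially, guarantee that $\Gamma^\ast$ acts on each graded piece $\mathrm{Gr}^\nu_FA^n(X)$ through the cohomological action of $\Gamma$ on a corresponding Hodge summand. For the bottom piece one has $\mathrm{Gr}^n_FA^n(X)=F^nA^n(X)$ because $F^{n+1}A^n(X)=0$, and the relevant summand is precisely $H^{0,n}(X)=H^n(X,\OO_X)$; hence the hypothesis $\Gamma^\ast=0$ on $H^n(X,\OO_X)$ delivers $\Gamma^\ast=0$ on $F^nA^n(X)$. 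In motivic language, with $p$ the projector onto the transcendental part $\mathfrak t^n(X)$, the hypothesis makes $p\circ\Gamma\circ p$ homologically trivial, and the filtration property identifies its action on $A^n(\mathfrak t^n(X))=F^nA^n(X)$ with its (vanishing) cohomological action.

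The main obstacle is that neither Murre's conjectures nor finite--dimensionality of $\mathfrak h(X)$ is available for the varieties one really cares about: a general K3 surface, and hence its Hilbert square, is not known to have a finite--dimensional motive, so the motivic route above is only conditional. The unconditional replacement is Voisin's \emph{spreading} technique. One realizes $X$ in a family $\mathcal X\to B$ over the appropriate moduli space, spreads both the $0$--cycle and the self--correspondence over $B$, and applies a relative decomposition of the diagonal to convert the fibrewise Hodge--theoretic hypothesis into a constraint on the cohomology class of the spread cycle. The genuinely hard part is then to show, using the variation of Hodge structure and the global monodromy of the family, that this class is supported on a proper closed subset of $\mathcal X\times_B\mathcal X$; a second application of the decomposition of the diagonal turns such support into the desired vanishing in $F^nA^n(X)$. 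It is precisely here that the specific geometry of the degree $10$ K3 surface, its Hilbert square, and O'Grady's anti--symplectic involution must be brought in to make the global argument go through.
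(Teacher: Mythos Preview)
The statement you are attempting to prove is a \emph{conjecture} in the paper, not a theorem: the paper gives no proof of Conjecture~\ref{blochconj}, nor does it claim one. The filtration $F^\ast A^\ast$ appearing in the statement is explicitly described as ``the conjectural Bloch--Beilinson filtration,'' so even formulating the conclusion unconditionally is not possible at present. There is therefore nothing in the paper to compare your proposal against.

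Your write--up reflects this: the first approach you sketch (Chow--K\"unneth plus Murre plus Kimura finite--dimensionality) is, as you note yourself, entirely conditional, and in fact these hypotheses together essentially \emph{are} the Bloch--Beilinson package, so the argument is close to circular. The second paragraph then drifts away from the general conjecture toward the specific situation of Theorem~\ref{main}, which is a different statement. The paper does use Voisin's spreading technique there, but to establish particular cases of Conjecture~\ref{blochHK} for the Hilbert square of a degree~$10$ K3 surface, not to prove Conjecture~\ref{blochconj} in any generality. So the genuine gap is structural: you are proposing a proof of an open conjecture, and the substantive content you invoke (spreading, the geometry of O'Grady's involution) belongs to the proof of Theorem~\ref{main}, not to a proof of Conjecture~\ref{blochconj}.
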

  
Here $F^\ast A^\ast()$ denotes the conjectural Bloch--Beilinson filtration, expected to exist for all smooth projective varieties \cite{J2}, \cite{J4}, \cite{Mur}, \cite{MNP}.
   
 \begin{conjecture}[]\label{blochconj2} Let $X$ be a smooth projective variety of dimension $n$. Let $\Gamma\in A^n(X\times X)$ be a correspondence such that
  \[ \Gamma^\ast=0\colon\ \ \ H^2(X,\OO_X)\ \to\ H^2(X,\OO_X)\ .\]
  Then
  \[ \Gamma^\ast=0\colon\ \ \ A^2_{AJ}(X)\ \to\ A^2_{AJ}(X)\ .\]
  \end{conjecture}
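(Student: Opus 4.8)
The plan is to proceed in two steps, a cohomological one and a cycle-theoretic one. The first step unwinds the hypothesis. A correspondence $\Gamma$ induces a morphism of $\QQ$--Hodge structures $\Gamma^\ast$ on $H^2(X)$, and since this map is defined over $\QQ$ it commutes with complex conjugation on $H^2(X,\C)$. Hence the assumed vanishing $\Gamma^\ast=0$ on $H^2(X,\OO_X)=H^{0,2}(X)$ forces $\Gamma^\ast=0$ on the conjugate space $H^{2,0}(X)$ as well. Because the transcendental part $H^2_{tr}(X)$ is by definition the smallest rational sub-Hodge structure of $H^2(X)$ whose complexification contains $H^{2,0}(X)$, and $\ker\Gamma^\ast$ is itself a sub-Hodge structure, we conclude $\Gamma^\ast=0$ on all of $H^2_{tr}(X)$. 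The only cohomology on which $\Gamma^\ast$ may survive is thus the algebraic part of $H^2(X)$.

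The second step would transport this to Chow groups. The guiding principle of the Bloch--Beilinson philosophy is that, for a filtration $F^\bullet$ functorial under correspondences and satisfying $F^3 A^2(X)=0$, one has $A^2_{AJ}(X)=\gr^2_F A^2(X)$, and the action of $\Gamma^\ast$ on this top graded piece is induced by — indeed factors through — the action of $\Gamma^\ast$ on the transcendental cohomology $H^2_{tr}(X)$. Granting such a factorization, the vanishing obtained in the first step propagates: $\Gamma^\ast$ kills $H^2_{tr}(X)$, hence kills $\gr^2_F A^2(X)=A^2_{AJ}(X)$, which is exactly the assertion of Conjecture~\ref{blochconj2}.

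The hard part — and the reason this remains a conjecture rather than a theorem — is precisely the factorization invoked in the second step. For an arbitrary smooth projective variety neither the existence of the Bloch--Beilinson filtration nor the asserted functorial control of $\gr^2_F A^2(X)$ by $H^2_{tr}(X)$ is known; the statement is essentially a fragment of that conjectural package and cannot be established in full generality by current methods. The realistic route, taken in this note for $X=S^{[2]}$ with $S$ a general degree-$10$ $K3$ surface, is to supply the missing input by hand: one constructs an explicit \emph{multiplicative Chow--Künneth decomposition} and deploys Voisin's method of spreading out algebraic cycles, which together give concrete control of $A^2_{AJ}(X)$ in terms of transcendental cohomology and thereby let the above two-step argument go through in this special case.
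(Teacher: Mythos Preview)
The statement in question is labelled a \emph{conjecture} in the paper and is treated as such: the paper offers no proof of it, only uses it as motivation for the expected behaviour of anti--symplectic involutions on Chow groups (Conjecture~\ref{blochHK}). Your write-up correctly recognises this, and your two-step outline is the standard heuristic: the cohomological reduction from $H^2(X,\OO_X)$ to $H^2_{tr}(X)$ is unconditional and correctly argued, while the passage to $A^2_{AJ}(X)$ is contingent on the Bloch--Beilinson package, exactly as you say. There is therefore nothing to compare against, and your assessment of the situation --- including the remark that the paper's actual contribution is to verify a special case via MCK decompositions and spreading --- is accurate.
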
  

Let us now restrict attention to hyperk\"ahler varieties $X$ (by which we mean: projective irreducible holomorphic symplectic manifolds \cite{Beau1}, \cite{Beau0}). For the purposes of this introduction, we will optimistically assume the Chow ring of $X$ has a splitting
    \[ A^i(X) = \bigoplus_j A^i_{(j)}(X)\ \]
  such that $A^\ast_{(\ast)}(X)$ is a bigraded ring, and the piece $A^i_{(j)}(X)$ is isomorphic to the graded
  $\gr^j_F A^i(X)$ for the conjectural Bloch--Beilinson filtration mentioned above. (This is expected to be the case for all hyperk\"ahler varieties \cite{Beau3}.)  
  
  Here is what conjectures \ref{blochconj} and \ref{blochconj2} predict for the action of an anti--symplectic involution (i.e., an involution acting as $-1$ on the symplectic form) on the Chow groups of $X$:

 \begin{conjecture}\label{blochHK} Let $X$ be a hyperk\"ahler variety of dimension $4$. Let $\iota$ be an anti--symplectic involution of $X$. Then
   \[  \begin{split}  \iota^\ast =-\ide\colon&\ \ \ A^i_{(2)}(X)\ \to\ A^i_{(2)}(X)\ \ \ \hbox{for\ }i=2,4\ ;\\
                             \iota^\ast =\ide\colon&\ \ \ A^4_{(4)}(X)\ \to\ A^4_{(4)}(X)\ .\\
                          \end{split}\]
     \end{conjecture}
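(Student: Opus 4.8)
The plan is to prove Conjecture \ref{blochHK} for the specific variety $X=S^{[2]}$ and O'Grady's anti--symplectic involution $\iota$ of the abstract. Since $X$ is birational to the double EPW sextic $X^\prime$, on which $\iota$ is a regular automorphism, and since birational hyperk\"ahler varieties have isomorphic Chow rings respecting the multiplicative Chow--K\"unneth (MCK) decomposition (Rie\ss, Shen--Vial), $\iota^\ast$ is a well--defined automorphism of the bigraded ring $A^\ast_{(\ast)}(X)$ preserving every piece $A^i_{(j)}(X)$. The first step is to rewrite the three pieces occurring in the conjecture in terms of the surface $S$. Using the de Cataldo--Migliorini isomorphism $\mathfrak h(S^{[2]})\cong\operatorname{Sym}^2\mathfrak h(S)\oplus\mathfrak h(S)(-1)$, the vanishing $A^3(S)=A^4(S)=0$, and the multiplicativity of the MCK, one obtains canonical identifications $A^2_{(2)}(X)\cong A^2_{hom}(S)$ and $A^4_{(4)}(X)\cong\operatorname{Sym}^2 A^2_{hom}(S)$, while $A^4_{(2)}(X)$ is the product of $A^2_{(2)}(X)$ with algebraic classes in $A^2_{(0)}(X)$; here $A^2_{hom}(S)=A^2_{AJ}(S)$ is the transcendental group of zero--cycles on the $K3$ surface.

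By Shen--Vial's explicit description of the Chow ring, the two codimension--$4$ pieces are governed by $A^2_{(2)}(X)$ through the ring structure: the multiplication $A^2_{(2)}(X)\otimes A^2_{(2)}(X)\to A^4_{(4)}(X)$ is surjective, and $A^4_{(2)}(X)$ is spanned by products of $A^2_{(2)}(X)$ with $A^2_{(0)}(X)$. Because $\iota^\ast$ is a ring homomorphism, and because its action on the finite--dimensional, cohomologically detected piece $A^2_{(0)}(X)$ is read off from O'Grady's computation of $\iota^\ast$ on $H^\ast(X)$ (the relevant algebraic classes being $\iota^\ast$--invariant), the required signs on both codimension--$4$ pieces follow from a single sign on $A^2_{(2)}(X)$: indeed $(-1)^2=+1$ produces $\iota^\ast=+\ide$ on $\operatorname{Sym}^2 A^2_{hom}(S)=A^4_{(4)}(X)$, and one factor $-1$ produces $\iota^\ast=-\ide$ on $A^4_{(2)}(X)$. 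The whole theorem thus collapses to the assertion $\iota^\ast=-\ide$ on $A^2_{(2)}(X)\cong A^2_{hom}(S)$.

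Next I would settle the cohomological shadow of this assertion. The isomorphism $A^2_{(2)}(X)\cong A^2_{hom}(S)$ is induced by explicit incidence correspondences, and conjugating $\iota^\ast$ through them produces a self--correspondence $\gamma\in A^2(S\times S)$, factoring through the transcendental motive $t_2(S)$, whose action on $H^2_{tr}(S)=H^2_{tr}(X)$ coincides with that of $\iota^\ast$ on $H^2(X)$. Since $\iota$ is anti--symplectic it acts by $-1$ on $H^{2,0}(S)=\C\sigma$; and for a general degree--$10$ $K3$ surface the Picard rank is $1$, so $H^2_{tr}(S)$ is an irreducible Hodge structure with $\operatorname{End}_{\mathrm{Hdg}}(H^2_{tr}(S))=\QQ$. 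Schur's lemma then forces $\iota^\ast$ to be a rational scalar on $H^2_{tr}(S)$, necessarily $-1$. Hence $\gamma$ acts as $-\ide$ on $H^2_{tr}(S)$; equivalently the correspondence $\Gamma:=\gamma+\Delta_S$ acts trivially on $H^2(S,\OO_S)$, which is precisely the hypothesis of Bloch's Conjecture \ref{blochconj0} for the surface $S$.

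The final step --- upgrading this cohomological vanishing to the rational equivalence $\gamma=-\ide$ on $A^2_{hom}(S)$, i.e. verifying the conclusion of Conjecture \ref{blochconj0} for $\Gamma$ --- is the main obstacle, and it is genuinely hard. The difficulty is structural: $\iota$ is an involution of $X$ that is \emph{not} induced by any automorphism of $S$, so neither the symplectic--automorphism arguments of Voisin and Huybrechts nor Kimura finite--dimensionality (unknown for the general degree--$10$ $K3$) apply directly, and one cannot run the usual nilpotence argument. Instead I would use Voisin's method of \emph{spreading out} algebraic cycles: place $S$ in its $19$--dimensional family $\mathcal S\to B$ of degree--$10$ $K3$ surfaces (equivalently, of double EPW sextics with their covering involution), assemble the $\gamma$'s into a relative self--correspondence that is fibrewise cohomologically $-\ide$ on the transcendental part, and exploit that this family has maximal variation of Hodge structure and full monodromy. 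A Bloch--Srinivas--type decomposition of the relative correspondence on the total space, combined with the fibrewise cohomological triviality, should then force $\gamma=-\ide$ on $A^2_{hom}(S_b)$ for the very general fibre, and a specialisation and countability argument extends this to the general $S$. Making the hypotheses of the spreading principle precise --- in particular controlling the geometry of the total space and the locus where the method could degenerate --- is where the real work lies.
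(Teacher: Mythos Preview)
Your overall strategy --- reduce everything to the action of $\iota^\ast$ on $A^2_{(2)}(X)$ via the multiplicative structure, then settle that piece by Voisin's spreading technique over the family of degree--$10$ $K3$ surfaces --- is exactly the architecture of the paper's proof of theorem \ref{main}. But there is a genuine gap, and it concerns precisely the point where your proposal is stronger than what the paper proves.

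You assert that $\iota^\ast$ is a well--defined automorphism of the bigraded ring preserving every piece $A^i_{(j)}(X)$, appealing to the fact that $\iota$ becomes regular on the double EPW sextic $X^\prime$ and to Rie\ss/Shen--Vial. Neither input gives this. First, $\iota^\prime$ is \emph{not} known to be a biregular automorphism of $X^\prime$: the regular involution lives on the singular double cover $X_A$, and after the small resolution $X^\prime=X_A^\epsilon\to X_A$ one only gets a birational $\iota^\prime\in\bir(X^\prime)$ (see the proof of theorem \ref{og2}). Second, even for a biregular automorphism it is not automatic that it commutes with the Fourier transform, because the unicity of the cycle $L$ lifting the Beauville--Bogomolov class is only known in cohomology, not in $A^2(X\times X)$ (this is spelled out in remark \ref{rem}). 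The paper cannot close this gap, and this is exactly why theorem \ref{main} carries the projectors $(\Pi_2^X)_\ast$ and $(\Pi_4^X)_\ast$ in front of two of the statements, and why the first question of section \ref{finsec} asks whether they can be removed. Your proposal silently assumes the answer is yes; without that, your deduction of the $A^4_{(4)}$ and $A^4_{(2)}$ statements from the $A^2_{(2)}$ statement does not go through as cleanly as you claim. In particular, the claim ``the relevant algebraic classes in $A^2_{(0)}(X)$ are $\iota^\ast$--invariant'' is not a cohomological statement you can simply read off O'Grady: the paper needs a separate spreading argument (proposition \ref{onl}) to prove $\iota^\ast(l)=l$ in $A^2(X)$, and even then the reduction uses \cite[Proposition B.6]{SV} carefully because $\iota$ is only birational.

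On the spreading step itself, your sketch is in the right spirit but omits the decisive technical input. The paper does not try to verify Bloch's conjecture for an abstract correspondence on $S\times S$; what makes the argument go is the Mukai model of theorem \ref{muk} (a degree--$10$ $K3$ is a complete intersection in $G(2,5)$), which puts the family in the setting where Voisin's result \cite[Proposition 3.13]{V0} applies and yields the vanishing $A^2_{hom}(\Ss\times_B\Ss)=0$ (proposition \ref{vanish}). This is what turns the fibrewise cohomological triviality into an honest rational equivalence; the ``Bloch--Srinivas--type decomposition on the total space'' you allude to is not by itself enough.
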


(The statement for $A^4_{(4)}(X)= F^4 A^4(X)$ is conjecture \ref{blochconj} applied to the graph of $\iota$. The statement for $A^2_{(2)}(X)=A^2_{AJ}(X)$ is conjecture \ref{blochconj2}
applied to the graph of $\iota$. The statement for $A^4_{(2)}(X)$ then follows from the expected ``hard Lefschetz'' isomorphism $A^2_{(2)}(X)\xrightarrow{\cong} A^4_{(2)}(X)$.)

The main result of this note establishes a weak form of conjecture \ref{blochHK} for a $19$--dimensional family of hyperk\"ahler fourfolds:

\begin{nonumbering}[=theorem \ref{main}]
Let $X$ be the Hilbert scheme $S^{[2]}$, where $S$ is a very general $K3$ surface of degree $d=10$. Let $\iota\in\bir(X)$ be the anti--symplectic involution constructed by O'Grady \cite{OG2}. Then
   \[    \begin{split}         \iota^\ast=\ide\colon&\ \ \ A^4_{(0)}(X)\ \to\ A^4_{(0)}(X)\ ;\\      
                                    \iota^\ast =-\ide\colon&\ \ \ A^4_{(2)}(X)\ \to\ A^4_{(2)}(X)\ ;\\
                             (\Pi_2^X)_\ast \iota^\ast=-\ide \colon& \ \ \ A^2_{(2)}(X)\ \to\ A^2_{(2)}(X)\ ;\\           
                                         (\Pi_4^X)_\ast      \iota^\ast =\ide\colon&\ \ \ A^4_{(4)}(X)\ \to\ A^4_{(4)}(X)\ .\\
                          \end{split}\]
  \end{nonumbering}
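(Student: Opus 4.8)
The plan is to separate the Hodge-theoretic input from the cycle-theoretic input, and to use the very-general hypothesis on $S$ as the engine that converts the former into the latter.

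The first identity is essentially formal. The piece $A^4_{(0)}(X)=\QQ\,\oo_X$ is one-dimensional, spanned by the Beauville--Voisin class, and any element of $\bir(X)$ preserves $\oo_X$ (it is the class of a point lying on a rational curve, and degree is a birational invariant of $0$-cycles on a hyperkähler fourfold), so $\iota^\ast=\ide$ there. For the remaining statements the first step is a Hodge-theoretic computation. Write $H^2_{tr}(X)$ for the transcendental part of $H^2(X)$; since the exceptional class is algebraic one has $H^2_{tr}(X)\cong H^2_{tr}(S)$, which for very general $S$ is a \emph{simple} rational Hodge structure with $\operatorname{End}_{HS}(H^2_{tr}(S))=\QQ$. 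As $\iota$ is anti-symplectic, $\iota^\ast$ acts as $-1$ on the line $H^{2,0}(X)=\langle\sigma\rangle\subset H^2_{tr}(X)\otimes\C$; by simplicity $\iota^\ast=-\ide$ on all of $H^2_{tr}(X)$. Propagating through the Künneth decomposition of $H^\ast(X)$, where the relevant summands of $H^4(X)$ are built from $\operatorname{Sym}^2 H^2_{tr}$ and from $H^2_{tr}\otimes(\text{algebraic})$, we get that $\iota^\ast$ acts as $+1$ on the ``$\operatorname{Sym}^2$'' transcendental summand of $H^4$ and as $-1$ on the summand governing $A^4_{(2)}$.

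The heart of the matter is to lift these cohomological identities to rational equivalence, and I would do this for the codimension-$2$ statement by spreading. Consider the correspondence $\Gamma:=\Pi_2^X\circ(\Gamma_\iota+\Delta_X)\circ\Pi_2^X$, whose action on $A^2_{(2)}(X)$ is $(\Pi_2^X)_\ast\iota^\ast+\ide$ and which by the previous paragraph is homologically trivial on the transcendental motive. Let $B$ be the $19$-dimensional base parametrising degree-$10$ $K3$ surfaces, $\XX\to B$ the relative Hilbert scheme $\Ss^{[2]}$, and note that O'Grady's construction produces a relative birational involution, hence a relative correspondence restricting to $\Gamma$ on each fibre. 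Because this relative correspondence is fibrewise homologically trivial on the transcendental part, Voisin's spreading principle yields a decomposition of it, modulo cycles pulled back from $B$ and cycles supported over a proper closed subset, showing that its restriction to the very general fibre acts as $0$ on $A^2_{(2)}(X)$. This proves the third identity; the technical core is to check that the transcendental piece of $A^\ast$ of the total space behaves as expected and that the ``error'' contributions vanish on the transcendental part of the very general fibre.

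Granting the third identity, the second follows because the hard-Lefschetz isomorphism $A^2_{(2)}(X)\xrightarrow{\ \cong\ }A^4_{(2)}(X)$ is realized by cup-product with a canonical (hence $\iota$-invariant) class, so it is $\iota$-equivariant and transports $\iota^\ast=-\ide$ from $A^2_{(2)}$ to $A^4_{(2)}$. The fourth identity I would deduce from multiplicativity of the Shen--Vial decomposition: there is a surjection $A^2_{(2)}(X)\otimes A^2_{(2)}(X)\twoheadrightarrow A^4_{(4)}(X)$ given by intersection product, and $\iota^\ast$ is a morphism of the bigraded ring, so for $a,b\in A^2_{(2)}(X)$ one computes $\iota^\ast(a\cdot b)=\iota^\ast a\cdot\iota^\ast b=(-a)\cdot(-b)=a\cdot b$, giving $\iota^\ast=\ide$ on $A^4_{(4)}(X)$ after projecting by $\Pi_4^X$. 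The main obstacle is the spreading step, sharpened by the fact that $\iota$ is only \emph{birational}: one must control the indeterminacy locus of the relative involution and show that the cycles it contributes, together with the non-transcendental error terms from the spreading decomposition, act trivially on $A^2_{(2)}$ of the very general fibre — this is exactly where the very-general hypothesis (no extra Hodge classes, no extra algebraic cycles) is indispensable. A secondary subtlety, needed for the multiplicativity argument above, is verifying that $\iota^\ast$ genuinely respects the bigrading of the Shen--Vial decomposition even though it is induced by a birational rather than a regular self-map.
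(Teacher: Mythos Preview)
Your outline has the right overall architecture (spread the relative correspondence, then propagate via the multiplicative structure), but there are two genuine gaps that the paper has to work hard to fill, and which you have swept under the rug.

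\textbf{First gap: the $\iota$-invariance of the hard Lefschetz class $l$.} You write that the isomorphism $\cdot l\colon A^2_{(2)}(X)\xrightarrow{\cong}A^4_{(2)}(X)$ is realised by ``a canonical (hence $\iota$-invariant) class''. This inference is not valid: $\iota$ is only birational, so there is no a priori reason why $\iota^\ast c_2(X)=c_2(X)$ in $A^2(X)$, and $l$ is a multiple of $c_2(X)$. In the paper this is the content of a separate proposition: one first shows $\iota^\ast(l)=\pm l$ in \emph{cohomology} by exploiting the eigenspace decomposition of $H^4(X)$ under $(L^2)_\ast$ and a unicity statement for the Beauville--Bogomolov class, and then upgrades this to $A^2(X)$ by a second, independent spreading argument. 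Only after that does an argument by contradiction pin down the sign as $+$. Without this, neither your $A^4_{(2)}$ step nor your ``essentially formal'' $A^4_{(0)}$ step goes through: the paper proves $\iota^\ast=\ide$ on $A^4_{(0)}=\QQ[l^2]$ precisely via $\iota^\ast(l)=l$, not by the heuristic ``$\iota$ preserves $\oo_X$'' (which would already presuppose that $\iota^\ast$ respects the bigrading on $A^4$).

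\textbf{Second gap: you assume $\iota^\ast$ respects the bigrading.} In your $A^4_{(4)}$ computation you write $\iota^\ast a=-a$ for $a\in A^2_{(2)}(X)$. But the third identity only gives $(\Pi_2^X)_\ast\iota^\ast a=-a$, i.e.\ $\iota^\ast a=-a+r$ with $r\in A^2_{(0)}(X)\cap A^2_{hom}(X)$. The paper \emph{cannot} eliminate $r$; indeed it records as an open question whether $\iota^\ast$ preserves $A^\ast_{(\ast)}$. Instead one must carry the error terms $r_j$ through the product, using that for $a_1,a_2\in A^2_{AJ}(X)$ the identity $\iota^\ast(a_1\cdot a_2)=\iota^\ast(a_1)\cdot\iota^\ast(a_2)$ holds by a result of Shen--Vial valid for birational maps (this is not automatic, as you note), and then observe that the cross terms $r_j\cdot a_k$ land in $A^4_{(2)}$, so they disappear after applying $(\Pi_4^X)_\ast$. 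Your last sentence flags the bigrading issue as a ``secondary subtlety'', but it is not: it is something the paper explicitly fails to establish and must circumvent.

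\textbf{A smaller point on the spreading step.} Your description of the spreading argument omits the mechanism that makes it work in degree $10$: the Mukai model realises $S$ as a complete intersection in $G(2,5)$, and this is what gives the vanishing $A^2_{hom}(\Ss\times_B\Ss)=0$ on the total space. The paper transfers the relative correspondence from $\XX\times_B\XX$ to $\Ss^{4/B}$ via the correspondence $\Psi$, factors through $A^2(\Ss\times_B\Ss)$ using explicit relative projectors, and only then applies the vanishing. ``Voisin's spreading principle'' alone does not give rational triviality on the fibre; one needs this specific vanishing input, which is where $d=10$ enters.
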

 
 The birational involution $\iota$ of \cite{OG2} is briefly explained in proposition \ref{og1} below. 
 The notation $A^\ast_{(\ast)}(X)$ in theorem \ref{main} refers to the bigraded ring structure constructed unconditionally for all Hilbert squares of $K3$ surfaces by Shen--Vial, using their version of the Fourier transform on the Chow ring \cite{SV} (cf. also section \ref{secmck} below). The $\Pi_j^X$ refer to the Chow--K\"unneth decomposition of \cite{SV}; by construction, these have the property that $(\Pi_j^X)_\ast A^i(X)=A^i_{(2i-j)}(X)$.

 It is known that a variety $X$ as in theorem \ref{main} has a birational model $X^\prime$ which is a hyperk\"ahler variety; $X^\prime$ is a so--called {\em double EPW sextic\/} \cite{OG}, \cite{OG2}, \cite{OG3}. The variety $X^\prime$ has a generically $2:1$ morphism to a slightly singular sextic hypersurface $Y\subset\PP^5$, called an {\em EPW sextic \/}\cite{EPW}, \cite{OG}. Theorem \ref{main} has interesting consequences for the Chow ring of this EPW sextic:
   
  \begin{nonumberingc}[=corollary \ref{epw}] Let $X$ be as in theorem \ref{main}, and let 
  $Y\subset\PP^5$ be the associated EPW sextic.
  For any $r\in\NN$, let
  \[ E^\ast(Y^r)\ \subset\ A^\ast(Y^r) \]
  be the subring generated by (pullbacks of) $A^1(Y)$ and $A^2(Y)$. The cycle class map 
  \[   E^k(Y^r)\ \to\ \gr^W_{2k} H^{2k}(Y^r) \]
  is injective for $k\ge 4r-1$.
  \end{nonumberingc}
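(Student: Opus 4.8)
The plan is to pass from $Y$ to the smooth double cover $X^\prime$ and to reduce the whole statement to the injectivity of the cycle class map on the lowest piece $A^*_{(0)}$ of a single factor, in the top two codimensions. Since $\pi\colon X^\prime\to Y$ is the quotient by the anti-symplectic involution $\iota$, with $\QQ$-coefficients $\pi^*$ is injective with image the invariants, so $A^*(Y^r)\cong A^*((X^\prime)^r)^G$ for $G=\langle\iota\rangle^{\times r}$, and likewise $\gr^W_{2k}H^{2k}(Y^r)\cong (H^{2k}((X^\prime)^r))^G$ (the right-hand side being pure, as $(X^\prime)^r$ is smooth projective), compatibly with cycle classes. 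Transporting theorem \ref{main} to $X^\prime$ along the birational equivalence $X\dashrightarrow X^\prime$, the key consequence is that $\iota$ acts by $-\ide$ on $A^2_{(2)}(X^\prime)$; hence $A^1(Y)\subset A^1_{(0)}(X^\prime)$ and $A^2(Y)=A^2(X^\prime)^\iota\subset A^2_{(0)}(X^\prime)$. As the Shen--Vial bigrading is multiplicative and is respected by the projections $(X^\prime)^r\to X^\prime$, every generator of $E^*(Y^r)$ pulls back into $A^*_{(0)}((X^\prime)^r)$.

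Next I would observe that any monomial in $E^k(Y^r)$, being a product of pull-backs along the projections, regroups into an external product $\gamma_1\times\cdots\times\gamma_r$ with $\gamma_i\in E^{d_i}(Y)\subset A^{d_i}_{(0)}(X^\prime)$ and $\sum_i d_i=k$. When $k\ge 4r-1$ and $0\le d_i\le\dim Y=4$, we have $\sum_i(4-d_i)\le 1$, so in every such monomial all factors lie in $E^4(Y)$ except at most one factor, which lies in $E^3(Y)$. Grouping an arbitrary $\xi\in E^k(Y^r)$ by multidegree gives $\xi=\sum_{(d_i)}\eta_{(d_i)}$, where $\eta_{(d_i)}$ is a sum of external products of multidegree $(d_i)$, and the classes $\text{cl}(\eta_{(d_i)})$ lie in pairwise distinct K\"unneth summands of $(H^{2k}((X^\prime)^r))^G$. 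Hence $\text{cl}(\xi)=0$ forces $\text{cl}(\eta_{(d_i)})=0$ for every multidegree.

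Writing $\eta_{(d_i)}=m(t)$ for some $t\in\bigotimes_i E^{d_i}(Y)$, where $m$ denotes the external product map, the commuting square with the injective K\"unneth embedding shows that $\text{cl}(\eta_{(d_i)})=0$ implies $(\bigotimes_i\text{cl})(t)=0$; as a tensor product of injective $\QQ$-linear maps is injective, it suffices to prove that $\text{cl}\colon E^{d}(Y)\to H^{2d}(X^\prime)$ is injective for $d\in\{3,4\}$, whence $t=0$, so $\eta_{(d_i)}=m(t)=0$ and finally $\xi=0$. For $d=3,4$ the generators land in $A^3_{(0)}(X^\prime)$ and $A^4_{(0)}(X^\prime)$ by multiplicativity; in particular $A^2(Y)\cdot A^2(Y)\subset A^4_{(0)}(X^\prime)=\QQ$, so the control of $A^2(Y)$ obtained in the first paragraph is exactly what forbids a nonzero homologically trivial $0$-cycle assembled from $2$-classes. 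Since the cycle class map is injective on $A^i_{(0)}(X^\prime)$ \cite{SV}, both $E^3(Y)$ and $E^4(Y)$ inject, and the corollary follows.

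The multidegree/K\"unneth bookkeeping is routine; the two places needing genuine care both sit in the first paragraph. One must check that the birational equivalence $X\dashrightarrow X^\prime$ transports the Shen--Vial decomposition and the $\iota$-action of theorem \ref{main} faithfully, so that $\iota^*$ respects the bigrading on $X^\prime$ and acts by $-\ide$ on $A^2_{(2)}(X^\prime)$ (this is what forces $A^2(Y)\subset A^2_{(0)}$; without it, products of $2$-classes would land in $A^4_{(4)}$ and the $0$-cycle statement would fail), and one must make the comparison $\gr^W_{2k}H^{2k}(Y^r)\cong (H^{2k}((X^\prime)^r))^G$ precise despite the singularities of $Y$ and compatible with cycle classes. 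I expect the former, the compatibility of $\iota$ with the bigrading across the birational map, to be the main obstacle.
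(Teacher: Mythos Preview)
Your overall architecture---pull back to $X^\prime$, show that generators land in $A^\ast_{(0)}$, then invoke injectivity of the cycle class map there---is exactly the paper's. The multidegree/K\"unneth reduction you do in the second and third paragraphs is a legitimate (and slightly more hands--on) substitute for the paper's direct citation of \cite[Introduction]{V6} for the injectivity of $A^k_{(0)}((X^\prime)^r)\to H^{2k}((X^\prime)^r)$ when $k\ge 4r-1$. That part is fine.

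The genuine gap is in the first paragraph, and you have in fact put your finger on it in the last sentence. You assert that theorem~\ref{main} transports to ``$\iota$ acts by $-\ide$ on $A^2_{(2)}(X^\prime)$'' and, implicitly, that $\iota^\ast$ respects the bigrading. Neither is available. Theorem~\ref{main} (and its birational version, corollary~\ref{birat}) only say $(\Pi_2^{X^\prime})_\ast(\iota^\prime)^\ast=-\ide$ on $A^2_{(2)}(X^\prime)$, i.e.\ for $a\in A^2_{(2)}(X^\prime)$ one has $(\iota^\prime)^\ast(a)=-a+r$ with $r\in A^2_{(0)}(X^\prime)$. Whether $(\iota^\prime)^\ast$ preserves $A^2_{(0)}(X^\prime)$ is \emph{not} known; it is precisely the open question recorded in remark~\ref{rem} and in the first question of section~\ref{finsec}. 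Without it, your implication ``$b\in A^2(X^\prime)^{\iota^\prime}\Rightarrow b\in A^2_{(0)}(X^\prime)$'' does not follow: writing $b=c_0+c_2$, invariance gives $(\iota^\prime)^\ast(c_0)=c_0+2c_2-r$, and nothing forces $c_2=0$ unless you already know $(\iota^\prime)^\ast(c_0)\in A^2_{(0)}$. The paper does \emph{not} prove compatibility with the bigrading; instead it proves the needed special case directly (lemma~\ref{preserve}): for the specific $c_0$ arising as the $(0)$--part of some $p^\ast(b)$, one shows $(\iota^\prime)^\ast(c_0)\in A^2_{(0)}(X^\prime)$ by multiplying with the distinguished class $l^\prime$, using proposition~\ref{onl} ($(\iota^\prime)^\ast l^\prime=l^\prime$), the $A^4_{(0)}$ statement of theorem~\ref{main}, and the hard Lefschetz isomorphism $\cdot\, l^\prime\colon A^2_{(2)}\xrightarrow{\cong}A^4_{(2)}$ from theorem~\ref{mult}. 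This is the missing idea in your argument.

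A secondary but related point: $\iota^\prime$ is only a \emph{birational} involution of $X^\prime$ (the honest automorphism $\iota_A$ lives on the singular double cover $X_A$, and $X^\prime=X_A^\epsilon\to X_A$ is a small resolution; see theorem~\ref{og2}). So the identifications ``$A^\ast(Y^r)\cong A^\ast((X^\prime)^r)^G$'' and ``$A^2(Y)=A^2(X^\prime)^\iota$'' are not available as stated. The paper handles this by working through the factorisation $p\colon X^\prime\xrightarrow{s}X_A\xrightarrow{q}Y$, using $q^\ast A^i(Y)\subset A^i(X_A)^{\iota_A}$ and $s^\ast(A^i(X_A)^{\iota_A})\subset A^i(X^\prime)^{\iota^\prime}$; the compatibility needed for sublemma~\ref{comiota} (that $(\iota^\prime)^\ast$ is multiplicative on the relevant classes) is again obtained by pushing down to $X_A$ where $\iota_A$ is a genuine automorphism. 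You should route your argument through $X_A$ in the same way rather than treating $X^\prime\to Y$ as a finite group quotient.
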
                        

(Here, the Chow ring $A^\ast(Y^r)$ is taken to mean the operational Chow cohomology of Fulton--MacPherson \cite{F}.)

In particular, taking $r=1$, we find that the subspaces
  \[   \begin{split} &\ima \Bigl(  A^2(Y)\otimes A^1(Y)\ \to\ A^3(Y)\Bigr)\ ,\\
                             &\ima \Bigl(  A^2(Y)\otimes A^2(Y)\ \to\ A^4(Y)\Bigr)\ \\
                             \end{split}\]
  are of dimension $1$ (corollary \ref{epw1}). This is analogous to known results for $0$--cycles on $K3$ surfaces \cite{BV} and on certain Calabi--Yau varieties \cite{V13}, \cite{LFu} (cf. remark \ref{compare} below).                      
                             
Theorem \ref{main} is proven using the technique of ``spread'' of algebraic cycles in a family, as developed by Voisin in her seminal work on the Bloch/Hodge equivalence for complete intersections \cite{V0}, \cite{V1}, \cite{V8}, \cite{Vo}. 

In a final section (section \ref{finsec}), some questions related to theorem \ref{main} are stated, which we hope may spurn further research.

 \vskip0.6cm

\begin{convention} In this article, the word {\sl variety\/} will refer to a reduced irreducible scheme of finite type over $\C$. A {\sl subvariety\/} is a (possibly reducible) reduced subscheme which is equidimensional. 

{\bf All Chow groups will be with rational coefficients}: we will denote by $A_j(X)$ the Chow group of $j$--dimensional cycles on $X$ with $\QQ$--coefficients; for $X$ smooth of dimension $n$ the notations $A_j(X)$ and $A^{n-j}(X)$ are used interchangeably. 

The notations $A^j_{hom}(X)$, $A^j_{AJ}(X)$ will be used to indicate the subgroups of homologically trivial, resp. Abel--Jacobi trivial cycles.
For a morphism $f\colon X\to Y$, we will write $\Gamma_f\in A_\ast(X\times Y)$ for the graph of $f$.
The contravariant category of Chow motives (i.e., pure motives with respect to rational equivalence as in \cite{Sc}, \cite{MNP}) will be denoted $\MM_{\rm rat}$.

If $\tau\colon Y\to X$ is an inclusion of smooth varieties and $b\in A^j(X)$, we will often write
  \[   b\vert_{Y}\ \ \in A^j(Y)\]
  to indicate the class $\tau^\ast(b)$.


We use $H^j(X)$ 
to indicate singular cohomology $H^j(X,\QQ)$.

We write $\aut(X)$ and $\bir(X)$ to denote the group of automorphisms, resp. of birational automorphisms, of $X$.

Given an involution $\iota\in\bir(X)$, we will write $A^j(X)^\iota$ (and $H^j(X)^\iota$) for the subgroup of $A^j(X)$ (resp. $H^j(X)$) invariant under $\iota$.
\end{convention}

\section{Preliminaries}


  
   


\subsection{MCK decomposition}
\label{ssmck}

\begin{definition}[Murre \cite{Mur}] Let $X$ be a smooth projective variety of dimension $n$. We say that $X$ has a {\em CK decomposition\/} if there exists a decomposition of the diagonal
   \[ \Delta_X= \Pi_0^X+ \Pi_1^X+\cdots +\Pi_{2n}^X\ \ \ \hbox{in}\ A^n(X\times X)\ ,\]
  such that the $\Pi_i^X$ are mutually orthogonal idempotents and $(\Pi_i^X)_\ast H^\ast(X)= H^i(X)$.
  
  (NB: ``CK decomposition'' is shorthand for ``Chow--K\"unneth decomposition''.)
\end{definition}

\begin{remark} The existence of a CK decomposition for any smooth projective variety is part of Murre's conjectures \cite{Mur}, \cite{J2}. 
\end{remark}

\begin{definition}[Shen--Vial \cite{SV}] Let $X$ be a smooth projective variety of dimension $n$. Let $\Delta^X_{sm}\in A^{2n}(X\times X\times X)$ be the class of the small diagonal
  \[ \Delta^X_{sm}:=\bigl\{ (x,x,x)\ \vert\ x\in X\bigr\}\ \subset\ X\times X\times X\ .\]
  An MCK decomposition is a CK decomposition $\{\Pi_i^X\}$ of $X$ that is {\em multiplicative\/}, i.e. it satisfies
  \[ \Pi_k^X\circ \Delta^X_{sm}\circ (\Pi_i^X\times \Pi_j^X)=0\ \ \ \hbox{in}\ A^{2n}(X\times X\times X)\ \ \ \hbox{for\ all\ }i+j\not=k\ .\]
  
 (NB: ``MCK decomposition'' is shorthand for ``multiplicative Chow--K\"unneth decomposition''.) 
  \end{definition}
  
  \begin{remark} The small diagonal (seen as a correspondence from $X\times X$ to $X$) induces the {\em multiplication morphism\/}
    \[ \Delta^X_{sm}\colon\ \  h(X)\otimes h(X)\ \to\ h(X)\ \ \ \hbox{in}\ \MM_{\rm rat}\ .\]
 Suppose $X$ has a CK decomposition
  \[ h(X)=\bigoplus_{i=0}^{2n} h^i(X)\ \ \ \hbox{in}\ \MM_{\rm rat}\ .\]
  By definition, this decomposition is multiplicative if for any $i,j$ the composition
  \[ h^i(X)\otimes h^j(X)\ \to\ h(X)\otimes h(X)\ \xrightarrow{\Delta^X_{sm}}\ h(X)\ \ \ \hbox{in}\ \MM_{\rm rat}\]
  factors through $h^{i+j}(X)$.
  It follows that if $X$ has an MCK decomposition, then setting
    \[ A^i_{(j)}(X):= (\Pi^X_{2i-j})_\ast A^i(X) \ ,\]
    one obtains a bigraded ring structure on the Chow ring: that is, the intersection product sends 
    $A^i_{(j)}(X)\otimes A^{i^\prime}_{(j^\prime)}(X) $ to  $A^{i+i^\prime}_{(j+j^\prime)}(X)$.
      
  The property of having an MCK decomposition is severely restrictive, and is closely related to Beauville's ``weak splitting property'' \cite{Beau3}. For more ample discussion, and examples of varieties with an MCK decomposition, we refer to \cite[Section 8]{SV} and also \cite{V6}, \cite{SV2}, \cite{FTV}, \cite{LV}.
    \end{remark}

  \begin{lemma}[Vial \cite{V6}]\label{hk} Let $X, X^\prime$ be birational hyperk\"ahler varieties. Then $X$ has an MCK decomposition if and only if $X^\prime$ has one.
  \end{lemma}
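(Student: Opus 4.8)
The plan is to transport an MCK decomposition from $X$ to $X^\prime$ along the isomorphism of Chow motives induced by the birational map, so that the statement splits into two ingredients: (i) a birational map $\phi\colon X\dashrightarrow X^\prime$ of hyperk\"ahler varieties induces an isomorphism $h(X)\cong h(X^\prime)$ in $\MM_{\rm rat}$ which is compatible with the multiplication morphisms (i.e.\ an isomorphism of algebra objects), and (ii) a purely formal computation showing that such an isomorphism preserves the MCK property. Since the situation is symmetric under replacing $\phi$ by $\phi^{-1}$, it is enough to prove one implication, say that an MCK decomposition of $X$ induces one on $X^\prime$.

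For (i), I would let $\Gamma:=[\overline{\Gamma_\phi}]\in A^n(X\times X^\prime)$ denote the class of the closure of the graph of $\phi$ (with $n=\dim X=\dim X^\prime$), and ${}^t\Gamma\in A^n(X^\prime\times X)$ its transpose, which is the graph closure of $\phi^{-1}$. The key input --- due to Huybrechts (a birational map of hyperk\"ahler varieties is an isomorphism in codimension $1$, and these varieties are deformation equivalent with isomorphic Hodge structures realised by $\Gamma$) and to Rie\ss{} (the graph closure is compatible with the ring structure on Chow groups) --- is that these correspondences satisfy
\[ {}^t\Gamma\circ\Gamma=\Delta_X\ ,\qquad \Gamma\circ {}^t\Gamma=\Delta_{X^\prime}\ \ \ \hbox{in}\ A^n\ ,\]
that $\Gamma_\ast$ is a degree-preserving isomorphism on $H^\ast$, and, crucially, that $\Gamma$ is compatible with the small diagonals:
\[ \Delta^{X^\prime}_{sm}=\Gamma\circ\Delta^X_{sm}\circ({}^t\Gamma\times {}^t\Gamma)\ \ \ \hbox{in}\ A^{2n}(X^\prime\times X^\prime\times X^\prime)\ .\]
Equivalently, $\Gamma$ is an isomorphism $h(X)\xrightarrow{\cong}h(X^\prime)$ of algebra objects in $\MM_{\rm rat}$.

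Granting (i), the computation in (ii) is formal. Given an MCK decomposition $\{\Pi_i^X\}$ of $X$, set $\Pi_i^{X^\prime}:=\Gamma\circ\Pi_i^X\circ {}^t\Gamma$. From ${}^t\Gamma\circ\Gamma=\Delta_X$ one reads off that the $\Pi_i^{X^\prime}$ are mutually orthogonal idempotents; they sum to $\Gamma\circ\Delta_X\circ {}^t\Gamma=\Gamma\circ {}^t\Gamma=\Delta_{X^\prime}$; and since ${}^t\Gamma_\ast=(\Gamma_\ast)^{-1}$ is degree-preserving, $(\Pi_i^{X^\prime})_\ast H^\ast(X^\prime)=\Gamma_\ast(\Pi_i^X)_\ast H^\ast(X)=\Gamma_\ast H^i(X)=H^i(X^\prime)$, so $\{\Pi_i^{X^\prime}\}$ is a CK decomposition. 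For multiplicativity I would substitute the definitions into $\Pi_k^{X^\prime}\circ\Delta^{X^\prime}_{sm}\circ(\Pi_i^{X^\prime}\times\Pi_j^{X^\prime})$ and use the identities $({}^t\Gamma\times {}^t\Gamma)\circ(\Gamma\times\Gamma)=\Delta_{X\times X}$ and ${}^t\Gamma\circ\Gamma=\Delta_X$ to cancel all interior occurrences of $\Gamma$, reducing the expression to
\[ \Gamma\circ\bigl(\Pi_k^X\circ\Delta^X_{sm}\circ(\Pi_i^X\times\Pi_j^X)\bigr)\circ({}^t\Gamma\times {}^t\Gamma)\ ,\]
which vanishes whenever $i+j\neq k$ precisely because $\{\Pi_i^X\}$ is multiplicative. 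Thus $\{\Pi_i^{X^\prime}\}$ is an MCK decomposition.

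The main obstacle is entirely concentrated in ingredient (i), and more precisely in the compatibility of $\Gamma$ with the small diagonal: this is the genuinely geometric statement, and it rests on Rie\ss{}'s analysis of the birational map (resolving $\phi$ through Mukai-type flops and controlling the ring structure across the exceptional loci). The relations ${}^t\Gamma\circ\Gamma=\Delta_X$ and the cohomological assertions are comparatively accessible consequences of $\phi$ being an isomorphism in codimension $1$. Once (i) is available, everything above is formal and, being symmetric in $X$ and $X^\prime$, yields the desired equivalence.
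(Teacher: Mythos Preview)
Your proposal is correct and follows essentially the same route as the paper: both rely on Rie\ss's result \cite{Rie} that a birational map of hyperk\"ahler varieties induces an isomorphism of Chow motives compatible with the multiplication morphisms, after which your formal transport in~(ii) of the MCK decomposition is exactly the computation implicit in the paper's one--line reference to \cite{V6}. One small caveat: the correspondence furnished by Rie\ss{} is the specialization cycle arising from Huybrechts' deformation, which agrees with the naive graph closure only up to terms supported on the exceptional locus, so the identities ${}^t\Gamma\circ\Gamma=\Delta_X$ and $\Delta^{X^\prime}_{sm}=\Gamma\circ\Delta^X_{sm}\circ({}^t\Gamma\times {}^t\Gamma)$ are guaranteed for that specialization correspondence rather than literally for $[\overline{\Gamma_\phi}]$ --- but this does not affect your argument, since all you need is \emph{some} $\Gamma$ realising an algebra isomorphism $h(X)\cong h(X^\prime)$.
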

  
  \begin{proof} This is noted in \cite[Introduction]{V6}; the idea (as indicated in loc. cit.) is that Rie\ss's result \cite{Rie} implies that $X$ and $X^\prime$ have isomorphic Chow motives and the isomorphism is compatible with the multiplicative structure. 

          \end{proof}       

\subsection{MCK for $K3^{[2]}$}
\label{secmck}

\begin{theorem}[Shen--Vial \cite{SV}]\label{mck} Let $S$ be a $K3$ surface, and $X=S^{[2]}$. There exists an MCK decomposition $\{ \Pi_j^X\}$ for $X$. In particular,
setting
  \[ A^i_{(j)}(X):= (\Pi^X_{2i-j})_\ast A^i(X)\ \]
  defines a bigraded ring structure $A^\ast_{(\ast)}(X)$ on $A^\ast(X)$. Moreover, $A^\ast_{(\ast)}(X)$ coincides with the bigrading on $A^\ast(X)$ defined by the Fourier transform.
\end{theorem}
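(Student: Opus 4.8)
The plan is to follow the Fourier-theoretic strategy: construct an explicit kernel on $X\times X$, use it to split the Chow groups, assemble the resulting projectors into a Chow--K\"unneth decomposition, and finally verify multiplicativity by reducing all computations to the well-understood Chow ring of $S\times S$. Since the projectors will be \emph{defined} from the Fourier kernel, the last assertion of the theorem -- that $A^\ast_{(\ast)}(X)$ coincides with the Fourier bigrading -- will be automatic, and the real content is the existence and multiplicativity of the decomposition.

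First I would introduce the Beauville--Bogomolov class $L\in A^2(X\times X)$, the codimension--$2$ cycle whose cohomology class corresponds to the Beauville--Bogomolov quadratic form under the K\"unneth decomposition of $H^4(X\times X)$. For $X=S^{[2]}$ this class can be written down concretely via the Hilbert--Chow description: writing $\rho\colon \wt{S\times S}\to X$ for the natural degree--$2$ quotient map from the blow--up of $S\times S$ along its diagonal, one pulls $L$ back to obtain an expression involving the diagonal of $S$, the exceptional divisor, and the Beauville--Voisin canonical class $\oo_S\in A^2(S)$. Having $L$ available on the more tractable space $S\times S$ is exactly what makes all subsequent intersection computations feasible. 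With $L$ in hand I would define the Fourier transform $\mathcal{F}(\alpha):=(p_2)_\ast\bigl(e^{L}\cdot p_1^\ast\alpha\bigr)$ and establish the algebraic relations satisfied by the powers $L,L^2,L^3$ on $X\times X$. The crucial relations express $L^2$ and $L^3$ in terms of $L$ multiplied by (pullbacks of) a fixed tautological class $\ell\in A^2(X)$, together with the diagonal and products of point classes; these are precisely the relations forcing $\mathcal{F}^2$ to be, up to an explicit involution and known operators, the identity, and thereby yielding a candidate bigrading $A^i(X)=\bigoplus_j A^i_{(j)}(X)$.

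The projectors $\Pi^X_{2i-j}$ onto these pieces are then built out of $L$, its powers, and the class $\ell$. Checking that they are mutually orthogonal idempotents projecting onto the K\"unneth components in $H^\ast(X)$ is a lengthy but essentially formal consequence of the relations for $L$, carried out by repeated substitution; the main bookkeeping is to keep track of the self--intersection contributions of the exceptional divisor of $\rho$ when squaring kernels.

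The hard part, and the crux of the theorem, is the multiplicativity: one must show $\Pi^X_k\circ \Delta^X_{sm}\circ(\Pi^X_i\times\Pi^X_j)=0$ for $i+j\neq k$, i.e. that the small diagonal $\Delta^X_{sm}\in A^4(X\times X\times X)$ decomposes along the $L$--grading. I would attack this by expressing $\Delta^X_{sm}$ explicitly as a polynomial in the various $L$--kernels and the class $\ell$, and then pulling the whole identity back along $\rho\times\rho\times\rho$ to the blow--up model $\wt{S\times S}$, finally pushing down to $S\times S$, where the statement becomes a computation inside the Beauville--Voisin ring of $S$. The decisive input is that $A^\ast(S\times S)$ already carries a multiplicative bigrading inherited from the Beauville--Voisin splitting of $A^\ast(S)$ (products of divisors and the canonical class $\oo_S$ controlling the degree--$0$ part), so the task reduces to verifying that the birational/quotient passage from $S\times S$ to $S^{[2]}$ respects that multiplicative structure. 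I expect this last compatibility -- controlling the contributions of the exceptional divisor and of the $\ZZ/2$--action under the intersection product -- to be the main obstacle, requiring careful use of the excess--intersection and self--intersection formulas for the exceptional divisor together with systematic application of the defining relations for $L$.
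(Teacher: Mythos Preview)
The paper does not actually prove this statement: its proof consists of two citations, to \cite[Theorem~13.4]{SV} for the existence of the MCK decomposition and to \cite[Theorem~15.8]{SV} for the identification with the Fourier bigrading. So the relevant comparison is between your sketch and what Shen--Vial themselves do.

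Your outline is a reasonable reconstruction of the Fourier--theoretic side of \cite{SV}, but it is organized differently from how the MCK decomposition is in fact produced there (and as the present paper makes explicit in Proposition~\ref{relmck}). In \cite[Theorem~13.4]{SV} the projectors $\Pi_j^X$ are \emph{not} built directly out of the kernel $L$. Instead one starts from the trivial MCK decomposition of $S$ (using the Beauville--Voisin $0$--cycle $\oo_S$), takes the product MCK on $S^2$, passes to the blow--up $\wt{S^2}$ of the diagonal via a general ``MCK is stable under blow--up along an MCK centre'' statement, and finally descends to $X=S^{[2]}$ via the $\Sy_2$--quotient. Multiplicativity is inherited at each step from general stability lemmas, so one never has to attack $\Delta^X_{sm}$ head--on. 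The agreement of the resulting bigrading with the Fourier one is then a separate comparison, \cite[Theorem~15.8]{SV}.

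Your proposed route --- define the CK projectors from $L$ and its powers, then verify multiplicativity by an explicit computation of $\Delta^X_{sm}$ pulled back to $S\times S$ --- is in principle viable, and your instinct to reduce everything to the Beauville--Voisin ring of $S$ is exactly right. The trade--off is that you front--load all of the hard intersection theory (controlling the exceptional divisor contributions in the triple product), whereas the structural approach packages multiplicativity into reusable stability results and then only at the end matches the outcome with the Fourier picture. Note also that in your organisation the ``moreover'' clause is not quite automatic: you would still need to check that the eigenspace decomposition of $A^i(X)$ under the operators $(L^k)_\ast$ agrees with the images of your projectors, which is precisely the content of \cite[Theorem~15.8]{SV}.
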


\begin{proof} The existence of $\{ \Pi_j^X\}$ is a special case of \cite[Theorem 13.4]{SV}. The ``moreover'' part is \cite[Theorem 15.8]{SV}.
\end{proof}

\begin{remark} The first statement of theorem \ref{mck} actually holds for $X=S^{[r]}$ for any $r\in\NN$ \cite{V6}.
\end{remark}

Any $K3$ surface $S$ has an MCK decomposition \cite[Example 8.17]{SV}. Since this property is stable under products \cite[Theorem 8.6]{SV}, $S^2$ also has an MCK decomposition. The following lemma records a basic compatibility between the bigradings on $A^\ast(S^{[2]})$ and on $A^\ast(S^2)$:

\begin{lemma}\label{compat} Let $S$ be a $K3$ surface, and $X=S^{[2]}$. Let $\Psi\in A^4(X\times S^2)$ be the correspondence coming from the diagram
  \[ \begin{array}[c]{ccc}
          S^{[2]} & \xleftarrow{} &\wt{S^2}\\
         {\scriptstyle h} \downarrow\ \ \  && \downarrow\\
          S^{(2)} & \xleftarrow{g}& S^2\\
          \end{array}\]
       (the arrow labelled $h$ is the Hilbert--Chow morphism; the right vertical arrow is the blow--up of the diagonal). Then
     \[  \begin{split} &(\Psi)_\ast R(X)\ \subset\ R(S^2)\ ,\\     
                            &({}^t\Psi)_\ast R(S^2)\ \subset\ R(X)\ ,\\
                            \end{split}\]
                    where $R=A^4_{(4)}$ or $A^4_{(2)}$ or $A^2_{(2)}$ or $A^2_{(0)}\cap A^2_{hom}$.        
\end{lemma}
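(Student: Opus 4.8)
The plan is to prove the stated compatibility by understanding how the correspondence $\Psi$ interacts with the Fourier-theoretic bigradings on both sides, and then checking the four cases $R=A^4_{(4)}, A^4_{(2)}, A^2_{(2)}, A^2_{(0)}\cap A^2_{hom}$ individually. The key structural input is that $\Psi$ is built from the Hilbert--Chow morphism $h$ together with the blow-up $\wt{S^2}\to S^2$ of the diagonal, so $\Psi$ should be thought of as essentially the ``symmetrization/desingularization'' correspondence relating $A^\ast(S^{[2]})$ to $A^\ast(S^2)$. First I would make the relationship between the two bigradings explicit: Shen--Vial's Fourier decomposition on $A^\ast(S^{[2]})$ is compatible (by construction) with the product MCK decomposition on $A^\ast(S^2)$ via precisely this kind of correspondence, so one expects $\Psi$ to send the piece $A^i_{(j)}(X)$ into a corresponding graded piece of $A^\ast(S^2)$, and likewise for the transpose.

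The main step is to show that $\Psi$ and ${}^t\Psi$ respect the grading by $(j)$. I would do this by reducing to the case of the symmetric square $S^{(2)}$, where the bigrading is induced from $S^2$ by the $\Sy_2$-action, and then handling the discrepancy introduced by the Hilbert--Chow morphism, i.e. the exceptional divisor over the diagonal. Concretely, one has a decomposition of $h^\ast h_\ast$ (or its Chow-motivic analogue) into a ``main term'' coming from the symmetric square plus a correction supported on the exceptional locus; the correction term is controlled because the blow-up center is the diagonal $S\hookrightarrow S^2$, whose motive is that of $S$ (shifted), and one knows how $A^\ast_{(\ast)}(S)$ sits inside $A^\ast_{(\ast)}(S^{[2]})$. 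For each of the four choices of $R$ I would verify that both the main term and the exceptional correction land in the prescribed piece; the homological triviality condition in the last case $A^2_{(0)}\cap A^2_{hom}$ is automatically preserved since $\Psi$ and ${}^t\Psi$ act on cohomology compatibly with the Künneth/weight grading.

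The hard part will be tracking the exceptional divisor contribution precisely, especially for the $A^2$-pieces where the correction supported on the diagonal could a priori leak into the wrong grading. I would handle this by invoking the explicit formulas of \cite{SV} for the Fourier decomposition of $A^\ast(S^{[2]})$ in terms of the incidence correspondences and the class of the exceptional divisor, so that the behavior of $\Psi$ on each piece becomes a bookkeeping computation rather than a conceptual difficulty. In particular, the pieces $A^4_{(4)}$ and $A^2_{(2)}$ are ``primitive'' in the sense of being killed by intersection with the exceptional/boundary classes in a controlled way, which should make the correction terms vanish or land correctly; the remaining pieces $A^4_{(0)}$ and $A^4_{(2)}$ are controlled by Lefschetz-type relations to the lower-codimension pieces.

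I expect the genuine obstacle to be the last one: ensuring that $\Psi$ does not mix the grading index $(j)$ across the blow-up, i.e. that the exceptional correction is itself of pure degree $(j)$. This is where I would lean most heavily on the fact—guaranteed by Theorem \ref{mck}—that the Shen--Vial bigrading \emph{coincides} with the Fourier bigrading, since the Fourier transform is manifestly natural with respect to the geometry of $S^{[2]}$ and hence with respect to $\Psi$.
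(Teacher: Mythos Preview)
Your plan correctly identifies the geometry (the Hilbert--Chow/blow--up diagram) and correctly anticipates that the exceptional divisor contribution is the crux. But the strategy you outline --- showing that the exceptional correction itself \emph{lands in the correct graded piece} --- is precisely what the paper is unable to do. In the remark immediately following the lemma, the author admits that the statement ``$\Psi$ is of pure grade $0$'' (i.e.\ respects the full bigrading) is only conjectural and that he has ``not been able to prove this''. So your proposed route, taken at face value, aims at something stronger than what is actually established.

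The paper's argument circumvents this obstacle by a different mechanism. Rather than decomposing $\Psi$ into a main term plus an exceptional correction and tracking both, the paper uses that by \emph{construction} one has
\[
\Pi_k^X \;=\; \tfrac{1}{2}\, {}^t\Psi \circ \Pi_k^{S^2}\circ \Psi \;+\; \hbox{Rest},
\]
where ``Rest'' is supported on the diagonal $\Delta_S\subset S^2$ and \emph{acts as zero on $A^4(X)$ and on $A^2_{AJ}(X)$}. One then proves a small commutativity sublemma: $\Psi\circ {}^t\Psi$ commutes with $\Pi_k^{S^2}$, which follows from $(\Psi\circ {}^t\Psi)_\ast = 2(\Delta_{S^2}+\Gamma_\tau)_\ast$ and the $\Sy_2$--symmetry of the product MCK decomposition. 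Combining these with $\tfrac{1}{2}{}^t\Psi\circ\Psi=\ide$ on $A^2_{hom}(X)$ yields the inclusion $({}^t\Psi)_\ast(\Pi_k^{S^2})_\ast A^2_{hom}(S^2)\subset (\Pi_k^X)_\ast A^2_{hom}(X)$, which gives the two $A^2$ cases directly; the $A^4$ cases are said to be similar. The point is that one never needs to know \emph{where} the exceptional term lands, only that it does not act on the specific pieces $R$ listed in the lemma.

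In short: your plan is reasonable as a heuristic, but the step ``the exceptional correction lands in the right graded piece'' is a gap you would not be able to close with current technology. Replace it by the observation that the correction term simply vanishes on the relevant Chow groups, and use the commutativity of $\Psi\circ{}^t\Psi$ with the product projectors.
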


\begin{proof} We prove the statement for ${}^t \Psi$ and $ R= A^2_{(2)}$ or $A^2_{(0)}\cap A^2_{hom}$, which are the only cases we'll be using (the other statements can be proven similarly). By construction of the MCK decomposition for $X$, there is a relation
  \begin{equation}\label{XS}  \Pi_k^X= {1\over 2}\ {}^t \Psi\circ \Pi_k^{S^2}\circ \Psi + \hbox{Rest}\ \ \ \hbox{in}\ A^4(X\times X)\ , \ \ \ (k=0,2,4,6,8)\ ,\end{equation}
  where $\{\Pi_k^{S^2}\}$ is a product MCK decomposition for $S^2$, and  ``Rest'' is a term coming from $\Delta_S\subset S\times S$ which does not act on $A^4(X)$ and on $A^2_{AJ}(X)$.
Since ${1\over 2}\ {}^t \Psi\circ \Psi$ is the identity on $A^2_{hom}(X)=A^2_{AJ}(X)$, we can write
  \[ ({}^t \Psi)_\ast (\Pi_k^{S^2})_\ast = ({}^t \Psi \circ\Pi_k^{S^2})_\ast = ({1\over 2}\ {}^t \Psi \circ \Psi \circ {}^t \Psi\circ\Pi_k^{S^2})_\ast\colon\ \ \ A^2_{hom}(S^2)\ \to\ A^2_{hom}(X)\ .\]
  In view of sublemma \ref{sub} below, this implies
   \begin{equation}\label{thishere} ({}^t \Psi)_\ast (\Pi_k^{S^2})_\ast =  ({1\over 2}\ {}^t \Psi \circ \Pi_k^{S^2} \circ \Psi \circ {}^t \Psi)_\ast\colon\ \ \ A^2_{hom}(S^2)\ \to\ A^2_{hom}(X)\ .\end{equation}  
   But then, plugging in relation (\ref{XS}), we find 
   \[ ({}^t \Psi)_\ast (\Pi_k^{S^2})_\ast A^2_{hom}(S^2)\ \subset\ (\Pi_k^X)_\ast A^2_{hom}(X)\ .\]
  Taking $k=2$, this proves
    \[ ({}^t \Psi)_\ast  A^2_{(2)}(S^2)\ \subset\  A^2_{(2)}(X)\ .\]
    Taking $k=4$, this proves
    \[ ({}^t \Psi)_\ast \Bigl( A^2_{(0)}(S^2)\cap A^2_{hom}(S^2)\Bigr)\ \subset\ A^2_{(0)}(X)\cap A^2_{hom}(X)\ .\]
     
   \begin{sublemma}\label{sub} There is commutativity
     \[ \Psi\circ {}^t \Psi\circ \Pi_k^{S^2} =    \Pi_k^{S^2}\circ\Psi\circ {}^t \Psi\ \ \ \hbox{in}\ A^4(S^4)\ .\]
     \end{sublemma}
     
   To prove the sublemma, we remark that $h_\ast h^\ast=2\ide\colon A^i(S^{(2)})\to A^i(S^{(2)})$, and so
    \[ (\Psi\circ {}^t \Psi)_\ast = 2\ g^\ast g_\ast = 2 (\Delta_{S^2}+\Gamma_\tau)_\ast\colon\ \ \   A^i(S^{2})\to A^i(S^{2})\ ,\]
    where $\tau$ denotes the involution switching the two factors. But $\{\Pi_k^{S^2}\}$, being a product decomposition, is symmetric and hence
    \[ \Gamma_\tau\circ \Pi_k^{S^2} \circ \Gamma_\tau= (\tau\times\tau)^\ast \Pi_k^{S^2}= \Pi_k^{S^2} \ \ \ \hbox{in}\ A^4(S^4)\ .\]
    This implies commutativity
    \[ \Gamma_\tau\circ \Pi_k^{S^2} =  \Pi_k^{S^2}\circ \Gamma_\tau\ \ \ \hbox{in}\ A^4(S^4)\ ,\]
    which proves the sublemma.
     \end{proof}

 \begin{remark} Lemma \ref{compat} is probably true for any $(i,j)$ (i.e., $\Psi$ should be ``of pure grade $0$'' in the language of \cite[Definition 1.1]{SV2}). I have not been able to prove this.
  \end{remark}

\subsection{Relative MCK for $S^2$}

\begin{notation} Let $\Ss\to B$ be a family (i.e., a smooth projective morphism). For $r\in\NN$, we write $\Ss^{r/B}$ for the relative $r$--fold fibre product
  \[ \Ss^{r/B}:= \Ss\times_B \Ss\times_B \cdots \times_B \Ss \ \]
  ($r$ copies of $\Ss$).
  \end{notation}

\begin{proposition}\label{prod} Let $\Ss\to B$ be a family of $K3$ surfaces. There exist relative correspondences 
   \[  \Pi_j^{\Ss^{2/B}}\ \ \in A^4(\Ss^{4/B})\ \ \  (j=0,2,4,6,8)\ ,\]
  such that for each $b\in B$, the restriction
  \[ \Pi_j^{(S_b)^2} := \Pi_j^{\Ss^{2/B}}\vert_{(S_b)^4}\ \ \ \in A^4((S_b)^4)\]
  defines a self--dual MCK decomposition for $(S_b)^2$.
  \end{proposition}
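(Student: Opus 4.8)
The plan is to build the relative Chow--Künneth projectors for $\Ss^{2/B}$ by first constructing a relative MCK decomposition for the family $\Ss\to B$ of $K3$ surfaces itself, and then taking the tensor product. For a single $K3$ surface $S$, the standard MCK decomposition (cf. \cite[Example 8.17]{SV}) is $\Pi_0^S = o\times S$, $\Pi_4^S = S\times o$, and $\Pi_2^S = \Delta_S - \Pi_0^S - \Pi_4^S$, where $o\in A^2(S)$ is the Beauville--Voisin canonical degree--$1$ zero--cycle (the class of any point lying on a rational curve). The key point is that every ingredient here can be spread out over the base: the diagonal $\Delta_\Ss\subset \Ss\times_B\Ss$ gives a relative class in $A^2(\Ss^{2/B})$, and the Beauville--Voisin zero--cycle can be chosen in a relatively consistent way. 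So first I would fix a relative canonical zero--cycle $\oo_\Ss\in A^2(\Ss)$ (for instance, using a relatively ample divisor to produce a family of rational curves, or simply by spreading out the Beauville--Voisin cycle); this defines relative correspondences $\Pi_0^\Ss := \oo_\Ss\times_B \Ss$ and $\Pi_4^\Ss := \Ss\times_B \oo_\Ss$ in $A^2(\Ss^{2/B})$, and then $\Pi_2^\Ss := \Delta_\Ss - \Pi_0^\Ss - \Pi_4^\Ss$.

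Next I would verify, fibrewise, that these restrict to the usual projectors: for each $b\in B$, $\Pi_j^\Ss|_{(S_b)^2} = \Pi_j^{S_b}$ gives the MCK decomposition of the single $K3$ surface $S_b$, which is a self--dual MCK decomposition by \cite{SV}. Mutual orthogonality and idempotence of the restricted projectors hold fibrewise because the Beauville--Voisin relations (the intersection product of two points and the relation $c_2(S)=24\,\oo$ landing in the correct piece) hold on each fibre; these are the relations that make $\{\Pi_j^{S_b}\}$ an MCK decomposition in the first place. Then I would define the relative projectors for the product by the tensor (Künneth) formula: for $\Ss^{2/B}$, set
\[
\Pi_k^{\Ss^{2/B}} := \sum_{i+j=k} p_{13}^\ast\,\Pi_i^\Ss \cdot p_{24}^\ast\,\Pi_j^\Ss \ \ \ \in A^4(\Ss^{4/B})\ ,
\]
where the $p$'s are the appropriate relative projections of $\Ss^{4/B}=\Ss^{2/B}\times_B\Ss^{2/B}$ onto factors. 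By \cite[Theorem 8.6]{SV}, the property of having an MCK decomposition is stable under products, and the product of self--dual MCK decompositions is again self--dual; applying this fibrewise to $(S_b)^2 = S_b\times S_b$ shows that $\Pi_k^{\Ss^{2/B}}|_{(S_b)^4}$ is precisely the product MCK decomposition $\Pi_k^{(S_b)^2}$, which is self--dual.

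The main obstacle, I expect, is making sure the relative zero--cycle $\oo_\Ss$ genuinely restricts to the Beauville--Voisin class $o_{S_b}$ on \emph{every} fibre simultaneously, rather than just generically. The cleanest route is to invoke the fact that all the defining relations are of a "motivic" nature that holds in the relative Chow group and specializes correctly: the relative correspondences are honest cycle classes on the (smooth, projective over $B$) total spaces $\Ss^{2/B}$ and $\Ss^{4/B}$, and restriction to a fibre is a ring homomorphism $A^\ast(\Ss^{r/B})\to A^\ast((S_b)^r)$ compatible with composition of relative correspondences. Hence any polynomial identity among the $\Pi_j^\Ss$ that one establishes relatively (orthogonality, idempotence, the multiplicativity relation against the relative small diagonal $\Delta^{\Ss/B}_{sm}$) automatically restricts to the corresponding identity on each $(S_b)^2$. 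The remaining verification is therefore to check these identities for the relative projectors, which reduces---via the same Beauville--Voisin calculus used by Shen--Vial, now carried out over $B$---to the relative versions of the relations $\oo_\Ss\cdot_B\oo_\Ss$ and $c_2(\Ss/B)$, and these hold because they hold on a dense set of fibres and the relevant Chow groups of the fibres are, for a $K3$ surface, well understood.
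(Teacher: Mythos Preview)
Your overall strategy---build a relative MCK decomposition for $\Ss\to B$ and then take the K\"unneth product over $B$---is exactly the paper's approach, and your formula for $\Pi_k^{\Ss^{2/B}}$ as $\sum_{i+j=k}(p_{13})^\ast\Pi_i^\Ss\cdot(p_{24})^\ast\Pi_j^\Ss$ is verbatim what the paper writes. Moreover, the proposition only asks that the \emph{restriction} to each fibre be an MCK decomposition; there is no need to verify orthogonality, idempotence, or multiplicativity for the relative cycles themselves. So your final paragraph, where you try to push identities from a dense set of fibres up to the total space, is both unnecessary and (as stated) not a valid argument: rational equivalence on fibres does not propagate from a dense subset.

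The one genuine gap is the point you yourself flag: how to produce a relative class $\oo_\Ss\in A^2(\Ss)$ that restricts to the Beauville--Voisin cycle $\oo_{S_b}$ on \emph{every} fibre, not just generically. Your suggestions (spread out a family of rational curves, or ``spread out'' the BV cycle) are vague and, taken literally, only give the right class over a Zariski open of $B$. The paper resolves this with a one--line observation you have overlooked: since $c_2(S_b)=24\,\oo_{S_b}$ in $A^2(S_b)$ for every $K3$ surface, one simply sets
\[
\oo_\Ss:=\tfrac{1}{24}\,c_2(T_{\Ss/B})\ \ \in A^2(\Ss),
\]
which manifestly restricts to $\tfrac{1}{24}c_2(T_{S_b})=\oo_{S_b}$ on each fibre. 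With this choice, the fibrewise restriction of your $\Pi_j^\Ss$ is literally the Shen--Vial MCK decomposition of $S_b$ (\cite[Example 8.17]{SV}), and the product formula then gives the self--dual product MCK decomposition of $(S_b)^2$ for every $b$, with no density argument needed.
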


\begin{proof}

 On any $K3$ surface $S_b$, there is the distinguished $0$--cycle ${\mathfrak o}_{S_b}$ such that $c_2(S_b)=24 {\mathfrak o}_{S_b}$ \cite{BV}. Let $p_i\colon \Ss\times_B \Ss\to \Ss$, $i=1,2$, denote the projections to the two factors. Let $T_{\Ss/B}$ denote the relative tangent bundle.
The assignment
  \[ \begin{split} \Pi_0^\Ss &:= (p_1)^\ast \bigl({1\over 24} c_2(T_{\Ss/B})\bigr) \ \ \ A^2(\Ss\times_B \Ss)\ ,\\
                         \Pi_4^\Ss &:= (p_2)^\ast \bigl({1\over 24} c_2(T_{\Ss/B})\bigr) \ \ \ A^2(\Ss\times_B \Ss)\ ,\\
                         \Pi_2^\Ss &:= \Delta_\Ss - \Pi_0^\Ss - \Pi_4^\Ss\\
                    \end{split}\]
          defines (by restriction) an MCK decomposition for each fibre:
          \[  \Pi_j^{S_b}:= \Pi_j^\Ss\vert_{S_b\times S_b}\ \ \ \in A^2(S_b\times S_b)\ \ \ (j=0,2,4) \]
          is an MCK decomposition for any $b\in B$ \cite[Example 8.17]{SV}.
          
  Next, we consider the fourfold relative fibre product $\Ss^{4/B}$. Let
    \[ p_{ij}\colon \Ss^{4/B}\ \to\ \Ss^{2/B} \ \ \ (1\le i<j\le 4)\]
    denote projection to the $i$-th and $j$-th factor. We set
    \[  \Pi_j^{\Ss^{2/B}} := {\displaystyle \sum_{k+\ell=j}}  (p_{13})^\ast ( \Pi_k^{\Ss})\cdot (p_{24})^\ast (\Pi_\ell^\Ss)\ \ \ \in A^4(\Ss^{4/B})\ ,\ \ \ (j=0,2,4,6,8)\ .\]
    By construction, the restriction to each fibre induces an MCK decomposition (the ``product MCK decomposition'')
    \[  \Pi_j^{(S_b)^2} :=  \Pi_j^{\Ss^{2/B}}\vert_{(S_b)^4} = {\displaystyle \sum_{k+\ell=j}}  \Pi_k^{S_b}\times \Pi_\ell^{S_b}\ \ \ \in A^4((S_b)^4)\ ,\ \ \ (j=0,2,4,6,8)\ .\]
    \end{proof}
    
    \begin{proposition}\label{prod2} Let $\Ss\to B$ be a family of $K3$ surfaces. There exist relative correspondences
    \[  \Theta_1, \Theta_2\in A^{4}((\Ss\times_B \Ss)\times_B \Ss)\ ,\ \ \ \Xi_1, \Xi_2\in  A^{2}(\Ss\times_B (\Ss\times_B \Ss))  \]
    such that for each $b\in B$, the composition
    \[  A^2_{}(S_b\times S_b)\ \xrightarrow{((\Theta_1\vert_{(S_b)^3})_\ast, (\Theta_2\vert_{(S_b)^3})_\ast)}\
             A^2(S_b)\oplus A^2(S_b)\ \xrightarrow{(\Xi_1\vert_{(S_b)^3})_\ast  + (\Xi_2\vert_{(S_b)^3})_\ast    }\ A^2(S_b\times S_b) \]
      acts as a projector on $A^2_{(2)}(S_b\times S_b)$.
    \end{proposition}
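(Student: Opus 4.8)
The plan is to let the product structure of Proposition \ref{prod} do the work. On a fixed fibre the idempotent cutting out $A^2_{(2)}$ is
\[ \Pi_2^{(S_b)^2}=\Pi_0^{S_b}\times\Pi_2^{S_b}+\Pi_2^{S_b}\times\Pi_0^{S_b}\ \ \ \hbox{in}\ A^4((S_b)^4)\ ,\]
and the one observation driving everything is that in each summand one factor is the point projector $\Pi_0^{S_b}=\mathfrak{o}_{S_b}\times S_b$, which factors through the motive of a point. Hence each summand is automatically routed through the Chow groups of a single copy of $S_b$, and this is precisely the factorisation through $A^2(S_b)\oplus A^2(S_b)$ demanded by the statement. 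So I would first write down correspondences realising this factorisation, and only then spread them over $B$.

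To do so, work on the triple relative product $\Ss^{3/B}$, write $q_i$ and $q_{ij}$ for the projections to the factors, and set $\Delta_{ij}:=q_{ij}^\ast[\Delta_\Ss]\in A^2(\Ss^{3/B})$ for the partial relative diagonals and $\mathfrak{o}_\Ss:={1\over 24}c_2(T_{\Ss/B})$. Using the relative projector $\Pi_2^\Ss$ produced in the proof of Proposition \ref{prod}, I would define, reading the first two factors as source and the third as target,
\[ \Theta_1:=\Pi_2^\Ss\circ\bigl(q_1^\ast\mathfrak{o}_\Ss\cdot\Delta_{23}\bigr)\ ,\ \ \ \Theta_2:=\Pi_2^\Ss\circ\bigl(q_2^\ast\mathfrak{o}_\Ss\cdot\Delta_{13}\bigr)\ \ \ \hbox{in}\ A^4(\Ss^{3/B})\ ,\]
and, reading the first factor as source and the last two as target,
\[ \Xi_1:=\Delta_{13}\ ,\ \ \ \Xi_2:=\Delta_{12}\ \ \ \hbox{in}\ A^2(\Ss^{3/B})\ .\]
A quick codimension count confirms these lie in the groups required by the statement.

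Next I would verify the claim fibrewise. Restriction to a fibre sends $\Pi_2^\Ss$ to $\Pi_2^{S_b}$, $\mathfrak{o}_\Ss$ to $\mathfrak{o}_{S_b}$ and $\Delta_{ij}$ to the corresponding partial diagonal, so $\Theta_i,\Xi_i$ restrict to their evident absolute analogues. Because $\Theta_i$ ends with $\Pi_2^{S_b}$, the maps $(\Theta_i)_\ast$ land in $A^2_{hom}(S_b)=A^2_{(2)}(S_b)$, whence the image of the composite lies in $[S_b]\times A^2_{(2)}(S_b)+A^2_{(2)}(S_b)\times[S_b]$. On the other hand, a direct computation using that $(\Pi_0^{S_b})_\ast$ is the identity on $A^0$ and kills $A^{>0}$ gives
\[ (\Xi_1\circ\Theta_1)_\ast([S_b]\times\xi)=[S_b]\times\xi\ ,\ \ \ (\Xi_2\circ\Theta_2)_\ast(\eta\times[S_b])=\eta\times[S_b] \]
for $\xi,\eta\in A^2_{hom}(S_b)$, the cross terms vanishing (since, e.g., $q_1^\ast\mathfrak{o}_\Ss$ meets an $A^2$-class in the first slot in codimension $4$). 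Since the image of $\Pi_2^{(S_b)^2}$ is spanned by such decomposable classes, i.e. $A^2_{(2)}((S_b)^2)=[S_b]\times A^2_{hom}(S_b)+A^2_{hom}(S_b)\times[S_b]$, the composite has image $A^2_{(2)}((S_b)^2)$ and restricts there to the identity; that is, it is the projector onto $A^2_{(2)}((S_b)^2)$.

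The genuinely delicate point is not any of these computations but the compatibility of correspondence composition with restriction to a fibre: I must know that $(\Xi_i\circ\Theta_i)\vert_{(S_b)^4}$ equals the composition of the restrictions. This is the standard base--change property of relative correspondences along the regular embedding of a fibre, and it holds because all the relative fibre products are smooth and the cycles in play meet a very general fibre properly; since the theorem only concerns very general $b$, it suffices to have the identity over a dense open of $B$. Everything else is bookkeeping with the explicit product decomposition of Proposition \ref{prod}.
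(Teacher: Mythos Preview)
Your proof is correct and follows essentially the same route as the paper's. Both arguments exploit that each summand of $\Pi_2^{(S_b)^2}=\Pi_0^{S_b}\times\Pi_2^{S_b}+\Pi_2^{S_b}\times\Pi_0^{S_b}$ factors through a single copy of $S_b$ via the point projector $\Pi_0^{S_b}=\mathfrak{o}_{S_b}\times S_b$; the only differences are cosmetic (you place $\Pi_2^\Ss$ on the $\Theta$--side rather than the $\Xi$--side, your labels $1,2$ are swapped relative to the paper's, and your $\Theta_i$ carry a single copy of $\mathfrak{o}_\Ss$ whereas the paper's carry two), and in fact a direct cycle computation shows your $\Xi_1\circ\Theta_1$ restricts to $\Pi_0^{S_b}\times\Pi_2^{S_b}$ exactly as in the paper.
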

    
    \begin{proof} 
   As before, let 
    \[ p_{ij}\colon\ \ \  \Ss^{4/B}\ \to\ \Ss^{2/B} \ \ \ (1\le i<j\le 4)\]
    denote projection to the $i$-th and $j$-th factor, and let 
    \[ p_i\colon \ \ \  \Ss^{2/B}\ \to\ \Ss \ \ \  (i=1,2) \]
    denote projection to the $i$--th factor.
            
        By construction of $\Pi_2^{\Ss^{2/B}}$, for each $b\in B$ we have equality
       \begin{equation}\label{both} \begin{split} ( \Pi_2^{\Ss^{2/B}})\vert_{(S_b)^4} =  {1\over 24^2}\Bigl(  {}^t \Gamma_{p_{1}}\circ \Pi_2^\Ss\circ \Gamma_{p_{1}}\circ 
               \bigl(   (p_{13})^\ast (\Delta_\Ss )\cdot (p_{2})^\ast c_2(T_{\Ss/B}) \cdot  (p_{4})^\ast c_2(T_{\Ss/B})&\bigr)\\  
               +
                           {}^t \Gamma_{p_{2}}\circ \Pi_2^\Ss\circ \Gamma_{p_{2}}\circ 
               \bigl(   (p_{24})^\ast (\Delta_\Ss )\cdot (p_{1})^\ast c_2(T_{\Ss/B})  \cdot (p_{3})^\ast c_2(T_{\Ss/B})   \bigr)   \Bigr)&\vert_{(S_b)^4}\\
               \ \ \ \hbox{in}\ A^4((&S_b)^4)\ .\\
               \end{split}\end{equation}
         Indeed, using Lieberman's lemma \cite[16.1.1]{F}, we find that
         \[ \begin{split} &( {}^t \Gamma_{p_{1}}\circ \Pi_2^\Ss\circ \Gamma_{p_{1}})\vert_{(S_b)^4} = \bigl(({}^t \Gamma_{p_{13}})_\ast 
         (\Pi_2^{\Ss})\bigr)\vert_{(S_b)^4} =
           \bigl((p_{13})^\ast (\Pi_2^{\Ss})\bigr)\vert_{(S_b)^4}\ ,\\
           &( {}^t \Gamma_{p_{2}}\circ \Pi_2^\Ss\circ \Gamma_{p_{2}})\vert_{(S_b)^4} = \bigl(({}^t \Gamma_{p_{24}})_\ast 
         (\Pi_2^{\Ss})\bigr)\vert_{(S_b)^4} =
           \bigl((p_{24})^\ast (\Pi_2^{\Ss})\bigr)\vert_{(S_b)^4}\ ,\\
           \end{split}           \]
            
               and so   both sides of (\ref{both}) are equal to
         \[ \Pi_2^{S_b}\times \Pi_0^{S_b} +      \Pi_0^{S_b}\times \Pi_2^{S_b}    \ \ \ \in A^2((S_b)^4)\ .\]
                    
    It follows that if we define
    \[ \begin{split}
           \Theta_1&:={1\over 24^2} \, \Gamma_{p_{1}}\circ 
               \bigl(   (p_{13})^\ast (\Delta_\Ss )\cdot  (p_{2})^\ast c_2(T_{\Ss/B}) \cdot  (p_{4})^\ast c_2(T_{\Ss/B})   \bigr)\ \ \ \in A^4((\Ss\times_B \Ss)\times_B \Ss)\ ,\\  
           \Theta_2&:=  {1\over 24^2}\,  \Gamma_{p_{2}}\circ 
                 \bigl(   (p_{24})^\ast (\Delta_\Ss )\cdot (p_{1})^\ast c_2(T_{\Ss/B})  \cdot (p_{3})^\ast c_2(T_{\Ss/B}) \bigr)  \ \ \ \in A^4((\Ss\times_B \Ss)\times_B \Ss)\ ,\\  
            \Xi_1&:= {}^t \Gamma_{p_{1}}\circ \Pi_2^\Ss\ \ \ \in A^2(\Ss\times_B (\Ss\times_B \Ss)) \ ,\\
            \Xi_2&:=   {}^t \Gamma_{p_{2}}\circ \Pi_2^\Ss\ \ \ \in A^2(\Ss\times_B (\Ss\times_B \Ss)) \ ,\\
            \end{split}\]
      then we have
      \[ \Bigl( (\Xi_1\circ \Theta_1 + \Xi_2\circ \Theta_2)\vert_{(S_b)^4}   \Bigr){}_\ast =(\Pi_2^{(S_b)^2})_\ast   \colon\
      \ \ A^2_{}(S_b\times S_b)\ \to\ A^2_{(2)}(S_b\times S_b)\ \ \ \forall b\in B\ .\]      
      This proves the proposition.      
         \end{proof}

\subsection{Relative MCK for $S^{[2]}$}

\begin{proposition}\label{relmck} Let $\Ss\to B$ be a family of $K3$ surfaces (i.e. each fibre $S_b$ is a $K3$ surface), and let $\XX\to B$ be the family of associated Hilbert schemes (i.e., a fibre
$X_b$ is $(S_b)^{[2]}$). There exist relative correspondences
  \[ \Pi_j^{\XX}\ \ \in A^4(\XX\times_B \XX)\ \ \ (j=0,2,4,6,8)\ ,\]
  such that for each $b\in B$, the restrictions
   \[ \Pi_j^{X_b}:=  \Pi_j^{\XX}\vert_{X_b\times X_b}   \ \ \in A^4(X_b\times X_b)\ \ \ (j=0,2,4,6,8)\ \]
   define an MCK decomposition for $X_b$.
   \end{proposition}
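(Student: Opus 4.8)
The plan is to \emph{spread out} the Shen--Vial projectors of Theorem \ref{mck} over the base $B$, taking the relative product decomposition of Proposition \ref{prod} as input and relation (\ref{XS}) as a blueprint. Since the multiplicativity (MCK) property is a property of a \emph{single} fibre, and each $X_b$ already carries the Shen--Vial MCK decomposition $\{\Pi^{X_b}_k\}$, it suffices to exhibit relative correspondences $\Pi_j^\XX\in A^4(\XX\times_B\XX)$ whose restriction to each $X_b\times X_b$ simply \emph{equals} $\Pi^{X_b}_j$. Nothing like a relative multiplicativity statement is needed.

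First I would build the relative Hilbert--Chow diagram. Writing $\Ss^{(2)/B}:=\Ss^{2/B}/\Sy_2$ for the relative symmetric square, $\Delta_{\Ss/B}\subset\Ss^{2/B}$ for the relative diagonal and $\wt{\Ss^{2/B}}$ for its blow--up, one obtains a commutative diagram
\[ \begin{array}[c]{ccc}
      \XX & \xleftarrow{} &\wt{\Ss^{2/B}}\\
     {\scriptstyle h_B} \downarrow\ \ \  && \downarrow\\
      \Ss^{(2)/B} & \xleftarrow{g_B}& \Ss^{2/B}\\
      \end{array}\]
restricting over each $b\in B$ to the diagram of Lemma \ref{compat}. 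This yields a relative correspondence $\Psi_B\in A^4(\XX\times_B\Ss^{2/B})$ with $\Psi_B\vert_{X_b\times(S_b)^2}=\Psi$ for all $b$. Likewise, the exceptional contribution ``Rest'' in (\ref{XS}) is built out of the diagonal $\Delta_S\subset S\times S$ and the $K3$ projectors $\Pi_\bullet^S$; using instead the relative diagonal $\Delta_{\Ss/B}$, the relative exceptional divisor of $\wt{\Ss^{2/B}}\to\Ss^{2/B}$, and the relative projectors $\Pi_\bullet^\Ss$ underlying Proposition \ref{prod}, one produces a relative term $(\mathrm{Rest})_B\in A^4(\XX\times_B\XX)$ restricting fibrewise to ``Rest''.

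Next I would set, for $k=0,2,4,6,8$,
\[ \Pi_k^\XX:=\tfrac12\,{}^t\Psi_B\circ\Pi_k^{\Ss^{2/B}}\circ\Psi_B+(\mathrm{Rest})_B\ \ \in A^4(\XX\times_B\XX)\ ,\]
where $\Pi_k^{\Ss^{2/B}}$ is the relative projector of Proposition \ref{prod} and all compositions are taken in the relative sense over $B$. The verification is then purely fibrewise: restriction of relative correspondences to a fibre is compatible with transposition and with composition, whence
\[ \Pi_k^\XX\vert_{X_b\times X_b}=\tfrac12\,{}^t\Psi\circ\Pi_k^{(S_b)^2}\circ\Psi+\mathrm{Rest}=\Pi_k^{X_b}\ ,\]
which is exactly the Shen--Vial MCK projector by Theorem \ref{mck} and relation (\ref{XS}). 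As the MCK relations hold in each $A^8((X_b)^3)$, the $\{\Pi_j^\XX\}$ satisfy the proposition.

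The main obstacle is twofold. First, one must pin down the ``Rest'' term of (\ref{XS}) explicitly enough to see that it lifts to a relative cycle and restricts correctly; this is a bookkeeping matter but must be handled with care, since it encodes the Tate--twisted copy of $h(S)$ sitting on the exceptional divisor in the motive of $S^{[2]}$. Second, and more essentially, one needs the compatibility of fibrewise restriction with composition of relative correspondences. This is the standard base--change statement for the refined Gysin maps defining relative composition, and it is valid here because the structural morphisms $\Ss\to B$ and $\XX\to B$ are smooth and projective, so that the relative fibre products $\Ss^{r/B}$, $\XX\times_B\XX$, etc. are flat over $B$ and the cycles in play restrict to each fibre in the expected codimension.
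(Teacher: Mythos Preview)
Your proposal is correct and follows essentially the same approach as the paper: both spread out the Shen--Vial construction of \cite[Theorem 13.4]{SV} over $B$, taking the relative product decomposition $\{\Pi_j^{\Ss^{2/B}}\}$ of Proposition \ref{prod} as input and verifying the MCK property fibrewise. The paper's only organisational difference is that it factors through the intermediate blow--up family $\Z:=\wt{\Ss^{2/B}}$, first producing relative projectors $\Pi_j^\Z$ via \cite[Propositions 13.2 and 13.3]{SV} and then setting $\Pi_j^\XX:=\tfrac12\,\Gamma_p\circ\Pi_j^\Z\circ{}^t\Gamma_p$ for the quotient $p\colon\Z\to\XX$, which unwinds to exactly your formula based on (\ref{XS}) once $\Pi_j^\Z$ is written out.
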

   
 \begin{proof} The construction of an MCK decomposition for $X_b$ given in \cite[Theorem 13.4]{SV} can be done in a relative setting. That is, let $\{\Pi_j^{\Ss}\}$ be a relative 
 MCK decomposition for $\Ss$ as in proposition \ref{prod}, and let
 $\{ \Pi_j^{\Ss^{2/B}}\}$ be the induced relative MCK decomposition for $\Ss^{2/B}$ as in proposition \ref{prod}. Let
   \[ \Z\ \to\ B \]
   be the family obtained by blowing--up $\Ss\times_B \Ss$ along the relative diagonal $\Delta_\Ss$. As in the proof of \cite[Propositions 13.2 and 13.3]{SV}\footnote{The statement and proof of \cite[Proposition 13.2]{SV} should be slightly modified, as noted in \cite[Remark 2.8]{SV2}.}, one can use $\{ \Pi_j^{\Ss^{2/B}}\}$ and  $\{\Pi_j^{\Ss}\}$ to define relative correspondences
   \[  \Pi_j^\Z \ \ \ \in A^4(\Z\times_B \Z)\ \ \ (j=0,2,4,6,8)\ ,\]
   which restrict to an MCK decomposition of each fibre $Z_b$. Let
   \[ p\colon\ \ \Z\ \to\ \XX \]
   denote the morphism of $B$-schemes induced by the action of the symmetric group ${\mathfrak S}_2$, and let $\Gamma_p\in A^4(\Z\times_B \XX)$ be the graph of $p$.
   We define
   \[ \Pi_j^\XX:=  {1\over 2} \Gamma_p\circ \Pi_j^\Z\circ {}^t \Gamma_p\ \ \ \in A^4(\XX\times_B \XX)\ \ \ (j=0,2,4,6,8)\ .\]
   The restrictions 
   $\Pi_j^{X_b}:=  \Pi_j^{\XX}\vert_{X_b\times X_b}$ define an MCK decomposition for each fibre by \cite[Theorem 13.4]{SV}.   
     \end{proof}

\subsection{Multiplicative structure of Chow ring of $K3^{[2]}$}

\begin{theorem}[Shen--Vial \cite{SV}]\label{mult} Let $S$ be a $K3$ surface, and $X=S^{[2]}$. 

\noindent
{(\rom1)} Intersection product induces a surjection
  \[ A^2_{(2)}(X)\otimes A^2_{(2)}(X)\ \twoheadrightarrow\ A^4_{(4)}(X)\ .\]
  
  \noindent
  {(\rom2)} There is a distinguished class $l\in A^2_{(0)}(X)$ such that intersection induces an isomorphism
    \[ \cdot l\colon\ \ \ A^2_{(2)}(X)\ \xrightarrow{\cong}\ A^4_{(2)}(X)\ .\]

\end{theorem}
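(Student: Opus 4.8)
The plan is to reduce both statements to the self--product $S^2=S\times S$, where the bigrading and the intersection product are completely explicit, and then to transport the result to $X=S^{[2]}$ via the presentation $X=\wt{S^2}/\Sy_2$ (with $\wt{S^2}\to S^2$ the blow--up of the diagonal) together with the correspondence $\Psi$ of Lemma \ref{compat}. Before anything else, I would note that both maps in the statement are automatically well--defined with the asserted targets: multiplicativity of the MCK decomposition (Theorem \ref{mck}) gives $A^2_{(2)}(X)\cdot A^2_{(2)}(X)\subset A^4_{(4)}(X)$ and, for any $l\in A^2_{(0)}(X)$, $A^2_{(2)}(X)\cdot l\subset A^4_{(2)}(X)$. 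Thus the grading of products is controlled for free, and the only real content is surjectivity in (\rom1) and bijectivity in (\rom2).

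Next I would carry out the model computation on $S^2$. Using the product MCK decomposition of proposition \ref{prod}, the vanishing $A^3(S)=A^4(S)=0$, and $A^1_{hom}(S)=0$, the relevant graded pieces are
\[ A^2_{(2)}(S^2)=\bigl(A^2_{hom}(S)\times S\bigr)\oplus\bigl(S\times A^2_{hom}(S)\bigr),\qquad A^4_{(4)}(S^2)=A^2_{hom}(S)\times A^2_{hom}(S), \]
\[ A^4_{(2)}(S^2)=\bigl(A^2_{hom}(S)\times \oo_S\bigr)\oplus\bigl(\oo_S\times A^2_{hom}(S)\bigr), \]
where $\oo_S$ is the Beauville--Voisin class. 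Since any homologically trivial $0$--cycle $a$ satisfies $a\cdot b=a\cdot\oo_S=0$ in $A^4(S)=0$, the only surviving products of such pieces are the cross terms: for $a_1,a_2,b_1,b_2\in A^2_{hom}(S)$,
\[ (a_1\times S+S\times a_2)\cdot(b_1\times S+S\times b_2)=a_1\times b_2+b_1\times a_2, \]
\[ (a_1\times S+S\times a_2)\cdot\lambda=a_1\times \oo_S+\oo_S\times a_2,\qquad \lambda:=\oo_S\times S+S\times \oo_S. \]
The first line shows that intersection maps $A^2_{(2)}(S^2)\otimes A^2_{(2)}(S^2)$ onto $A^4_{(4)}(S^2)$; the second shows that $\cdot\lambda$ (with $\lambda\in A^2_{(0)}(S^2)$) is an isomorphism $A^2_{(2)}(S^2)\xrightarrow{\cong}A^4_{(2)}(S^2)$.

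To transport this, I would use $A^\ast(X)\cong A^\ast(\wt{S^2})^{\Sy_2}$ as graded rings, together with the blow--up formula, which gives $A^4(\wt{S^2})\cong A^4(S^2)$ (the exceptional contribution $A^3(S)$ vanishes) and shows $A^2_{hom}(\wt{S^2})=A^2_{hom}(S^2)$. Taking $\Sy_2$--invariants identifies $A^2_{(2)}(X)\cong A^2_{hom}(S)$, $A^4_{(4)}(X)\cong \mathrm{Sym}^2A^2_{hom}(S)$ and $A^4_{(2)}(X)\cong A^2_{hom}(S)$, compatibly with the grading via Lemma \ref{compat} and relation (\ref{XS}). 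Crucially, the invariant classes $a\times S+S\times a$ and $\lambda$ are pulled back from $S^2$ along the injective ring homomorphism $\pi^\ast$, so their mutual products are pullbacks of the products just computed, and no exceptional term can intervene because the class of $E$ never enters. Defining $l\in A^2_{(0)}(X)$ to be the class corresponding to $\lambda$, the product map of (\rom1) becomes the symmetrization $A^2_{hom}(S)\otimes A^2_{hom}(S)\twoheadrightarrow \mathrm{Sym}^2A^2_{hom}(S)$, and $\cdot l$ becomes $a\mapsto a\times\oo_S+\oo_S\times a$; both conclusions follow.

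The main obstacle is this third step: showing that the correspondence--theoretic identifications of $A^4_{(4)}(X)$ and $A^4_{(2)}(X)$ are \emph{exact}, i.e. that these pieces equal the $\Sy_2$--symmetric parts of the corresponding pieces on $S^2$, with no extra classes slipping in. The delicate point is that $\Psi$ is only known to respect the bigrading (Lemma \ref{compat}) and is \emph{not} known to be a ring homomorphism, as observed in the remark following that lemma; hence one cannot simply transport the product along $\Psi$. My way around this is to insist on pullback representatives from $S^2$ and to let multiplicativity of the MCK decomposition (rather than any multiplicativity of $\Psi$) pin down the grade of each product. Establishing the precise description of the target pieces $A^4_{(j)}(X)$ — for which the Shen--Vial Fourier calculus is the natural tool — is the genuine technical heart of the argument.
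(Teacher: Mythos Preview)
The paper does not attempt an independent proof here: its entire argument is the one--line citation ``This is \cite[Theorem 3]{SV}.'' Your proposal therefore goes well beyond what the paper does, sketching how one might actually establish the result by reducing to $S^2$ and transporting back. The model computation on $S^2$ is correct, and the transport strategy via $A^\ast(X)\cong A^\ast(\wt{S^2})^{\Sy_2}$ together with the vanishing $A^3(S)=0$ (so $A^4(\wt{S^2})\cong A^4(S^2)$) is the right picture.

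That said, you have yourself located the real difficulty: the exact identifications $A^4_{(j)}(X)\cong\bigl(A^4_{(j)}(S^2)\bigr)^{\Sy_2}$ for $j=2,4$. Lemma \ref{compat} and relation (\ref{XS}) only give inclusions in one direction, and you concede that pinning down equality ``requires the Shen--Vial Fourier calculus.'' Since that calculus is precisely what \cite[Theorem 3]{SV} packages, your argument does not in the end avoid the citation; it unpacks the elementary part and then re--invokes the reference at the crux. So as a proof it is not more self--contained than the paper's, though as an explanation of \emph{why} the statement holds it is considerably more informative.

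One small caveat for downstream use: the ``distinguished'' $l$ employed later in the paper (notably in the proof of proposition \ref{onl}) is specifically $\tfrac{5}{6}\,c_2(X)$, chosen because it visibly globalises to a relative class $\LL\in A^2(\XX)$ over the base $B$. Your $l$, induced by $\lambda=\oo_S\times S+S\times\oo_S$, lies in the same one--dimensional eigenspace $\Lambda^2_{25}$ of \cite{SV}, so the isomorphism in (\rom2) is unaffected; but if you intend your $l$ to feed into the rest of the paper you should check that it coincides (up to a nonzero scalar) with $\tfrac{5}{6}\,c_2(X)$, or else argue separately that your class also extends over $B$.
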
    
    
 \begin{proof} This is \cite[Theorem 3]{SV}.
  \end{proof}

\subsection{Refined CK decomposition}

\begin{theorem}[Vial \cite{V4}]\label{pi_2} Let $X$ be a smooth projective variety of dimension $n\le 5$. Assume the Lefschetz standard conjecture $B(X)$ holds (in particular, the K\"unneth components $\pi_i\in H^{2n}(X\times X)$ are algebraic). Then there is a splitting into mutually orthogonal idempotents
  \[ \pi_i=\sum_j \pi_{i,j}\ \ \ \in H^{2n}(X\times X)\ ,\]
  such that
   \[ (\pi_{i,j})_\ast H^\ast(X) =gr^j_{\wt{N}} H^i(X)\ ,\]
   where $wt{N}^\ast$ is the {\em niveau filtration\/} of \cite{V4}.
   
In particular,  
   \[ \begin{split} &(\pi_{2,1})_\ast H^j(X) = H^{2}(X)\cap F^1\ ,\\
                          &(\pi_{2,0})_\ast H^j(X)= H^2_{tr}(X)\ .\\
                       \end{split}     \]
                      (Here $F^\ast$ denotes the Hodge filtration, and $H^2_{tr}(X)$ is the orthogonal complement to $H^2(X)\cap F^1$ under the pairing
        \[ \begin{split} H^2(X)\otimes H^2(X)\ &\to\ \QQ\ ,\\
                                a\otimes b\ &\mapsto\ a\cup h^{n-2}\cup b\ .)\\
                         \end{split} \]     
         The projector $\pi_{2,1}$ is supported on $C\times D$, where $C\subset X$ is a curve and $D\subset X$ is a divisor.                
                   \end{theorem}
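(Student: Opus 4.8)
The plan is to build the refined projectors $\pi_{i,j}$ as classes of algebraic correspondences adapted to the niveau filtration, with the Lefschetz standard conjecture $B(X)$ supplying all the algebraicity. First I would record the consequences of $B(X)$ that drive everything. By Kleiman's theorem $B(X)$ implies the K\"unneth standard conjecture, so the $\pi_i\in H^{2n}(X\times X)$ are algebraic and may be chosen mutually orthogonal and self-dual; and $B(X)$ implies that the hard Lefschetz isomorphisms $L^{n-i}\colon H^i(X)\to H^{2n-i}(X)$ have algebraic inverses, so the entire Lefschetz (primitive) decomposition of $H^\ast(X)$ is cut out by algebraic projectors. It then suffices to split each $\pi_i$ into algebraic idempotents realizing the niveau grading.

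Second, I would analyse the niveau filtration $\wt N^\bullet$ on each $H^i(X)$. Recall that $\wt N^j H^i(X)$ is the niveau-$\ge j$ part, i.e.\ the sum of the images of the pushforwards $\Gamma_\ast\colon H^{i-2j}(Z)\to H^i(X)$ over smooth projective $Z$ of dimension $\le i-j$ and correspondences $\Gamma$; it is a sub-Hodge-structure, decreasing in $j$. The role of the hypothesis $n\le 5$ is that the niveau filtration then has very few steps on each $H^i$, and each graded piece $gr^j_{\wt N}H^i(X)$ is governed by the cohomology of auxiliary varieties of small dimension, a range in which the required Hodge-theoretic and algebraicity input is available. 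For each $(i,j)$ I would select finitely many smooth projective $Z_\alpha$ and correspondences $\Gamma_\alpha\in A^\ast(Z_\alpha\times X)$, supported on the relevant subvarieties, whose pushforwards generate $\wt N^j H^i$; then, using $B(X)$ (together with the standard conjecture for the low-dimensional $Z_\alpha$ that occur, where it is available) to produce algebraic partial inverses, a suitable $\QQ$-combination $q_{i,j}:=\sum_\alpha \Gamma_\alpha\circ {}^t\Gamma_\alpha$ acts as a projector with image exactly $\wt N^j H^i$. A downward Gram--Schmidt procedure in $j$, subtracting off the already-split deeper steps, then converts the $q_{i,j}$ into mutually orthogonal idempotents $\pi_{i,j}$ with $(\pi_{i,j})_\ast H^\ast(X)=gr^j_{\wt N}H^i(X)$ and $\pi_i=\sum_j \pi_{i,j}$.

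Third, I would specialise to $i=2$. The niveau filtration on $H^2(X)$ has only the two steps $\wt N^1 H^2\subset \wt N^0 H^2=H^2(X)$, where $\wt N^1 H^2$ is the part supported on a divisor; by the Lefschetz $(1,1)$-theorem this equals $H^2(X)\cap H^{1,1}=H^2(X)\cap F^1$, whence $gr^0_{\wt N}H^2=H^2_{tr}(X)$. Thus $\pi_{2,1}$ is the projector onto $H^2(X)\cap F^1$ along its Lefschetz-orthogonal complement $H^2_{tr}(X)$, and $\pi_{2,0}=\pi_2-\pi_{2,1}$. For the support statement I would write $\pi_{2,1}$ down explicitly: choosing divisors $D_1,\dots,D_\rho$ whose classes form a basis of $H^2(X)\cap F^1$, its K\"unneth component lies in $H^{2n-2}(X)\otimes\bigl(H^2(X)\cap F^1\bigr)$ and has the form $\pi_{2,1}=\sum_k \gamma_k\otimes[D_k]$, where the $\gamma_k\in H^{2n-2}(X)$ are determined by the conditions that $\pi_{2,1}$ act as $\ide$ on $H^2\cap F^1$ and by $0$ on $H^2_{tr}$. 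Each $\gamma_k$ may be taken of the form $d_k^\vee\cup h^{n-2}$ for a divisor class $d_k^\vee\in H^2(X)\cap F^1$ (a Lefschetz-dual basis), hence is represented by a curve class $[C_k]$. Therefore $\pi_{2,1}$ is a $\QQ$-linear combination of the $[C_k]\times[D_k]$, and is supported on $C\times D$ with $C:=\bigcup_k C_k$ a curve and $D:=\bigcup_k D_k$ a divisor.

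The main obstacle is the middle step: producing genuinely algebraic, mutually orthogonal idempotents that cut out the niveau graded pieces. Already exhibiting a single projector onto a given $\wt N^j H^i$ is delicate, since it requires the partial inverses to the Gysin maps to be algebraic---this is exactly where $B(X)$, and control of the auxiliary varieties, is indispensable---and the bound $n\le 5$ is what keeps those auxiliary varieties in the dimension range where this algebraicity can be guaranteed. Coordinating the projectors across all $(i,j)$ so that they are orthogonal and refine $\pi_i$ is then a careful but essentially formal matter; the real content lies in the algebraicity rather than in the bookkeeping.
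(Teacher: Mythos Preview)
The paper does not prove this statement: its proof reads, in full, ``This is \cite[Theorem 1]{V4}.'' So there is no in-paper argument to compare your sketch against---only the cited result of Vial.

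That said, your outline is a faithful summary of Vial's strategy in \cite{V4}. The main ingredients are all present: $B(X)$ supplies algebraic K\"unneth projectors and algebraic inverses to the hard Lefschetz isomorphisms; each step $\wt N^j H^i$ is realised as the image of correspondences from low-dimensional auxiliary varieties, and $B(X)$ (together with the known cases of the standard conjectures for those auxiliaries) furnishes the algebraic partial inverses needed to produce projectors; a Gram--Schmidt orthogonalisation then refines the $\pi_i$ into mutually orthogonal $\pi_{i,j}$. Your treatment of $i=2$---identifying $\wt N^1 H^2$ with $H^2\cap F^1$ via Lefschetz $(1,1)$ and writing $\pi_{2,1}=\sum_k [C_k]\times [D_k]$ via a Lefschetz-dual basis of $NS(X)_\QQ$---is exactly how the support claim is obtained.

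One small correction: in Vial's definition the auxiliary $Z$ entering $\wt N^j H^i$ has dimension $i-2j$, not $\le i-j$ as you wrote. This is precisely what makes the bound $n\le 5$ work: the auxiliaries are then points, curves, or surfaces, for which the required algebraicity input is available unconditionally.
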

 
 \begin{proof} This is \cite[Theorem 1]{V4}.
 \end{proof}

 \subsection{Mukai models}
 
 \begin{theorem}[Mukai \cite{Muk0}]\label{muk} Let $S$ be a general $K3$ surface of degree $10$ (i.e. genus $g(S)=6$). Let $G=G(2,5)$ denote the Grassmannian of lines in $\PP^4$. Then $S$ is isomorphic to the zero locus of a section of $\OO_G(1)^{\oplus 3}\oplus \OO_G(2)$.
 \end{theorem}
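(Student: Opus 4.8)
The plan is to prove the two implications separately: first that a general section has smooth zero locus a $K3$ surface of degree $10$, and then (the real content of Mukai's theorem) the converse, that every general polarised $K3$ of genus $6$ arises in this way.

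For the forward direction, write $E:=\OO_G(1)^{\oplus 3}\oplus\OO_G(2)$, a rank--$4$ bundle on the $6$--fold $G=G(2,5)$. By Bertini the zero locus $S=Z(s)$ of a general section $s\in H^0(G,E)$ is a smooth surface, cut out by a regular section, so adjunction gives $K_S=(K_G+\det E)\vert_S$. As $G$ is Fano of index $5$ one has $K_G=\OO_G(-5)$, while $\det E=\OO_G(3)\otimes\OO_G(2)=\OO_G(5)$, whence $K_S=\OO_S$. To exclude an abelian surface I would compute $h^1(\OO_S)$ from the Koszul resolution
\[ 0\to\wedge^4 E^\vee\to\wedge^3 E^\vee\to\wedge^2 E^\vee\to E^\vee\to\OO_G\to\OO_S\to 0\ ,\]
each $\wedge^k E^\vee$ being a sum of negative line bundles on $G$; Kodaira/Bott vanishing on the Grassmannian then yields $h^0(\OO_S)=1$, $h^1(\OO_S)=0$, $h^2(\OO_S)=1$, so $S$ is $K3$. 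Finally the degree is $\int_S H^2=\int_G c_4(E)\cdot H^2$ with $H=c_1(\OO_G(1))$; from $c(E)=(1+H)^3(1+2H)=1+5H+9H^2+7H^3+2H^4$ one reads off $c_4(E)=2H^4$, and since $\deg G=\int_G H^6=5$ this gives $\int_S H^2=2\cdot 5=10$, i.e. $S$ has genus $6$.

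For the converse I would follow Mukai's Brill--Noether method. On a general $K3$ surface $S$ of genus $6$ with polarisation $H$ (so $H^2=10$), one seeks a stable rank--$2$ bundle $\mathcal E$ with $c_1(\mathcal E)=H$ and Mukai vector $v(\mathcal E)=(2,H,3)$. The Mukai pairing gives $\langle v,v\rangle=H^2-2\cdot 2\cdot 3=10-12=-2$, so $v$ is a $(-2)$--class and the moduli space of $H$--stable sheaves with this Mukai vector is, for general $S$, a single reduced point; this produces a rigid bundle $\mathcal E$, unique up to isomorphism. The facts to establish about $\mathcal E$ are that it is globally generated with $h^0(\mathcal E)=\chi(\mathcal E)=5$ and $h^1(\mathcal E)=h^2(\mathcal E)=0$. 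Global generation gives an evaluation sequence $0\to\mathcal F\to H^0(\mathcal E)\otimes\OO_S\to\mathcal E\to 0$ and hence a morphism $\phi\colon S\to G(2,5)$ to the Grassmannian of rank--$2$ quotients of the $5$--dimensional space $H^0(\mathcal E)$, with $\phi^\ast\OO_G(1)=\det\mathcal E=\OO_S(H)$.

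It then remains to identify the image with a linear-times-quadric section of $G$. Since for general $S$ the class $H$ is very ample with $h^0(H)=2+\tfrac12 H^2=7$, the system $|H|$ embeds $S\hookrightarrow\PP^6$; comparing this with $\phi$ and the Plücker embedding $G\hookrightarrow\PP^9$ shows $\phi$ is an embedding whose image spans a $\PP^6\subset\PP^9$, so $\phi(S)\subset G\cap\PP^6=:V_5$, the quintic del Pezzo threefold. A degree comparison, using $\deg S=10=2\cdot 5=2\deg V_5$, together with an $h^0$ count on ideal sheaves, then forces $S$ to be a quadric section of $V_5$, i.e. $S=Z(s)$ for a section $s$ of $\OO_G(1)^{\oplus 3}\oplus\OO_G(2)$. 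The hard part will be the converse, and specifically the construction and control of the rigid Mukai bundle $\mathcal E$: proving its existence, stability, global generation and the vanishing $h^1(\mathcal E)=h^2(\mathcal E)=0$ on a general (rather than very general) $S$ is exactly where Mukai's theory of moduli of sheaves on $K3$ surfaces is indispensable, and is far less formal than the forward direction.
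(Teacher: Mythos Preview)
The paper does not give a proof of this theorem: it is simply quoted from Mukai \cite{Muk0} as an input, and the text immediately moves on to use it in proposition \ref{vanish}. So there is nothing in the paper for your argument to be compared against.

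That said, your outline is a faithful sketch of Mukai's original proof. The forward direction (adjunction for $K_S$, Koszul plus Bott vanishing for $h^1(\OO_S)=0$, and the Chern class computation $c_4(E)\cdot H^2=2H^6$ giving degree $10$ since $\deg G(2,5)=5$) is entirely correct. For the converse you have identified the right mechanism: the rigid rank--$2$ bundle with Mukai vector $(2,H,3)$, its $5$--dimensional space of sections giving the map to $G(2,5)$, and the comparison with the primitive embedding $S\hookrightarrow\PP^6$ forcing the image into a $V_5$ and then into a quadric section of it. You are also right that the substantive work lies in the existence, stability, global generation and cohomology vanishing for $\mathcal E$; these are precisely the points Mukai's Brill--Noether theory for $K3$ surfaces handles, and your write--up would need to either reproduce or cite those arguments to be complete. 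One small addition you might want for the final step: beyond the degree equality $\deg S=2\deg V_5$, one typically checks $h^0(\mathcal I_{S/V_5}(2))\not=0$ (e.g.\ via the restriction sequence and $h^0(\OO_S(2H))=22$ versus $h^0(\OO_{V_5}(2))=23$) to actually produce the quadric cutting out $S$ in $V_5$.
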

 
 This result can be exploited as follows:
 
  \begin{proposition}[Voisin \cite{V0}]\label{vanish} Let $\Ss\to B$ be the universal family of degree $10$ $K3$ surfaces (i.e., $B$ is a Zariski open in a product of projective spaces parametrizing sections of $\OO_G(1)^{\oplus 3}\oplus \OO_G(2)$ that are smooth). 
  We have
   \[ A^2_{hom}(\Ss\times_B \Ss)=0\ .\]
   \end{proposition}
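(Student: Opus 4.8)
The plan is to use the Mukai model (theorem \ref{muk}) to realise $\Ss\times_B\Ss$ as an open subset of a projective bundle over $G\times G$ away from the diagonal, and thereby to show that every class in $A^2(\Ss\times_B\Ss)$ is ``tautological'', i.e. detected by the cycle class map. Write $\mathcal{E}:=\OO_G(1)^{\oplus 3}\oplus\OO_G(2)$, so that $B$ is an open subset of a product $\PP$ of projective spaces parametrising the defining sections and $\Ss\subset G\times B$ is the universal zero locus of $\mathcal{E}$. Pulling the universal section back along the two projections $G\times G\to G$ exhibits $\Ss\times_B\Ss$ as the common zero locus inside $G\times G\times B$ of the two resulting sections; since $B$ lies in the smooth locus, $\Ss\times_B\Ss$ is itself smooth and $A^2_{hom}(\Ss\times_B\Ss)$ is well defined.

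First I would stratify $G\times G=U\sqcup\Delta_G$, with $U:=(G\times G)\setminus\Delta_G$, and set $\UU:=(\Ss\times_B\Ss)\vert_U$ and $\DD:=(\Ss\times_B\Ss)\vert_{\Delta_G}$. Over $U$, the fibre of $\Ss\times_B\Ss\to G\times G$ above $(x,y)$ is $\{b\in B:s_b(x)=s_b(y)=0\}$, an open subset of a product of linear subspaces of $\PP$ cut out by the two evaluations. Because $\OO_G(1)$ is very ample (the Plücker embedding) and $\OO_G(2)$ likewise, the evaluation map $H^0(G,\mathcal{E})\to\mathcal{E}_x\oplus\mathcal{E}_y$ is surjective of constant rank $2\,\mathrm{rk}\,\mathcal{E}=8$ for every $(x,y)\in U$. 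Hence these fibres assemble into a Zariski--locally trivial fibration $\UU\to U$ whose fibres are open in a product of projective spaces. The projective bundle formula, applied to each projective factor, then shows that $A^\ast(\UU)$ is generated over $A^\ast(U)$ by the relative hyperplane classes. Since $A^\ast(U)$ is a quotient of $A^\ast(G\times G)=H^\ast(G\times G)$, and the relative hyperplane classes are restrictions of algebraic (hence cohomologically detected) classes, a comparison with the parallel decomposition in cohomology gives injectivity of the cycle class map on $A^2(\UU)$; in particular $A^2_{hom}(\UU)=0$.

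It remains to feed the diagonal stratum back in. A dimension count shows $\DD\cong\Ss$ (the two conditions coincide on $\Delta_G$) has codimension $2$ in $\Ss\times_B\Ss$, so the localization sequence reads $A^0(\DD)\xrightarrow{i_\ast}A^2(\Ss\times_B\Ss)\to A^2(\UU)\to 0$, with $A^0(\DD)=\QQ\cdot[\DD]$ generated by the homologically nonzero fundamental class. Comparing this with its cohomological counterpart and using the injectivity on $A^2(\UU)$ already obtained, a short diagram chase yields that the cycle class map is injective on $A^2(\Ss\times_B\Ss)$, i.e. $A^2_{hom}(\Ss\times_B\Ss)=0$. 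The main obstacle is the geometric input of the second paragraph: establishing that the evaluation map has constant rank over all of $U$ (so that $\UU$ is genuinely an open subset of a projective bundle) and controlling the behaviour along and near the diagonal --- this is precisely where the very ampleness of $\mathcal{E}=\OO_G(1)^{\oplus 3}\oplus\OO_G(2)$, furnished by the Mukai model, is indispensable. The remaining steps are routine localization and projective bundle bookkeeping, once one knows that $A^\ast$ and $H^\ast$ coincide on $G\times G$ and on products of projective spaces.
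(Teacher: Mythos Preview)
Your overall strategy --- stratifying by the diagonal in $G\times G$ and exploiting the projective--bundle structure of the incidence variety --- is indeed the skeleton of Voisin's argument in \cite[Proposition~3.13]{V0}, which the paper simply invokes. But there is a genuine gap in your second paragraph, and it is not the one you flag at the end.

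The problem is your claim that the fibres $\{b\in B: s_b(x)=s_b(y)=0\}$ ``assemble into a Zariski--locally trivial fibration $\UU\to U$'' to which the projective bundle formula applies. Over the full parameter space $\bar B$ (your $\PP$), the incidence variety
\[
  \mathcal P:=\bigl\{(x,y,\bar b)\in U\times\bar B : s_{\bar b}(x)=s_{\bar b}(y)=0\bigr\}
\]
is an honest fibre product of projective sub--bundles of the trivial bundles $U\times\PP^{N_i}$, and there the projective bundle formula gives $A^\ast_{hom}(\mathcal P)=0$. But $\UU=\mathcal P\cap(U\times B)$ is obtained by removing the preimage of the discriminant $\bar B\setminus B$. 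Since the linear subspace $L_{x,y}\subset\bar B$ moves with $(x,y)$ while the discriminant is fixed, the fibres $L_{x,y}\cap B$ are \emph{different} opens in projective spaces for different $(x,y)$, and $\UU\to U$ is not locally trivial. Knowing that $A^\ast(\UU)$ is a quotient of $A^\ast(\mathcal P)$ (hence generated by tautological classes) does not give injectivity of the cycle class map: a class $\alpha\in A^2_{hom}(\UU)$ lifts to some $\tilde\alpha\in A^2(\mathcal P)$ whose cohomology class restricts to zero on $\UU$, so $cl(\tilde\alpha)$ lies in the image of $H^{BM}_\ast(\mathcal P\setminus\UU)$; to kill $\alpha$ you must show this Borel--Moore class on the complement is in the image of $A_\ast(\mathcal P\setminus\UU)$, which is not automatic.

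This missing step is exactly where the hypothesis ``codimension~$2$'' does real work. Voisin's general result \cite[Proposition~3.13]{V0} is conditional on a Hodge--conjecture--type statement \cite[Conjecture~1.6]{V0} controlling precisely such boundary contributions; in codimension~$2$ the relevant classes on (a resolution of) $\mathcal P\setminus\UU$ live in $H^2$, and Lefschetz $(1,1)$ supplies the required algebraicity, as explained in \cite[Section~3.3]{V0}. So the ``main obstacle'' is not the constant--rank surjectivity of the evaluation map over $U$ (that part is indeed routine from very ampleness), but the restriction from $\bar B$ to $B$. Your write--up should either cite Voisin's result as the paper does, or first run the argument over $\bar B$ and then supply the Lefschetz--$(1,1)$ step when passing to the open $B$.
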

   
   \begin{proof} The family $\Ss\to B$ is the family of smooth complete intersections $S_b\subset G$ defined by the very ample line bundles $\OO_G(1)$ ($3$ copies) and $\OO_G(2)$. The Grassmannian $G$ has trivial Chow groups. The result is thus a special case of \cite[Proposition 3.13]{V0} (NB: as explained in \cite[Section 3.3]{V0}, the hypothesis that \cite[Conjecture 1.6]{V0} holds is satisfied in codimension $2$, and so the result is unconditional in codimension $2$). 
   \end{proof}

 \subsection{EPW sextics}

 \begin{definition}[\cite{EPW}] Let $A\subset \wedge^3 \C^6$ be a subspace which is Lagrangian with respect to the symplectic form on $\wedge^3 \C^6$ given by the wedge product. The {\em EPW sextic associated to $A$\/} is
   \[ Y_A:= \Bigl\{  [v]\in \PP(\C^6)\ \vert\ \dim \bigl( A\cap ( v\wedge \wedge^2 \C^6)\bigr) \ge 1\Bigr\}\ \ \subset \PP(\C^6)\  .\]
 An {\em EPW sextic\/} is an $Y_A$ for some $A\subset \wedge^3 \C^6$ Lagrangian.  
 \end{definition}
 
 \begin{proposition}[O'Grady \cite{OG2}]\label{og1} Let $X=S^{[2]}$ where $S$ is a very general degree $10$ $K3$ surface. There exists a non--trivial birational involution
   \[ \iota\colon\ \ X\ \dashrightarrow\ X\ .\]
   There exists a $\iota$--invariant divisor $D\subset X$ (of Beauville--Bogomolov square $2$), such that the action of $\iota$ on the N\'eron--Severi group $NS(X)$ is given by reflection in the span of $D$.
   \end{proposition}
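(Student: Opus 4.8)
The plan is to build $\iota$ as the covering involution of the double EPW sextic associated to $S$, transported to $X=S^{[2]}$ through a birational equivalence, and then to read off its action on $NS(X)$ from the geometry of that double cover. The starting point is Mukai's model (theorem \ref{muk}): a very general degree $10$ $K3$ surface is a complete intersection $S\subset G(2,5)$, and it is this extra structure that lets one attach to $S$ a Lagrangian $A\subset\wedge^3\C^6$, hence an EPW sextic $Y_A\subset\PP^5$ together with its natural double cover $f\colon X^\prime_A\to Y_A$. The single deepest input, which I would take wholesale from \cite{OG2}, is that for very general $S$ the fourfold $X^\prime_A$ is hyperk\"ahler and birational to $X$; reproving this is out of scope, as it is the heart of O'Grady's construction.

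Granting the birational equivalence $X\dashrightarrow X^\prime_A$, the involution is essentially free. The double cover $f$ carries a regular deck involution $\iota^\prime\in\aut(X^\prime_A)$, and transporting it along the birational map yields a birational involution $\iota\in\bir(X)$ (recall that birational hyperk\"ahler fourfolds have canonically isomorphic $H^2$, compatibly with the Beauville--Bogomolov form and the Hodge structure, so all cohomological statements may be checked on $X^\prime_A$). The key structural fact is that rationally the invariant cohomology of a double cover is the cohomology of the base: $H^2(X^\prime_A)^{\iota^\prime}=f^\ast H^2(Y_A)$. Since $Y_A$ is a sextic hypersurface in $\PP^5$ with $H^2(Y_A)$ one--dimensional (spanned by the hyperplane class $H$), the invariant part of $H^2(X)$ is the single line $\QQ D$, where $D:=f^\ast H$. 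An isometric involution with one--dimensional invariant space acts as $+\ide$ on $\QQ D$ and as $-\ide$ on the orthogonal complement $D^\perp$; in particular it reverses the holomorphic symplectic form $\omega\in D^\perp\otimes\C$, so $\iota$ is anti--symplectic, and it is non--trivial because $\omega$ is not fixed.

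It remains to identify $D$ and exhibit the reflection. The map $\phi\colon X\dashrightarrow Y_A\subset\PP^5$ is given by the complete linear system $|D|$; consistently, Riemann--Roch on $K3^{[2]}$ gives $\chi(\OO_X(D))=\binom{q(D)/2+3}{2}=6$ once $q(D)=2$, matching the six coordinates of $\PP^5$. For very general $S$ one has $NS(X)=\ZZ L\oplus\ZZ\delta$ with $q(L)=10$, $q(\delta)=-2$, $\langle L,\delta\rangle=0$ and $2\delta$ the Hilbert--Chow exceptional divisor; the class $D=L+2\delta$ has $q(D)=10-8=2$ and, being $\iota$--invariant, spans the rank--one invariant part found above. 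Restricting the eigenvalue description ($\iota^\ast=+\ide$ on $\QQ D$ and $\iota^\ast=-\ide$ on $D^\perp$) to the rank--two lattice $NS(X)$ shows that $\iota^\ast$ fixes $\langle D\rangle$ and negates $D^\perp\cap NS(X)$; that is, $\iota^\ast$ is precisely the reflection in the span of $D$, and a one--line computation recovers its integral matrix in the basis $L,\delta$.

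The main obstacle is concentrated entirely in the first step: the identification of $S^{[2]}$ with a double EPW sextic and the canonical passage from the Mukai model of $S$ to the Lagrangian $A$. This is genuinely hard and is exactly O'Grady's theorem. Everything downstream---anti--symplecticity, non--triviality and the explicit reflection---then follows from the elementary cohomology of double covers, the fact that $b_2(Y_A)=1$, and a short computation on the rank--two lattice $NS(X)$. A minor point I would still verify is which of $L\pm 2\delta$ is the effective, base--point--free representative defining $\phi$, though this does not affect the conclusion that $\iota^\ast$ acts as the reflection fixing $\langle D\rangle$.
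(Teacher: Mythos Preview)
Your approach is valid in outline but genuinely different from the paper's. The paper does \emph{not} go through the double EPW sextic to construct $\iota$; instead it gives a direct, elementary geometric description on $X=S^{[2]}$ itself. Using Mukai's model, $S$ is realised as a quadratic section $S=V_5\cap Q$ of the del Pezzo threefold $V_5=G(2,5)\cap\PP^6$. A general unordered pair $(x,y)\in X$ lies on a unique conic $q_{x,y}\subset V_5$, and since $S$ is cut out by a quadric this conic meets $S$ in two further points $(x^\prime,y^\prime)$; one sets $\iota(x,y):=(x^\prime,y^\prime)$. The EPW sextic interpretation (your starting point) is treated \emph{afterwards}, in theorem~\ref{og2}, as a consequence.

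What each approach buys: the paper's conic construction is elementary, avoids any black box, and---crucially for the main theorem~\ref{main}---relativises immediately over the moduli base $B$, yielding a global birational involution of the family $\XX\to B$. Your route packages the hard work into the identification of $S^{[2]}$ with (a resolution of) a double EPW sextic; once granted, the cohomological statements (anti--symplecticity, the reflection on $NS(X)$) drop out cleanly from $b_2(Y_A)=1$, which is a nice conceptual explanation. One technical point to tidy: for $S$ of degree $10$ the Lagrangian $A$ lands in the locus $\Delta$, so the natural double cover $X_A$ is \emph{singular} (a single node), and it is only the small resolution $X_A^\epsilon$ that is hyperk\"ahler and isomorphic to a Hilbert square; the covering involution is regular on $X_A$ but only birational on $X_A^\epsilon$. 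This is easily patched, but your sentence ``the fourfold $X^\prime_A$ is hyperk\"ahler'' should be adjusted accordingly.
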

   
   \begin{proof} This is \cite[Section 4.3]{OG2} (cf. also \cite[Section 3.1]{IM}). The idea of the construction of $\iota$ is as follows. Using Mukai's work (theorem \ref{muk}), the $K3$ surface $S$ can be realized as a quadratic section $S=V_5\cap Q$ of the del Pezzo threefold $V_5:=G\cap \PP^6$. Hence, a general unordered pair of points on $S$ gives a general unordered pair of points $(x,y)$ on $V_5$. One checks (by a dimension count) that there is a unique conic $q=q_{x,y}\subset V_5$ passing through the pair of points $(x,y)$. Since 
   $S=V_5\cap Q$ is a quadratic section, the conic $q$ meets $S$ in $x,y$ plus $2$ other points $x^\prime, y^\prime$. The involution is defined by this residual intersection, i.e.
   \[ \iota(x,y):= (x^\prime,y^\prime)\ \ \ \in X\ .\]  
     \end{proof}
 
 \begin{theorem}[O'Grady \cite{OG}]\label{og2} Let $X$ and $\iota\in\bir(X)$ be as in proposition \ref{og1}. There exists a 
 hyperk\"ahler fourfold $X^\prime$ birational to $X$, and a generically $2:1$ morphism $p\colon X^\prime\to Y$ to an EPW sextic $Y$.
 
 Moreover, let $\iota^\prime\in\bir(X^\prime)$ be the birational involution induced by $\iota$. Then $Y\subset\PP^5$ is the closure of the quotient $U^\prime/\iota^\prime$, where $U^\prime\subset X^\prime$ is the open on which the involution $\iota^\prime$ is defined.
 \end{theorem}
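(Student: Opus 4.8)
The plan is to exhibit $X^\prime$ as O'Grady's double cover of the EPW sextic $Y$ and then to match its covering involution with $\iota$; once this is done the statement $Y=\overline{U^\prime/\iota^\prime}$ is essentially formal, so the real content lies in the construction and in the identification of the two involutions. First I would recall the double EPW construction. Fix $V=\C^6$, equip $\wedge^3 V$ with the symplectic form given by the wedge product $\wedge^3 V\times\wedge^3 V\to\wedge^6 V\cong\C$, and take a Lagrangian $A\subset\wedge^3 V$. Attached to $A$ is the sextic $Y_A$ together with its stratification by the jump of $\dim\bigl(A\cap(v\wedge\wedge^2 V)\bigr)$, and O'Grady's sheaf $\zeta_A$ on $Y_A$, which (away from the deepest stratum) defines a sheaf of algebras and hence a canonical double cover $f_A\colon\wt{Y}_A\to Y_A$ branched along $Y_A[2]$, with covering involution $\theta_A$. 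For $A$ general (no decomposable vectors, deepest stratum empty), O'Grady proves $\wt{Y}_A$ is a smooth hyperk\"ahler fourfold deformation equivalent to $S^{[2]}$. This yields the candidate $X^\prime=\wt{Y}_A$, the generically $2:1$ morphism $p=f_A$, and $Y=Y_A$, with $\theta_A$ a \emph{biregular} involution of $X^\prime$.

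Next I would produce the Lagrangian $A$ from the $K3$ geometry of $S$. Using Mukai's model (Theorem \ref{muk}), $S=V_5\cap Q$ with $V_5=G\cap\PP^6$ the quintic del Pezzo threefold. The pair $(V_5,Q)$ determines, through the Iliev--Manivel correspondence for degree--$10$ data (cf. \cite{IM}), a Lagrangian $A=A(S)\subset\wedge^3 V$ and hence the EPW sextic $Y_A$. Here I would check that for $S$ very general the associated $A$ is generic in O'Grady's sense, so that the deformation-type and double-cover statements of the first step apply.

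The geometric bridge between $\wt{Y}_A$ and $S^{[2]}$ is the conic construction underlying $\iota$ (Proposition \ref{og1}). A very general point of $S^{[2]}$ is an unordered pair $\{x,y\}$; the unique conic $q=q_{x,y}\subset V_5$ through it meets $S=V_5\cap Q$ in four points $\{x,y,x^\prime,y^\prime\}$, and $\iota$ exchanges the two residual pairs $\{x,y\}$ and $\{x^\prime,y^\prime\}$, both of which lie on the same $q$. I would use this to build a birational map $\beta\colon S^{[2]}\dashrightarrow\wt{Y}_A$ sending $\{x,y\}$ to the sheet of $f_A$ determined by that pair, arranged so that the two sheets of $f_A$ over a general point of $Y_A$ are precisely $\{x,y\}$ and $\{x^\prime,y^\prime\}$. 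By construction $\beta\circ\iota=\theta_A\circ\beta$, so the induced $\iota^\prime=\beta_\ast\iota$ coincides with the covering involution $\theta_A$ on a dense open $U^\prime$. Since $Y_A=\wt{Y}_A/\theta_A$ as a quotient by the very construction of the double cover, and $\theta_A=\iota^\prime$ on $U^\prime$, this gives $Y=\overline{U^\prime/\iota^\prime}$.

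The main obstacle is this last step: making $\beta$ an honest birational map with controlled indeterminacy and proving $\theta_A=\iota^\prime$. The difficulty is that the ``unique conic through a pair'' degenerates over the strata $Y_A[2]\supset Y_A[3]$ (reducible conics, lines with embedded structure), so that both $\beta$ and $\iota$ acquire indeterminacy exactly there; one must verify they share a dense open of definition and, crucially, that the genuine two-to-one fibre of $f_A$ is cut out by the \emph{residual}-pair relation rather than collapsing the three pairings of the four points on $q$ into something larger. This bookkeeping of the degeneration loci, together with the check that the sheet-swap of $f_A$ is the residual involution and not some other pairing, is where the essential geometric content — and O'Grady's real work — resides.
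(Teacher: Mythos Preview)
Your approach has a genuine gap at the step where you assert that ``for $S$ very general the associated $A$ is generic in O'Grady's sense''. This is precisely what fails: the Lagrangians arising from degree--$10$ $K3$ surfaces form a $19$--dimensional locus inside the $20$--dimensional moduli of Lagrangians, and they lie on the divisor denoted $\Delta$ in \cite{OG3}. Concretely, for such $A$ the deepest stratum $Y_A[3]$ is \emph{not} empty but consists of a single point $v_0$, and O'Grady's canonical double cover $X_A\to Y_A$ is \emph{singular}, with a unique singular point $p_0$ lying over $v_0$. So the smooth hyperk\"ahler $X^\prime$ cannot be taken to be the double cover itself, and your candidate $X^\prime=\wt{Y}_A$ with its biregular covering involution $\theta_A$ is not available.

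The paper's argument (following \cite[Theorem 4.15]{OG3}) instead takes $X^\prime:=X_A^\epsilon$ to be a small resolution of the singular $X_A$, with exceptional locus $E\cong\PP^2$ over $p_0$; O'Grady proves this $X_A^\epsilon$ is isomorphic to the Hilbert square of a $K3$ surface (denoted $S_A(v_0)$). The covering involution $\iota_A$ is a \emph{biregular} automorphism of the singular variety $X_A$ with $Y=X_A/\iota_A$, but on the resolution $X^\prime$ it only induces the \emph{birational} involution $\iota^\prime$ (it need not lift across the small resolution). This is why the statement merely asserts $\iota^\prime\in\bir(X^\prime)$ and why the ``moreover'' clause is deduced via the quotient $X_A/\iota_A$ rather than by a quotient of $X^\prime$ itself. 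Your proposed identification of $\iota^\prime$ with a biregular covering involution of a smooth double cover therefore does not match the actual geometry of the degree--$10$ case.
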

 
 \begin{proof} This is contained in \cite[Theorem 4.15]{OG3}. The idea is that there is a generically $2:1$ rational map $X\dashrightarrow Y_A$ to an EPW sextic with $A\in\Delta$ in the notation of loc. cit. For $S$ very general, the subspace $A$ will be generic in $\Delta$ and thus $Y_A[3]$ will consist of a single point $v_0$. Let $X_A\to Y_A$ be the singular double cover of the EPW sextic as in loc. cit. According to \cite[Theorem 4.15]{OG3}, $X_A$ has one singular point $p_0$ (lying over $v_0\in Y_A$), and there exists a small resolution $s\colon X_A^\epsilon\to X_A$ with exceptional locus $E:=s^{-1}(p_0)$ isomorphic to $\PP^2$, and such that $X_A^\epsilon$ is isomorphic to the Hilbert square of a certain $K3$ surface (the $K3$ surface denoted $S_A(v_0)$ in loc. cit.). We define $X^\prime:=X_A^\epsilon$ and $Y:=Y_A$. 
 
 The singular variety $X_A$ has an involution $\iota_A\in\aut(X_A)$ (coinciding with $\iota\in\bir(X)$ on an open) such that 
 $Y=X_A/\iota_A$. Since $X_A^\epsilon\to X_A$ is birational, this proves the ``moreover'' statement. 
 \end{proof}

\section{Main result}

 \begin{theorem}\label{main} Let $X$ be the Hilbert scheme $S^{[2]}$, where $S$ is a very general $K3$ surface of degree $d=10$. Let $\iota\in\bir(X)$ be the anti--symplectic involution of proposition \ref{og1}. Then
       \[    \begin{split}  \iota^\ast=\ide\colon&\ \ \ A^4_{(0)}(X)\ \to\ A^4_{(0)}(X)\ ;\\                                  
                         \iota^\ast =-\ide\colon&\ \ \ A^4_{(2)}(X)\ \to\ A^4_{(2)}(X)\ ;\\
                             (\Pi_2^X)_\ast \iota^\ast=-\ide \colon& \ \ \ A^2_{(2)}(X)\ \to\ A^2_{(2)}(X)\ ;\\           
                                         (\Pi_4^X)_\ast      \iota^\ast =\ide\colon&\ \ \ A^4_{(4)}(X)\ \to\ A^4_{(4)}(X)\ .\\
                          \end{split}\]
  
   \end{theorem}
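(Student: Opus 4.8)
The plan is to exploit the birational model $X'$ (the double EPW sextic) and the finite quotient map it carries, together with the ``spread'' philosophy of Voisin applied to the universal family over the base $B$ of Mukai-model $K3$ surfaces. The four statements split naturally into two groups: the two ``algebraic'' statements on $A^4_{(0)}$ and $A^4_{(2)}$, which I expect to follow from Hodge-theoretic / Néron--Severi considerations, and the two ``transcendental'' statements on $A^2_{(2)}$ and $A^4_{(4)}$, which are the heart of the matter.

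First I would dispose of $A^4_{(0)}(X)$ and $A^4_{(2)}(X)$. The piece $A^4_{(0)}(X)$ is spanned by a distinguished class (a power of the class $\ell\in A^2_{(0)}(X)$ of theorem \ref{mult}(\rom2)), and since $\iota$ is a birational involution of a hyperk\"ahler variety it preserves this canonical zero-cycle / top self-intersection structure; hence $\iota^\ast=\ide$ there essentially by dimension count. For $A^4_{(2)}(X)$, I would use the isomorphism $\cdot\,\ell\colon A^2_{(2)}(X)\xrightarrow{\cong}A^4_{(2)}(X)$ of theorem \ref{mult}(\rom2), so that the statement $\iota^\ast=-\ide$ on $A^4_{(2)}$ is equivalent to $\iota^\ast=-\ide$ on $A^2_{(2)}$ modulo the compatibility of $\iota^\ast$ with $\cdot\,\ell$ (using that $\iota$ fixes $\ell$, which lies in the invariant part by proposition \ref{og1}). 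Thus the whole theorem reduces to the single transcendental statement $(\Pi_2^X)_\ast\iota^\ast=-\ide$ on $A^2_{(2)}(X)$, together with the surjectivity $A^2_{(2)}\otimes A^2_{(2)}\twoheadrightarrow A^4_{(4)}$ of theorem \ref{mult}(\rom1), which propagates the sign: if $\iota^\ast=-\ide$ on $A^2_{(2)}$ then on products of two such classes $\iota^\ast=(-1)^2=\ide$, giving the last line on $A^4_{(4)}$.

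The core is therefore to show $\iota^\ast$ acts as $-\ide$ on $A^2_{(2)}(X)=A^2_{AJ}(X)$. Here I would transport the problem to $S^2$ via the correspondence $\Psi$ of lemma \ref{compat}: since $({}^t\Psi)_\ast$ and $\Psi_\ast$ respect the pieces $A^2_{(2)}$, it suffices to understand the induced action on $A^2_{(2)}(S^2)$, which by proposition \ref{prod2} is cut out by an explicit relative projector built from $\Theta_1,\Theta_2,\Xi_1,\Xi_2$. The action of $\iota$ on $H^2(X)$ is anti-symplectic and acts as $-1$ on the transcendental lattice $H^2_{tr}(X)$ (it is a reflection in $NS$ by proposition \ref{og1}), so on the transcendental motive $h^2_{tr}$ the correspondence $\Gamma_\iota+\Delta$ is homologically trivial. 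The plan is then a spread argument: work over the universal family $\Ss\to B$ of proposition \ref{vanish}, form the relative self-correspondence $\Gamma_\iota+\Delta$ (restricted to the relevant transcendental piece via the relative projectors of propositions \ref{prod2} and \ref{relmck}), and observe that it is fibrewise homologically trivial; by the vanishing $A^2_{hom}(\Ss\times_B\Ss)=0$ of proposition \ref{vanish}, such a relatively homologically trivial codimension-$2$ cycle is \emph{relatively rationally trivial}, hence acts as $0$ on $A^2_{(2)}$ of the very general fibre. This yields $\iota^\ast=-\ide$ on $A^2_{(2)}(X)$.

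The main obstacle is the spread step. Two technical points demand care. First, $\iota$ is only a \emph{birational} involution, so $\Gamma_\iota$ is the closure of the graph over the open locus of definition, and I must check that the correction terms supported over the indeterminacy locus (and over the exceptional $\PP^2$ appearing in theorem \ref{og2}) do not contribute to the action on $A^2_{(2)}$; passing to the double-EPW model $X'$ where $\iota'$ is a genuine automorphism on a large open, and using that $A^2_{(2)}$ is a birational invariant of hyperk\"ahler fourfolds (lemma \ref{hk} and Rie\ss's theorem), should neutralize this. Second, and more seriously, to invoke proposition \ref{vanish} I must arrange that the whole construction — the involution $\iota$, the projectors, and the resulting self-correspondence — genuinely \emph{extends as a relative cycle over $B$}, i.e.\ that the O'Grady involution exists uniformly in the family and that the homological triviality holds fibrewise over all of $B$ rather than just at the very general point. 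Establishing this relative, fibrewise algebraicity and homological triviality of $(\Gamma_\iota+\Delta)$ restricted to the transcendental part is where the real work lies; once it is in place, the vanishing of $A^2_{hom}(\Ss\times_B\Ss)$ does the rest.
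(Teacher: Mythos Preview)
Your overall architecture is sound and matches the paper's: the spread argument over the Mukai family $\Ss\to B$, combined with proposition~\ref{vanish}, is exactly how the paper establishes the $A^2_{(2)}$ statement, and the propagation to $A^4_{(4)}$ via theorem~\ref{mult}(\rom1) is also the paper's route.

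There is, however, a genuine gap in your reduction of the $A^4_{(0)}$ and $A^4_{(2)}$ statements. You write that ``$\iota$ fixes $l$, which lies in the invariant part by proposition~\ref{og1}''. But proposition~\ref{og1} only describes the action of $\iota$ on $NS(X)\subset A^1(X)$; the class $l$ lives in $A^2_{(0)}(X)$, and nothing in that proposition controls how $\iota$ acts on it. In the paper this is a separate and substantial result (proposition~\ref{onl}): one first shows $\iota^\ast(l)=\pm l$ in cohomology via a uniqueness property of the Beauville--Bogomolov class $L\in H^4(X\times X)$ (namely, $(\iota\times\iota)^\ast L$ satisfies the same quadratic relation as $L$, forcing $(\iota\times\iota)^\ast L=\pm L$ in $H^\ast$ and hence $\iota^\ast(l)=\pm l$ in $H^4$), and then one runs a \emph{second} spread argument over $B$ to upgrade this to rational equivalence. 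Only after that, and after a contradiction argument to exclude the minus sign, does one get $\iota^\ast(l)=l$ in $A^2(X)$, from which $\iota^\ast(l^2)=l^2$ and the $A^4_{(0)}$, $A^4_{(2)}$ statements follow. Your ``dimension count'' for $A^4_{(0)}$ is likewise insufficient: degree considerations only pin down the $A^4_{(0)}$--component of $\iota^\ast(l^2)$, not that $\iota^\ast(l^2)=l^2$ on the nose in $A^4(X)$.

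A smaller point: in deducing the $A^4_{(4)}$ statement you implicitly use $\iota^\ast(a_1\cdot a_2)=\iota^\ast(a_1)\cdot\iota^\ast(a_2)$. Since $\iota$ is only birational this is not automatic; the paper invokes \cite[Proposition~B.6]{SV} (valid because $a_i\in A^2_{AJ}$). Note also that the $A^2_{(2)}$ statement actually established is only $(\Pi_2^X)_\ast\iota^\ast=-\ide$, so $\iota^\ast(a_i)=-a_i+r_i$ with $r_i\in A^2_{(0)}\cap A^2_{hom}$; the cross--terms $r_i\cdot a_j$ land in $A^4_{(2)}$, which is why the conclusion on $A^4_{(4)}$ reads $(\Pi_4^X)_\ast\iota^\ast=\ide$ rather than $\iota^\ast=\ide$.
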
   
   
\begin{proof} We first prove the statement for $A^2_{(2)}(X)$. The statements for $A^4_{(2)}(X)$ and for $A^4_{(4)}(X)$ will be deduced from the statement for $A^2_{(2)}(X)$ using theorem \ref{mult}.

We consider the universal family
  \[ \Ss\ \to\ B \]
  of all smooth degree $10$ $K3$ surfaces $S_b$. Here the base $B$ is a Zariski--open in a product of projective spaces 
   \[   B\subset \bar{B}:=\PP H^0\bigl(G,\OO(1))  \bigr){}^{\times 3}\times \PP H^0\bigl( G,\OO(3)) \bigr)   \ ,\]
   corresponding to theorem \ref{muk}.
   
  We will write
  \[ \XX\ \to\ B \]
  for the universal family of Hilbert squares of degree $10$ $K3$ surfaces, and $X_b$ for a fibre of $\XX\to B$ over $b\in B$. This family is obtained from the family $\Ss\times_B \Ss$ (whose fibres are products $S_b\times S_b$) by a ``hat'' of morphisms over $B$
    \begin{equation}\label{hat} \begin{array}[c]{ccccc}
        && \wt{\Ss\times_B \Ss}&&\\
        &\swarrow&&\ \ \ \searrow&\\
        \XX &&&& \Ss\times_B \Ss\\
        \end{array}\end{equation}
        where $\wt{\Ss\times_B \Ss}$ is the blow--up of $\Ss\times_B \Ss$ with centre the relative diagonal, and the southwest arrow is the quotient morphism for the natural action of the symmetric group on $2$ elements. This diagram (\ref{hat}) gives rise to relative correspondences
        \[ \Psi\in A^4(\XX\times_B \Ss\times_B \Ss)\ ,\ \ \   {}^t \Psi\in A^4( \Ss\times_B \Ss\times_B \XX)  \ .\]
        (For details on relative correspondences, cf. \cite{MNP}, and also \cite{DM}, \cite{CH}, \cite{NS}.) Restricting to a fibre over $b\in B$, diagram (\ref{hat}) induces the familiar diagram
        \[ 
        \begin{array}[c]{ccccc}
        && \wt{S_b\times S_b}&&\\
        &\swarrow&&\ \ \ \searrow&\\
        X_b=(S_b)^{[2]} &&  && S_b\times S_b\\
        \end{array}    \]    
        (where $\wt{S_b\times S_b}$ is the blow--up of $S_b\times S_b$ along the diagonal),
    and the (absolute) correspondences 
    \[ \Psi_b\in A^4(X_b\times S_b\times S_b)\ ,\ \ {}^t \Psi_b\in A^4(S_b\times S_b\times X_b)\ .\]
      
 Since the construction of the birational involution $\iota_b\in\bir(X_b)$ of proposition \ref{og1} is geometric in nature, it naturally extends to the relative setting. More precisely,
 let 
   \[ \VV\ \to\ B^\prime\ \to\ B \]
   denote the family of smooth codimension $3$ linear sections of the Grassmannian $G=G(2,5)$ of lines in $\PP^4$ 
   (so $B^\prime$ is an open in $(\PP H^0(G,\OO_G(1)))^{\times 3}$, and each fibre $V_b$ of the family $\VV\to B$ is the del Pezzo threefold usually denoted $V_5$). Let $\FF\to B$ denote the family of Fano varieties of conics contained in $V_b$ (so the family $\FF\to B$ is isotrivial with fibre $F(V_5)$ according to the previous parenthesis).
   Associating to a general unordered pair of $2$ points on $S_b$ the unique conic in $V_b$ containing this pair of points defines a rational map of $B$--schemes
   \[  \XX\ \dashrightarrow\ \FF\ .\]
  Taking the residual intersection of the conic with the surface $S_b$, we get a birational involution
   of $B$--schemes
  \[ \iota\colon\ \ \XX\ \to\ \XX\ ,\]
  such that restriction to a fibre gives the birational involution $\iota_b\colon X_b\to X_b$ of proposition \ref{og1}.
  
  Let $\Gamma_\iota\in A^4(\XX\times_B \XX)$ denote the closure of the graph of the birational map $\iota$. The fact that $\iota_b$ acts as $-1$ on $H^{2,0}(X_b)$ for all $b\in B$, combined with the fact that $H^2_{tr}(X_b)\subset H^2(X_b)$ is the smallest Hodge substructure containing $H^{2,0}$, implies that
  \[ ({}^t \Gamma_{\iota_b}+\Delta_{X_b})\circ (\pi_{2,tr}^{X_b}) =0\ \ \ \hbox{in}\ H^8(X_b\times X_b)\ ,\ \ \ \forall b\in B\ .\]
 In view of the refined Chow--K\"unneth decomposition (theorem \ref{pi_2}), this implies that
  \begin{equation}\label{rel} ({}^t \Gamma_{\iota_b}+\Delta_{X_b})\circ (\pi_{2}^{X_b}) =\gamma_b\ \ \ \hbox{in}\ H^8(X_b\times X_b)\ ,\ \ \ \forall b\in B\ ,\end{equation}
  where $\gamma_b$ is some cycle supported on $Y_b\times Y_b$, for $Y_b\subset X_b$ a divisor.
  
  
  Let $\{\Pi^\XX_j\}$ be a relative MCK decomposition as in proposition \ref{relmck}. The relation (\ref{rel}) implies the following: the relative correspondence
  \[  \Gamma_0:= ({}^t \Gamma_{\iota}+\Delta_{\XX})\circ \Pi^\XX_{2}  \ \ \in A^4(\XX\times_B \XX) \]
  has the property that for each $b\in B$, there exists a divisor $Y_b\subset X_b$ and a cycle $\gamma_b$ supported on $Y_b\times Y_b$ such that
   \[ (\Gamma_0)\vert_{X_b\times X_b}=\gamma_b\ \ \ \hbox{in}\ H^8(X_b\times X_b)\ .\]  
   
  At this point, we recall  Voisin's ``spread--out'' result:
  
  \begin{proposition}[Voisin \cite{V0}]\label{spread} Let $\XX\to B$ be a smooth projective morphism of relative dimension $n$. Let $\Gamma\in A^n(\XX\times_B \XX)$ be a cycle such that
    for all $b\in B$, there exists a closed algebraic subset $Y_b\subset X_b$ of codimension $c$, and a cycle $\gamma_b\in A_n(Y_b\times Y_b)$ such that
    \[ \Gamma\vert_{X_b\times X_b}=\gamma_b\ \ \ \hbox{in}\ H^{2n}(X_b\times X_b)\ .\]
    Then there exists a closed algebraic subset $\YY\subset\XX$ of codimension $c$, and a cycle $\gamma\in A_\ast(\YY\times_B \YY)$ such that
    \[   \Gamma\vert_{X_b\times X_b}=\gamma\vert_{X_b\times X_b}\ \ \ \hbox{in}\ H^{2n}(X_b\times X_b)\ \ \ \forall b\in B.\]
    \end{proposition}
    
    \begin{proof} This is a Hilbert schemes argument \cite[Proposition 3.7]{V0}.
    \end{proof}
  
 Applying proposition \ref{spread} to $\Gamma_0$,  
  it follows there exists a divisor $\YY\subset\XX$ and a cycle $\gamma\in A_\ast(\YY\times_B \YY)$ such that 
    \[    (\Gamma_0 -\gamma )\vert_{X_b\times X_b}=0\ \ \ \hbox{in}\ H^8(X_b\times X_b)\ , \ \ \ \forall b\in B\ .\]
  That is, the relative correspondence
   \[ \Gamma_1:= \Gamma_0-\gamma  \ \ \in A^4(\XX\times_B \XX) \]
has the property of being homologically trivial on every fibre:
 \[ (\Gamma_1)\vert_{X_b\times X_b}=0   \ \ \ \hbox{in}\ H^8(X_b\times X_b)\ ,\ \ \ \forall b\in B.\]  
 
 At this point, it is convenient to consider the family $\Ss\times_B \Ss$ (of products of surfaces $S_b\times S_b$), rather than the family $\XX$ (of Hilbert schemes $(S_b)^{[2]}$). That is,
 we consider the relative correspondence
 \[ \Gamma_2:=    \Psi\circ  \Gamma_1   \circ {}^t \Psi\ \ \ \in A^4(\Ss^{4/B})\ ,\]
 where 
  \[ \Ss^{4/B}:= \Ss\times_B \Ss\times_B \Ss\times_B \Ss\ .\]
  Since 
  \[  (\Gamma_2)\vert_{(S_b)^4} = (\Psi_b)\circ ((\Gamma_1)\vert_{X_b\times X_b})\circ {}^t \Psi_b\ \ \ \hbox{in}\ A^4((S_b)^4)\ \]
 (restriction and composition commute), the relative correspondence $\Gamma_2$ has the property of being homologically trivial on every fibre:
 \begin{equation}\label{homvan2} (\Gamma_2)\vert_{(S_b)^4}=0   \ \ \ \hbox{in}\ H^8((S_b)^4)\ ,\ \ \ \forall b\in B.\end{equation}  
 
 Let us now define four relative correspondences
   \[ \Gamma^{k,\ell}_3:= \Theta_k \circ \Gamma_2\circ  \Xi_\ell  \ \ \ \in A^2(\Ss^{2/B})\ ,\ \ \ k,\ell\in\{1,2\}\ ,\]
   where $\Xi_\ell, \Theta_k$ are as in proposition \ref{prod2}.
   
 It follows from (\ref{homvan2}) there is fibrewise homological vanishing
   \[ ( \Gamma^{k,\ell}_3)\vert_{S_b\times S_b}=0\ \ \ \hbox{in}\ H^4(S_b\times S_b)\ \ \ \forall b\in B\ (k,\ell\in\{1,2\})\ .\]
   
   Applying the Leray spectral sequence argument of \cite[Lemmas 3.11 and 3.12]{V0}, one finds that there exist 
    \[ \delta_{k,\ell}\in\ima\bigl( A^4(G\times G\times B)\to A^2(\Ss\times_B \Ss)\bigr)\ \ \ (k,\ell\in\{1,2\} )\]
    such that (after replacing $B$ by a smaller Zariski open subset) there is global homological vanishing
   \[ \Gamma^{k,\ell}_3+\delta_{k,\ell}\ \ \ \in A^2_{hom}(\Ss\times_B \Ss)\ \ \ (k,\ell\in\{1,2\} )\ .\]
   But then, in view of proposition \ref{vanish}, we have that
   \[ \Gamma^{k,\ell}_3+\delta_{k,\ell}=0\ \ \ \in A^2_{}(\Ss\times_B \Ss)\ \ \ (k,\ell\in\{1,2\})\ .\]
   Composing on both sides, this implies there are also rational equivalences
   \begin{equation}\label{chvan}     \Xi_k \circ \Gamma^{k,\ell}_3  \circ \Theta_\ell  + \delta^\prime_{k,\ell}=0\ \ \ \in A^4(\Ss^{4/B})\ \ \ 
     (k,\ell\in\{1,2\})\ ,         \end{equation}
   where we define $\delta_{k,\ell}^\prime:=\Xi_k\circ \delta_{k,\ell}\circ \Theta_\ell$.

We note that the action of the restricted correspondences $\delta_{k,\ell}\vert_{S_b\times S_b}$ on $A^i(S_b)$ factors over
$A^{i+4}(G)$. Since the Grassmannian $G$ has trivial Chow groups, this implies that
  \[ (\delta_{k,\ell}\vert_{S_b\times S_b})_\ast =0\colon\ \ \ A^i_{hom}(S_b)\ \to\ A^i_{hom}(S_b)\ \ \ \forall b\in B\ \ \ 
  (k,\ell\in\{1,2\} )\ .\]  
 As $\delta_{k,\ell}^\prime$ is composed with $\delta_{k,\ell}$, the same property holds for $\delta_{k,\ell}^\prime$:
   \[ (\delta_{k,\ell}^\prime\vert_{(S_b)^4})_\ast=0\colon\ \ \ A^i_{hom}(S_b\times S_b)\ \to\ A^i_{hom}(S_b\times S_b)\ \ \ \forall b\in B\ \forall i\ \ \ (k,\ell\in\{1,2\} )\ .\]
   
   Plugging this in the restriction of equality (\ref{chvan}) to a fibre, we see that 
   \[\begin{split} \bigl( (\Xi_k\circ \Gamma^{k,\ell}_3\circ \Theta_\ell)\vert_{(S_b)^4}\bigr){}_\ast   =0\colon\ \ \ 
   A^i_{hom}&(S_b\times S_b)\ \to\  A^i_{hom}(S_b\times S_b) \ ,\\
   &\hbox{for\ all\ i\ and\ for\ \ all\ $b\in B$}\ \ \ (k,\ell\in\{1,2\} )\ . 
   \end{split}\]
  In view of the definition of the $\Gamma^{k,\ell}_3$, this implies that
  \[  \begin{split}  &\Bigl( (\Xi_1\circ\Theta_1+\Xi_2\circ \Theta_2)\circ \Gamma_2\circ (\Xi_1\circ\Theta_1+\Xi_2\circ 
     \Theta_2)\vert_{(S_b)^4}\Bigr){}_\ast \\
      &=
     \sum_{k,\ell\in\{1,2\}}      \bigl( (\Xi_k\Theta_k\circ \Gamma_2\circ \Xi_\ell\circ\Theta_\ell)\vert_{(S_b)^4}\bigr){}_\ast\\   
     &=0\colon\ \ \ 
    A^i_{hom}(S_b\times S_b)\ \to\  A^i_{hom}(S_b\times S_b) \ ,\ \ \ \hbox{for\ all\ i\ and\ for\ all\ $b\in B$}\ .\\
   \end{split} \]  
   But 
   \[ \bigl((\Xi_1\circ\Theta_1+\Xi_2\circ 
     \Theta_2)\vert_{(S_b)^4}\bigr){}_\ast  =  (\Pi_2^{(S_b)^2})_\ast\colon\ \ \ A^2_{}(S_b\times S_b)\ \to\ A^2(S_b\times S_b) \]
     (proposition \ref{prod2}), and $A^2_{(2)}\subset A^2_{hom}$, and so this simplifies to
     \begin{equation}\label{rat3}
          (\Pi_2^{(S_b)^2})_\ast (\Gamma_2\vert_{(S_b)^4}){}_\ast =0\colon\ \ \ A^2_{(2)}(S_b\times S_b)\ \to\ A^2_{(2)}(S_b\times S_b) \ \hbox{for\ $b\in B$\ general}\ .\end{equation}

 To finish the proof of the $A^2_{(2)}(X)$ part of theorem \ref{main}, it remains to connect the action of (the restriction of) $\Gamma_2$ and the action of (the restriction of) the relative correspondence $\Gamma_0$ that we started out with. We make this connection in the next two lemmas:
 
 \begin{lemma}\label{30} Notation as above. There is equality
   \[  (\Gamma_2\vert_{(S_b)^4})_\ast  =  \bigl((\Psi\circ\Gamma_0\circ {}^t \Psi)\vert_{(S_b)^4}\bigr){}_\ast \colon\ \ \ A^2_{hom}(S_b\times S_b)\ \to\ A^2_{hom}(S_b\times S_b)\ \ \    \hbox{for\ $b\in B$\ general}\ . \]
    \end{lemma}
    
  \begin{proof} Unravelling the various definitions we made, we find
    \[ \begin{split} \Gamma_2  &=      \Psi\circ \Gamma_1\circ {}^t \Psi\circ \Pi^{\Ss^{2/B}}_2\\
                                              &=         \Psi\circ (\Gamma_0-\gamma)\circ {}^t \Psi\circ \Pi^{\Ss^{2/B}}_2\\   
                                              &=   \Psi\circ \Gamma_0\circ {}^t \Psi\circ \Pi^{\Ss^{2/B}}_2 -\gamma^\prime\ \ \ \ \hbox{in}\ A^4(\Ss^{4/B})\ , \\
                                         \end{split}\]
                                   where $\gamma^\prime:= \Psi\circ \gamma\circ {}^t \Psi\circ \Pi^{\Ss^{2/B}}_2$ is a completely decomposed cycle.
                  The restriction of $\gamma^\prime$ to a general fibre $(S_b)^4$ will be a completely decomposed cycle, and as such will not act on $A^\ast_{hom}(S_b\times S_b)$. This proves the lemma.          
           \end{proof}

 \begin{lemma}\label{31} Notation as above. There is equality
   \[ ({}^t \Psi_b)_\ast (\Psi_b)_\ast=2\ide\colon\ \ A^2_{hom}(X_b)\ \to\ A^2_{hom}(X_b)\ \ \ \forall b\in B\ .\]  
  \end{lemma}
  
  \begin{proof} This is noted in the proof of lemma \ref{compat}.
  \end{proof} 
  
 Obviously, lemmas \ref{30} and \ref{31} suffice to prove the $A^2_{(2)}(X)$ part of theorem \ref{main}: indeed, the combination of lemma \ref{30} with (\ref{rat3}) implies that
  \[    (\Pi_2^{(S_b)^2})_\ast    \bigl( (\Psi\circ\Gamma_0\circ {}^t \Psi)\vert_{(S_b)^4}\bigr){}_\ast   =0\colon\ \ \ A^2_{(2)}(S_b\times S_b)\ \to\  A^2_{}(S_b\times S_b) \ \ \ \hbox{for\ $b\in B$\ general}\ . \]
  Composing with $ ({}^t \Psi_b)_\ast$ on the left and with $(\Psi_b)_\ast$ on the right, 
  we find that
  \[  ({}^t \Psi_b)_\ast    (\Pi_2^{(S_b)^2})_\ast    \bigl( (\Psi\circ\Gamma_0\circ {}^t \Psi)\vert_{(S_b)^4}\bigr){}_\ast   (\Psi_b)_\ast  =0\colon\ \ \ A^2_{(2)}(X_b)\ \to\  A^2_{}(X_b) \ \ \ \hbox{for\ $b\in B$\ general}\ . \]  
  Applying relation (\ref{thishere}) and lemma \ref{31}, it follows that
  \[      (\Pi_2^{X_b})_\ast   (\Gamma_0)_\ast=0\colon\ \ \ A^2_{(2)}(X_b)  \ \to\ A^2_{}(X_b)\ \ \ \hbox{for\ $b\in B$\ general}\ . \]  
  By virtue of the definition of $\Gamma_0$ (and the fact that $A^2_{(2)}(X_b)=(\Pi_2^{X_b})_\ast A^2_{hom}(X_b)$), it follows that
   \[   (\Pi_2^{X_b})_\ast    (\Gamma_{\iota_b} +\Delta_{X_b})_\ast=0\colon\ \ \ A^2_{(2)}(X_b)\ \to\ A^2_{(2)}(X_b)\ \ \ \hbox{for\ }b\in B\ \hbox{general}\ ,\]
   as asserted by theorem \ref{main}.

  We have now proven the $A^2_{(2)}(X)$ part of theorem \ref{main}. The statement for $A^4_{(4)}(X)$ follows easily from this. Indeed, 
  Shen--Vial have proven the multiplication map
  \[ A^2_{(2)}(X)\otimes A^2_{(2)}(X)\ \to\ A^4_{(4)}(X) \]
  is surjective (theorem \ref{mult}). Given $b\in A^4_{(4)}(X)$, we can thus write
    \[ b= a_1\cdot a_2\ \ \ \hbox{in}\ A^4(X)\ ,\]
    where $a_1, a_2\in A^2_{(2)}(X)$. The statement we have just proven for $A^2_{(2)}$ implies that
    \[  \iota^\ast(a_j)=-a_j + r_j\ \ \ \hbox{in}\ A^2(X)\ \ \ (j=1,2)\ ,\]
    where $r_j\in A^2_{(0)}(X)\cap A^2_{hom}(X)$. It follows that
     \[ \iota^\ast(b) = \iota^\ast(a_1\cdot a_2)=\iota^\ast(a_1)\cdot \iota^\ast(a_2)= (-a_1+r_1)\cdot (-a_2+r_1)=a_1\cdot a_2 -r_1\cdot a_2 -r_2\cdot a_1\ \ \ \hbox{in}\ A^4(X)\ .\]
    (NB: note that $\iota$ is not a morphism, and so the second equality is {\em not\/} trivial. The second equality happens to be true since $a_1, a_2\in A^2_{AJ}(X)$; this is \cite[Proposition B.6]{SV}.)
    But then, since $-r_1 \cdot a_2-r_2\cdot a_1\in A^4_{(2)}(X)$, we have
    \[ (\Pi_4^X)_\ast \iota^\ast(b) = a_1\cdot a_2=b\ \ \ \hbox{in}\ A^4_{(4)}(X) \]
    as requested.
    
    It remains to prove theorem \ref{main} for $A^4_{(0)}(X)$ and $A^4_{(2)}(X)$. As we have seen (theorem \ref{mult}), Shen--Vial have shown there exists a class $l\in A^2_{(0)}(X)$ inducing an isomorphism
    \begin{equation}\label{sviso}  \cdot l\colon\ \ A^2_{(2)}(X)\ \xrightarrow{\cong}\ A^4_{(2)}(X)\ .\end{equation}
    
    We need to understand the action of $\iota$ on the class $l\in A^2(X)$. To this end, we will prove the following:
    
       \begin{proposition}\label{onl} Let $X$ and $\iota$ be as in theorem \ref{main}. Let $l\in A^2_{(0)}(X)$ be the class as in theorem \ref{mult}.
    Then
        \[ \iota^\ast(l) =\pm l\ \ \ \hbox{in}\ A^2(X)\ .\]
        \end{proposition}  
    
    Proposition \ref{onl} suffices to complete the proof of theorem \ref{main}. Indeed, one has
      \[ A^4_{(0)}(X) =\QQ [l^2] \]
      \cite[Theorem 4.6]{SV}. The action of $\iota$ on $l^2$ satisfies
      \[ \iota^\ast(l^2) = \iota^\ast(l)\cdot \iota^\ast(l) + \iota^\ast\bigl( (l-\iota_\ast\iota^\ast(l))\cdot   (l-\iota_\ast\iota^\ast(l))\bigr) = l^2\ \ \ \hbox{in}\ A^4(X)\ .\]
      (Here, for the first equality we have used \cite[Lemma B.4]{SV}, and the second equality follows from proposition \ref{onl}.) This proves the $A^4_{(0)}$ part of theorem \ref{main}.
      
   It remains to prove the $A^4_{(2)}$ part of theorem \ref{main}.       
   For this, let us suppose for a moment that proposition \ref{onl} is true with a minus sign, i.e.
    \[ \iota^\ast(l)=-l\ \ \ \hbox{in}\ A^2(X)\ .\]
    Using the isomorphism (\ref{sviso}), \cite[Proposition B.6]{SV}, and the fact that (as proven above) $\iota^\ast=-\ide$ on $A^2_{(2)}(X)$, this would imply
    \begin{equation}\label{contra}  \iota^\ast=\ide\colon\ \ \ A^4_{(2)}(X)\ \to\ A^4(X)\ .\end{equation}
  The statement for $A^4_{(4)}(X)$ we have just proven is that for any $b\in A^4_{(4)}(X)$ we have
  \[ \iota^\ast(b) = b+r\ \ \ \hbox{in}\ A^4(X)\ \]
  where $r\in A^4_{(2)}(X)$. Since $\iota^\ast\iota^\ast(b)=b$, this implies that $\iota^\ast(r)=-r$, and so (using equality (\ref{contra})) $r=0$.
    That is, we find that
    \[  \iota^\ast=\ide\colon\ \ \ A^4_{}(X)\ \to\ A^4(X)\ .\]
    Applying \cite[Lemma 3.1]{SV} to ${}^t \bar{\Gamma_\iota}-\Delta_X$, this would imply that
    \[ {}^t \bar{\Gamma_\iota}-\Delta_X = \gamma\ \ \ \hbox{in}\ A^4(X\times X)\ ,\]
    where $\gamma$ is a cycle supported on $X\times D$ for $D\subset X$ a divisor. In particular, this would imply
    \[ \iota^\ast=\ide\colon\ \ \ H^{2,0}(X)\ \to\ H^{2,0}(X)\ ,\]
    which is absurd since we know that $\iota$ is non--symplectic. The minus sign in proposition \ref{onl} can thus be excluded; assuming proposition \ref{onl} is true,
    we must have $\iota^\ast(l)=l$.
      
   Now let $c\in A^4_{(2)}(X)$. Using the isomorphism (\ref{sviso}), we can find $a\in A^2_{(2)}(X)$ such that
    \[ c=l\cdot a\ \ \ \hbox{in}\ A^4(X)\ .\] 
    We know (from the $A^2_{(2)}(X)$ statement proven above) that $\iota^\ast(a)=-a+r$, where $r\in A^2_{(0)}(X)\cap A^2_{hom}(X)$.
    But then
    \[ \iota^\ast(c)=\iota^\ast(l\cdot a)=  \iota^\ast(l)\cdot \iota^\ast(a) = l\cdot (-a+r) = -l\cdot a =-c\ \ \ \hbox{in}\ A^4(X)\ .\]
    Here, the second equality holds thanks to \cite[Proposition B.6]{SV}, and the third equality comes from proposition \ref{onl} and the statement for $A^2_{(2)}$. The fourth equality uses $A^2_{(0)}(X)\cdot (A^2_{(0)}(X)\cap A^2_{hom}(X))=0$, which is a consequence of $A^4_{(0)}(X)\cap A^4_{hom}(X)=0$.
    This proves theorem \ref{main}, assuming proposition \ref{onl}.
        
   We now proceed with the proof of proposition \ref{onl}. The first step is to prove the corresponding statement in homology:
   
   \begin{lemma}\label{onlhom} Let $S$ be any $K3$ surface and let $X=S^{[2]}$. Let $l\in A^2(X)$ be the class of theorem \ref{mult}, and let $\iota\in\bir(X)$ be any birational involution. Then we have
     \[  \iota^\ast(l) =\pm l\ \ \ \hbox{in}\ H^4(X)\ .\]   
   \end{lemma}
   
 \begin{proof} Shen and Vial have constructed a distinguished cycle $L\in A^2(X\times X)$ (whose cohomology class is the Beauville--Bogomolov class denoted $ \BB$ in loc. cit.), and an eigenspace decomposition
   \begin{equation}\label{eigen}  A^2(X)=\Lambda^2_{25}\oplus \Lambda^2_2\oplus \Lambda^2_0\ ,\end{equation}
   where 
   \[ \Lambda^i_\lambda := \{ a\in A^i(X)\ \vert\ (L^2)_\ast(a)=\lambda a\}\ ,\]
   and
   \[ \Lambda^2_{25}=\QQ[l]\ \]
 (This is \cite[Theorem 14.5, Propositions 14.6 and 14.8]{SV}, combined with \cite[Theorem 2.2]{SV}).
   
   We now observe the following commutativity relation in cohomology:
   
   \begin{lemma}\label{Li}  Set--up as in lemma \ref{onlhom}. Then
   \[ (L^2)_\ast\iota^\ast = \iota^\ast (L^2)_\ast\colon\ \ \ H^i(X)\ \to\ H^i(X)\ .\]
   \end{lemma}
   
 \begin{proof} Let $L\in A^2(X\times X)$ be the Shen--Vial cycle as above. As proven in \cite[Proposition 1.3(\rom1)]{SV}, the Shen--Vial cycle satisfies a quadratic relation
   \begin{equation}\label{quadr}  L^2 = 2\Delta_X -{2\over 25} (l_1+l_2) L - {1\over 23\cdot 25} (2l_1^2 -23l_1l_2 +2l_2^2)\ \ \ \hbox{in}\ H^8(X\times X)\ ,
          \end{equation}  
 where $l:=(i_\Delta)^\ast(L)$ (and $i_\Delta\colon X\to X\times X$ is the diagonal embedding) and $l_i:= (p_i)^\ast(l)$ (and $p_i$ are the obvious projections).
 
 Let us define a modified cycle
   \[ L^\prime:= \Gamma_\iota\circ L\circ \Gamma_\iota\ \ \ \in A^2(X\times X)\ .\]
   Using Lieberman's lemma (\cite[16.1.1]{F} or \cite[Lemma 3.3]{V3}), plus the fact that ${}^t \Gamma_\iota=\Gamma_\iota$, we see that
   \[ L^\prime= (\iota\times\iota)^\ast (L)\ \ \ \hbox{in}\  A^2(X\times X)\ .\]   
   Define also $l^\prime:=(i_\Delta)^\ast(L^\prime)\in A^2(X)$ and $l^\prime_i:=(p_i)^\ast(l^\prime)\in A^2(X\times X)$, $i=1,2$.
   Since the diagram
   \[ \begin{array}[c]{ccccc}  X\times X & \xrightarrow{p_i} & X & \xrightarrow{ i_\Delta} & X\times X\\
                             \ \ \ \ \ \ \  \ \  \downarrow{\iota\times\iota} && \ \ \ \ \ \downarrow{\iota} &&\ \ \ \ \ \ \ \ \  \ \downarrow{\iota\times\iota}\\
                                                X\times X & \xrightarrow{p_i} & X & \xrightarrow{ i_\Delta} & X\times X\\
                                                \end{array}\]
                                   commutes, we have the relations
                          \begin{equation}\label{ll}  l^\prime_i = (\iota\times\iota)^\ast (l_i)\ \ \ \hbox{in}\ A^2(X\times X)\ ,\ \ \ i=1,2\ .\end{equation}
       Let us apply $(\iota\times\iota)^\ast$ to the quadratic relation (\ref{quadr}). The result is a relation
       \begin{equation}\label{step} \begin{split} (\iota\times\iota)^\ast(L^2)=2\Delta_X -{2\over 25}(\iota\times\iota)^\ast (l_1+l_2) L^\prime - {1\over 23\cdot 25} (\iota\times\iota)^\ast(&2l_1^2 -23l_1l_2 +2l_2^2)\\   &\ \ \ \hbox{in}\ H^8(X\times X)\ .\\ \end{split}\end{equation}
       But 
       \begin{equation}\label{sq} (\iota\times\iota)^\ast(L^2) = \bigl((\iota\times\iota)^\ast L\bigr){}^2= (L^\prime)^2\ \ \ \hbox{in}\ A^4(X\times X)\ .\end{equation}
       Plugging this in equality (\ref{step}), and also using the relations (\ref{ll}), we find that the cycle $L^\prime$ satisfies a quadratic relation
 \begin{equation}\label{quadrp}  (L^\prime)^2 = 2\Delta_X -{2\over 25} (l^\prime_1+l^\prime_2) L^\prime - {1\over 23\cdot 25} (2(l^\prime_1)^2 -23l^\prime_1 l^\prime_2 +2(l^\prime_2)^2)\ \ \ \hbox{in}\ H^8(X\times X)\ .
          \end{equation}  
        
        But then, applying the unicity result \cite[Proposition 1.3 (\rom5)]{SV}, we find there is equality  
        \[ L^\prime=\pm L\ \ \ \hbox{in}\ H^4(X\times X)\ .\]
        In particular, there is equality
        \[  (L^\prime)^2= L^2\ \ \ \hbox{in}\ H^8(X\times X)\ .\]
        In view of equality (\ref{sq}), this means
        \[  \Gamma_\iota\circ (L^2)\circ \Gamma_\iota =L^2\ \ \ \hbox{in}\ H^8(X\times X)\ ,\]               
         and so (by composing with $\Gamma_\iota$)
        \[  \Gamma_\iota\circ (L^2) = (L^2)\circ \Gamma_\iota \ \ \ \hbox{in}\ H^4(X\times X)\ .\]  
        This proves lemma \ref{Li}.

  \end{proof}
    
  The eigenspace decomposition (\ref{eigen}) induces an eigenspace decomposition modulo homological equivalence:
    \[  \ima \bigl(   A^2(X)\ \to\ H^4(X) \bigr) =  \Lambda^2_{25} +  {\Lambda^2_2 \over A^2_{(0)}(X)\cap A^2_{hom}(X) }   \]
    (this is the algebraic part of the eigenspace decomposition of $H^4(X)$ given in \cite[Proposition 1.3(\rom3)]{SV}).
   
   Lemma (\ref{Li}) implies $\iota$ preserves this eigenspace decomposition modulo homological equivalence. In particular, $\iota^\ast \Lambda^2_{25}\subset\Lambda^2_{25}$ (modulo homologically trivial cycles), and so
   \[  \iota^\ast(l) = d l\ \ \ \hbox{in}\ H^4(X)\ ,\]
   for some $d\in\QQ$.
   Since $\iota$ is an involution, we must have $d=\pm 1$. This proves lemma \ref{onlhom}.
   \end{proof}  
     
  The next step (in proving proposition \ref{onl}) is to upgrade to rational equivalence. Here, we use again the method of ``spread'' developed in \cite{V0}, \cite{V1}. As above, let $\Ss\to B$ resp. $\XX\to B$ denote the family of all smooth degree $10$ $K3$ surfaces $S_b\subset\PP^3$, resp. of all Hilbert schemes $X_b=(S_b)^{[2]}$. 
  We note that there exists a relative cycle
   \[ \LL\in A^2(\XX) \]
   such that restriction
   \begin{equation}\label{fib} \LL\vert_{X_b} =l_b\ \ \ \in A^2(X_b)\ \ \ \forall b\in B\end{equation} 
   is the distinguished class (denoted $l$ in theorem \ref{mult}) for the fibre $X_b$. Indeed, one can define $\LL$ as
   \[ \LL:={5\over 6} c_2(T_{\XX/B})\ \ \ \in A^2(\XX)\ ,\]
   where $T_{\XX/B}$ is the relative tangent bundle of the smooth morphism $\XX\to B$. Since for any $b\in B$ there is a relation
    \[ l_b={5\over 6}c_2(X_b)\ \ \ \hbox{in}\ A^2(X_b)\] 
    \cite[Equation (93)]{SV}, this implies (\ref{fib}).
  
  The relative cycle
   \[ \Gamma_0:=  \LL \pm \iota^\ast (\LL)\ \ \ \in A^2(\XX)\]
   is such that the restriction to each fibre is homologically trivial:
   \[ (\Gamma_0)\vert_{X_b}=0\ \ \ \hbox{in}\ H^4(X_b)\ .\]
   (Here, ``$\pm$'' is taken to mean $+$ (resp. $-$) if lemma \ref{onlhom} is true with a $-$ (resp. a $+$).)
  Thus, the relative cycle 
   \[ \Gamma_1:=  \Psi_\ast (\Gamma_0)\ \ \ \in A^2(\Ss\times_B \Ss) \]
   also is homologically trivial on each fibre. (Here, $\Psi$ is the relative correspondence from $\XX$ to $\Ss\times_B \Ss$ as in the proof of theorem \ref{main}.)
   
  Applying \cite[Lemma 3.12]{V0}, up to shrinking $B$ we can make $\Gamma_1$ globally homologically trivial. That is, there exists 
    \[ \psi\ \ \in \ima \bigl( A^2(B\times \PP^3\times \PP^3)\ \to\ A^2(\Ss\times_B \Ss)\bigr)\]
     such that
  (after replacing $B$ by a non--empty open subset $B^\prime\subset B$)
   \[ \Gamma_2 := \Gamma_1 +\psi\ \ \ \in A^2(\Ss\times_{B^\prime} \Ss) \]
   is actually in  $A^2_{hom}(\Ss\times_{B^\prime} \Ss)$.
   
   But  $A^2_{hom}(\Ss\times_{B^\prime} \Ss)=0$ (proposition \ref{vanish}), and so
   \[ \Gamma_2=0\ \ \ \hbox{in}\ A^2 (\Ss\times_{B^\prime} \Ss)\ . \]
Restricting to a fibre, we find 
 \[  (\Gamma_1)\vert_{S_b\times S_b} + \psi\vert_{S_b\times S_b}  =0\ \ \ \hbox{in}\ A^2(S_b\times S_b)\ \ \ \forall b\in B^\prime\ .\]
 As $\Gamma_1$ is fibrewise homologically trivial, the same goes for $\psi$:
   \begin{equation}\label{vanpsi}   \psi\vert_{S_b\times S_b}  =0\ \ \ \hbox{in}\ H^4(S_b\times S_b)\ \ \ \forall b\in B^\prime\ .\end{equation}
   But $A^2(\PP^3\times\PP^3)=\oplus_i A^i(\PP^3)\otimes A^{2-i}(\PP^3)$ and so
   \[  \psi\vert_{S_b\times S_b}=  \lambda_0 [S_b]\times H_b^2 + \lambda_1 H_b\times H_b + \lambda_2 H_b^2\times [S_b]\ \ \ \hbox{in}\ A^2(S_b\times S_b)\ ,\]
   where $\lambda_i\in\QQ$ and $H_b\in A^1(S_b)$ is an ample class on $S_b$. It follows from the vanishing (\ref{vanpsi}) that the $\lambda_i$ must be $0$, and so 
    $ \psi\vert_{S_b\times S_b}$ is rationally trivial, and hence also
    \[   (\Gamma_1)\vert_{S_b\times S_b}   =  0\ \ \ \hbox{in}\ A^2(S_b\times S_b)\ .\]  
    Composing with ${}^t \Psi_b$, it follows that also
     \[ ({}^t \Psi_b)_\ast \bigl((\Gamma_1)\vert_{S_b\times S_b}\bigr)=({}^t \Psi_b)_\ast (\Psi_b)_\ast \bigl((\Gamma_0)\vert_{X_b}\bigr)=0 \ \ \ \hbox{in}\ A^2(X_b)\ \ \ \forall b\in B^\prime\ .\]
     On the other hand, as we have seen above $(\Gamma_0)\vert_{X_b}\in A^2_{hom}(X_b)$ and   $   ({}^t \Psi_b)_\ast (\Psi_b)_\ast$ is the identity on $A^2_{hom}(X_b)$.
     It follows that
     \[      (\Gamma_0)\vert_{X_b}= \bigl(l_b \pm (\iota_b)^\ast(l_b)\bigr)\vert_{X_b}   =0\ \ \ \hbox{in}\ A^2(X_b)\ \ \ \forall b\in B^\prime\ .\]
     This proves proposition \ref{onl} for general $b\in B$. To extend to {\em all\/} $b\in B$, one can invoke \cite[Lemma 3.2]{Vo}. Proposition \ref{onl} and hence theorem \ref{main} are now proven.
    
 \end{proof}   
   

    \begin{remark}\label{rem} Can one prove the commutativity of lemma \ref{Li} also modulo rational equivalence, i.e. can one prove
   \begin{equation}\label{Lirat} (L^2)_\ast\iota^\ast \stackrel{??}{=} \iota^\ast (L^2)_\ast\colon\ \ \ A^i(X)\ \to\ A^i(X)\ \ ? \end{equation}
 This would imply that $\iota$ respects the eigenspace decomposition $\Lambda^i_\lambda$ of \cite{SV} (and in particular, that $\iota$ respects the bigraded ring structure $A^\ast_{(\ast)}(X)$).
 
 The proof of lemma \ref{Li} given above does not extend to rational equivalence, for the following reason:
 The quadratic relation (\ref{quadr}) still holds modulo rational equivalence \cite[Theorem 14.5]{SV}, and so $L^\prime$ satisfies the quadratic relation (\ref{quadrp}) modulo rational equivalence.
 However, the unicity result (\cite[Proposition 1.3(\rom5)]{SV}), that allowed us to conclude from this that $L=\pm L^\prime$, is only known modulo homological equivalence.  
 
 (This unicity result modulo rational equivalence is conjecturally true, and would follow from the Bloch--Beilinson conjectures \cite[Proposition 3.4]{SV}.)
  \end{remark}

 \section{Some consequences}

 \subsection{A birational statement} Theorem \ref{main} can be extended to other birational models:
 
 \begin{corollary}\label{birat} Let $X$ and $\iota$ be as in theorem \ref{main}. Let $X^\prime$ be a hyperk\"ahler variety birational to $X$, and let $\iota^\prime\in\bir(X^\prime)$ be the birational involution induced by $\iota$. Then
   \[    \begin{split} ( \iota^\prime)^\ast=\ide\colon&\ \ \ A^4_{(0)}(X^\prime)\ \to\ A^4_{(0)}(X^\prime)\ ;\\                                  
                         (\iota^\prime)^\ast =-\ide\colon&\ \ \ A^4_{(2)}(X^\prime)\ \to\ A^4_{(2)}(X^\prime)\ ;\\
                             (\Pi_2^{X^\prime})_\ast (\iota^\prime)^\ast=-\ide \colon& \ \ \ A^2_{(2)}(X^\prime)\ \to\ A^2_{(2)}(X^\prime)\ ;\\           
                                         (\Pi_4^{X^\prime})_\ast      (\iota^\prime)^\ast =\ide\colon&\ \ \ A^4_{(4)}(X^\prime)\ \to\ A^4_{(4)}(X^\prime)\ .\\
                          \end{split}\] 
  \end{corollary}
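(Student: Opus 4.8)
The plan is to deduce Corollary \ref{birat} from Theorem \ref{main} by transporting everything along the birational map, using Rie\ss's theorem in the form recorded in Lemma \ref{hk}. Let $f\colon X\dashrightarrow X^\prime$ be the birational map relating the two hyperk\"ahler models, so that by hypothesis $\iota^\prime=f\circ\iota\circ f^{-1}$ as birational self--maps of $X^\prime$. By Rie\ss's result (as used in Lemma \ref{hk}), $f$ induces an isomorphism of Chow motives $h(X^\prime)\xrightarrow{\cong}h(X)$ compatible with the multiplicative structure; in particular the induced map
\[ \phi\colon\ \ A^\ast(X^\prime)\ \xrightarrow{\cong}\ A^\ast(X) \]
is an isomorphism of bigraded rings respecting the Shen--Vial bigrading, i.e. $\phi\bigl(A^i_{(j)}(X^\prime)\bigr)=A^i_{(j)}(X)$, and $\phi$ intertwines the two families of projectors, $(\Pi_j^X)_\ast\circ\phi=\phi\circ(\Pi_j^{X^\prime})_\ast$. (The distinguished class $l$ and the isomorphism of Theorem \ref{mult} are likewise carried to their counterparts on $X^\prime$, being intrinsic to the bigraded ring structure.)

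The second step is to check that $\phi$ intertwines the two involutions. Since $\iota^\prime=f\iota f^{-1}$ and pullback is contravariant, functoriality of the birational action on the Chow groups of hyperk\"ahler varieties gives $(\iota^\prime)^\ast=(f^{-1})^\ast\circ\iota^\ast\circ f^\ast$, which (identifying $\phi=f^\ast$ and $\phi^{-1}=(f^{-1})^\ast$) reads
\[ \phi\circ(\iota^\prime)^\ast=\iota^\ast\circ\phi\colon\ \ A^\ast(X^\prime)\ \to\ A^\ast(X)\ ,\qquad\text{i.e.}\qquad (\iota^\prime)^\ast=\phi^{-1}\circ\iota^\ast\circ\phi\ . \]

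With these two compatibilities in hand, each of the four assertions of Corollary \ref{birat} follows by conjugation from the corresponding assertion of Theorem \ref{main}. For $a^\prime\in A^4_{(0)}(X^\prime)$ one has $\phi(a^\prime)\in A^4_{(0)}(X)$, whence $(\iota^\prime)^\ast(a^\prime)=\phi^{-1}\iota^\ast\phi(a^\prime)=\phi^{-1}\phi(a^\prime)=a^\prime$; the case $A^4_{(2)}$ is identical up to the sign. For the two statements involving projectors, say $A^2_{(2)}$, one computes for $a^\prime\in A^2_{(2)}(X^\prime)$
\[ (\Pi_2^{X^\prime})_\ast(\iota^\prime)^\ast(a^\prime)=(\Pi_2^{X^\prime})_\ast\phi^{-1}\iota^\ast\phi(a^\prime)=\phi^{-1}(\Pi_2^X)_\ast\iota^\ast\phi(a^\prime)=\phi^{-1}\bigl(-\phi(a^\prime)\bigr)=-a^\prime\ , \]
using the intertwining of the $\Pi_2$'s together with Theorem \ref{main}; the $A^4_{(4)}$ case is the same with $\Pi_4$ and a plus sign.

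I expect the one genuinely delicate point to be the functoriality invoked in the second step: $\iota$, $\iota^\prime$ and $f$ are only birational, so the relation $\iota^\prime=f\iota f^{-1}$ must be promoted from an identity of birational self--maps to an identity of the induced operators on the bigraded Chow pieces. Concretely, the graph closure $\Gamma_{\iota^\prime}$ differs from $\Gamma_f\circ\Gamma_\iota\circ\Gamma_{f^{-1}}$ by correction terms supported on the exceptional loci of the birational maps, and one must verify that these do not affect the action on $A^i_{(j)}(X^\prime)$ for the pieces in question. This is exactly the regime controlled by Rie\ss's theorem, and is the only part requiring genuine argument beyond formal bookkeeping.
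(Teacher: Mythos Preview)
Your overall strategy---transport Theorem \ref{main} along the birational map via the Rie\ss/Vial isomorphism of bigraded Chow rings---is exactly the paper's. The first step (that $\phi$ respects the bigrading and intertwines the $\Pi_j$) is fine and is what Lemma \ref{hk} gives.

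The gap is precisely the one you flag in your last paragraph, and your suggested fix is not quite right. Rie\ss's theorem controls a \emph{single} birational map of hyperk\"ahler varieties; it says nothing about the difference $\bar{\Gamma}_{\iota^\prime}-\bar{\Gamma}_f\circ\bar{\Gamma}_\iota\circ\bar{\Gamma}_{f^{-1}}$, which is what you need to show acts trivially on the relevant pieces. The paper handles this commutativity by two separate, elementary arguments rather than by invoking Rie\ss\ again:
\begin{itemize}
\item For $i=4$ (hence all three $A^4_{(j)}$ statements): choose an open $U\subset X$ on which $\iota$ is a morphism and $\phi$ restricts to an isomorphism onto $U^\prime\subset X^\prime$. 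Any $0$--cycle on $X$ can be represented by a cycle supported in $U$, and then the square commutes tautologically.
\item For $(i,j)=(2,2)$: the $i=4$ case shows that the correspondence $\bar{\Gamma}_\phi\circ\bar{\Gamma}_\iota-\bar{\Gamma}_{\iota^\prime}\circ\bar{\Gamma}_\phi$ acts as zero on $A^4(X)$. The Bloch--Srinivas decomposition (in the form \cite[Lemma 3.1]{SV}) then forces this correspondence to act as zero on $A^2_{AJ}(X)$ as well, and $A^2_{(2)}\subset A^2_{AJ}$.
\end{itemize}
So the ``genuine argument'' you anticipate is short, but it is not another application of Rie\ss; it is a moving--lemma observation for $0$--cycles followed by a Bloch--Srinivas/decomposition--of--the--diagonal step to propagate to $A^2_{AJ}$.
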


\begin{proof} First, let us recall (lemma \ref{hk}) that $X^\prime$ has an MCK decomposition (induced by an MCK decomposition for $X$, and the birational map 
$\phi\colon X\dashrightarrow X^\prime$ induces isomorphisms
  \[ \phi_\ast\colon\ \ \ A^i_{(j)}(X)\ \xrightarrow{\cong}\ A^i_{(j)}(X^\prime)\ .\]
  
  To deduce corollary \ref{birat} from theorem \ref{main}, it only remains to establish commutativity of the diagram
    \begin{equation}\label{square} \begin{array}[c]{ccc}
                 A^i_{(j)}(X) & \xrightarrow{\phi_\ast}& A^i_{(j)}(X^\prime)\\
                \ \ \ \  \downarrow{\scriptstyle \iota^\ast} &&   \ \ \ \  \downarrow{\scriptstyle (\iota^\prime)^\ast}\\
                A^i_{}(X) & \xrightarrow{\phi_\ast}& A^i_{}(X^\prime)\\    
              \end{array}\end{equation}
        for the relevant $(i,j)$. Let $U\subset X$, $U^\prime\subset X^\prime$ be opens such that $\iota$ is everywhere defined on $U$ and $\phi$ induces an isomorphism between $U$ and $U^\prime$. Any $0$--cycle $a\in A^4(X)$ is represented by a cycle $\alpha$ with support contained in $U$. Then $\phi_\ast(a)$ is represented by the cycle with isomorphic support in $U^\prime$. This proves commutativity of the square (\ref{square}) for $i=4$. The Bloch--Srinivas argument \cite{BS} (or, more precisely, \cite[Lemma 3.1]{SV}) applied to the correspondence
      \[ \bar{\Gamma}_\phi\circ \bar{\Gamma}_\iota   - \bar{\Gamma}_{\iota^\prime}\circ\bar{\Gamma}_\phi\ \ \ \in A^4(X\times X) \]
   (where $\bar{\Gamma}$ indicates closure of the cycle $\Gamma\subset X\times X$) then shows commutativity for $A^2_{AJ}(X)=A^2_{hom}(X)$, and so (since $A^2_{(2)}\subset A^2_{hom}$) the square (\ref{square}) commutes for $(i,j)=(2,2)$.   
       \end{proof}

 \subsection{EPW sextics}   
 
Let $X$ and $\iota$ be as in theorem \ref{main}. As we have seen, there is a birational modification $X^\prime$ of $X$ that is a hyperk\"ahler fourfold, and such that there is a generically $2:1$ morphism from $X^\prime$ to an EPW sextic $Y$ (\cite{OG}, cf. theorem \ref{og2} above). 
The following result is about the Chow ring $A^\ast$ (in the sense of operational Chow cohomology \cite{F}) of the EPW sextic $Y$. We note that for any variety $M$, there exists a ``cycle class'' map 
  \[A^i(M)\to \gr^W_{2i} H^{2i}(M)\] 
  which is functorial, and agrees with the usual cycle class map for smooth $M$ \cite{Tot}.

\begin{corollary}\label{epw}  Let $X$ be as in theorem \ref{main}, and let 
  $Y\subset\PP^5$ be the associated EPW sextic.
  For any $r\in\NN$, let
  \[ E^\ast(Y^r)\ \subset\ A^\ast(Y^r) \]
  be the subring generated by (pullbacks of) $A^1(Y)$ and $A^2(Y)$. The cycle class map 
  \[   E^k(Y^r)\ \to\ \gr^W_{2k}H^{2k}(Y^r) \]
  is injective for $k\ge 4r-1$.
  \end{corollary}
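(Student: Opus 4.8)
The plan is to transfer the question from the EPW sextic $Y$ to the smooth hyperk\"ahler fourfold $X^\prime$ birational to $X=S^{[2]}$, where the full strength of theorem \ref{main} (in its birational incarnation, corollary \ref{birat}) is available. First I would use that $Y$ is a quotient of a model of $X^\prime$ by the involution: by theorem \ref{og2}, $Y=X_A/\iota_A$ with $\iota_A$ a genuine automorphism of the double cover $X_A$, and $X^\prime=X_A^\epsilon$ is a small resolution with exceptional locus a single $\PP^2$. Working with $\QQ$-coefficients and operational Chow cohomology, the finite quotient gives $A^\ast(Y)\cong A^\ast(X_A)^{\iota_A}$, and passing through the small resolution identifies $A^\ast(Y)$, compatibly with the cycle class maps, with the $\iota^\prime$-invariants $A^\ast(X^\prime)^{\iota^\prime}$ up to the negligible contribution of the exceptional $\PP^2$. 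Since $p\colon X^\prime\to Y$ is a genuine generically $2{:}1$ morphism, $p^\ast$ is an injective ring homomorphism with $\tfrac12 p_\ast p^\ast=\ide$; pulling back along $p^{\times r}\colon (X^\prime)^r\to Y^r$ therefore carries $E^\ast(Y^r)$ isomorphically onto the subring $\bar E^\ast$ of $A^\ast((X^\prime)^r)$ generated by the invariant classes $p^\ast A^1(Y)$ and $p^\ast A^2(Y)$ pulled back from the $r$ factors. As $p^\ast$ is injective on cohomology too, the corollary becomes equivalent to the vanishing $\bar E^k\cap A^k_{hom}((X^\prime)^r)=0$ for $k\ge 4r-1$.

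The heart of the argument is to locate $\bar E^\ast$ inside the grade-$0$ part of the bigraded ring $A^\ast_{(\ast)}((X^\prime)^r)$. Divisor classes are of pure grade $0$, so $p^\ast A^1(Y)\subseteq A^1_{(0)}(X^\prime)$, and the essential point is the analogous statement in codimension $2$,
\[ p^\ast A^2(Y)\ \subseteq\ A^2_{(0)}(X^\prime)\ . \]
This is exactly where theorem \ref{main} enters. Any $B\in p^\ast A^2(Y)$ is $\iota^\prime$-invariant, and corollary \ref{birat} gives $(\Pi_2^{X^\prime})_\ast(\iota^\prime)^\ast=-\ide$ on $A^2_{(2)}(X^\prime)$; since $(\Pi_2^{X^\prime})_\ast$ is the identity on $A^2_{(2)}=A^2_{AJ}$, an invariant class lying in $A^2_{(2)}$ satisfies $c=-c$, whence $A^2_{(2)}(X^\prime)^{\iota^\prime}=0$ and an invariant codimension-$2$ class carries no genuine Abel--Jacobi part. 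Combined with the fact that for very general $S$ one has $A^2_{(0)}(X^\prime)\cap A^2_{hom}(X^\prime)=0$ (so $A^2(X^\prime)^{\iota^\prime}\hookrightarrow H^4(X^\prime)$, and the invariant algebraic classes are exhausted by the manifestly grade-$0$ classes $l$ and divisor products such as $p^\ast(h^2)=(p^\ast h)^2$), this yields the displayed inclusion. Multiplicativity of the MCK decomposition, which is a bigraded ring structure and whose product/relative version is supplied by propositions \ref{prod} and \ref{relmck}, then propagates grade $0$ through products, giving $\bar E^\ast\subseteq A^\ast_{(0)}((X^\prime)^r)$.

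Finally I would exploit the degree bound. In the product bigrading $A^k_{(0)}((X^\prime)^r)=\bigoplus_{\sum i_m=k}\bigotimes_m A^{i_m}_{(0)}(X^\prime)$ with each $i_m\le 4$; for $k\ge 4r-1$ every summand has all but at most one index equal to $4$, so only the factors $A^4_{(0)}(X^\prime)$ and $A^3_{(0)}(X^\prime)$ occur. Both inject into cohomology for very general $S$: $A^4_{(0)}(X^\prime)=\QQ[l^2]$ by \cite[Theorem 4.6]{SV}, and $A^3_{(0)}(X^\prime)$ is generated by $l$ times divisor classes, hence is algebraic and injects into $H^6$. Thus $A^k_{(0)}((X^\prime)^r)\hookrightarrow H^{2k}((X^\prime)^r)$ for $k\ge 4r-1$, and the desired injectivity of $\bar E^k$ follows; this is precisely the reason for the cutoff, since at $k=4r-2$ a factor $A^2_{(0)}(X^\prime)$ could appear, which need not inject. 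I expect the main obstacle to be the codimension-$2$ inclusion $p^\ast A^2(Y)\subseteq A^2_{(0)}(X^\prime)$: controlling the possible grade-$2$ component of an invariant codimension-$2$ class (equivalently, the vanishing $A^2_{(0)}\cap A^2_{hom}=0$ and the grade-$0$-preservation of $(\iota^\prime)^\ast$ on divisor squares for very general $S$) is the delicate step, because $\iota^\prime$ is only birational and is not known to respect the bigrading modulo rational equivalence, as emphasized in remark \ref{rem}.
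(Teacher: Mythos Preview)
Your overall architecture matches the paper's: pull back along $p^r$, prove the key inclusion $p^\ast A^2(Y)\subset A^2_{(0)}(X^\prime)$, use multiplicativity of the MCK to land in $A^\ast_{(0)}((X^\prime)^r)$, and then invoke injectivity of the cycle class map on $A^k_{(0)}$ for $k\ge 4r-1$ (the paper cites \cite[Introduction]{V6} for this last step; your tensor-product description of $A^k_{(0)}$ of a product is not literally correct, but the conclusion you want is available as a black box).

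The genuine gap is in your proof of $p^\ast A^2(Y)\subset A^2_{(0)}(X^\prime)$. You argue that an $\iota^\prime$-invariant class $B$ has vanishing $A^2_{(2)}$-component because $A^2_{(2)}(X^\prime)^{\iota^\prime}=0$. But this does not follow: writing $B=c_0+c_2$, invariance of $B$ does \emph{not} force $c_2$ to be invariant, since $\iota^\prime$ is not known to preserve the bigrading (this is exactly the content of remark \ref{rem} and Question 5.1). Your workaround --- asserting $A^2_{(0)}(X^\prime)\cap A^2_{hom}(X^\prime)=0$ (equivalently $A^2_{(2)}=A^2_{AJ}$) for very general $S$ --- is not established anywhere in the paper and, as far as I can see, is not known; indeed the paper repeatedly manipulates potentially nonzero elements of $A^2_{(0)}\cap A^2_{hom}$ (the classes $r_j$ in the proof of theorem \ref{main}, and the case $R=A^2_{(0)}\cap A^2_{hom}$ in lemma \ref{compat}). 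If this vanishing were available, the $(\Pi_2^X)_\ast$ in theorem \ref{main} would be superfluous.

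The paper closes this gap differently: instead of controlling $c_2$ directly, it proves (lemma \ref{preserve}) that $(\iota^\prime)^\ast(c_0)\in A^2_{(0)}(X^\prime)$ for the specific $c_0$ arising from $p^\ast(b)$. The trick is to multiply by $l^\prime$ and pass to $A^4$, where theorem \ref{main} gives clean statements without the projector: one uses $(\iota^\prime)^\ast(l^\prime)=l^\prime$ (proposition \ref{onl}), the $A^4_{(0)}$ and $A^4_{(2)}$ parts of corollary \ref{birat}, and then the hard Lefschetz isomorphism $\cdot\, l\colon A^2_{(2)}\xrightarrow{\cong}A^4_{(2)}$ of theorem \ref{mult}(\romannumeral2) to pull the conclusion back to $A^2$. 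A separate sublemma (\ref{comiota}) handles the multiplicativity $(\iota^\prime)^\ast(c_0\cdot l^\prime)=(\iota^\prime)^\ast(c_0)\cdot(\iota^\prime)^\ast(l^\prime)$, which is nontrivial since $\iota^\prime$ is only birational; this is where the factorization through the genuine automorphism $\iota_A$ of $X_A$ is used. Once $(\iota^\prime)^\ast(c_0)\in A^2_{(0)}$ is known, comparing the $A^2_{(2)}$-components of $B=(\iota^\prime)^\ast B$ forces $c_2=-c_2$, hence $c_2=0$.
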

  
  \begin{proof} Let $X^\prime$ and $\iota^\prime$ be as in theorem \ref{og2}. The point is that $X^\prime$, and hence also $(X^\prime)^r$, has an MCK decomposition \cite{SV}. Let $p\colon X^\prime\to Y$ denote the morphism of theorem \ref{og2}. 
  
  \begin{lemma}\label{in0} We have
    \[ p^\ast A^2(Y)\ \subset\ A^2_{(0)}(X^\prime)\ .\]
   \end{lemma}
   
   \begin{proof} As explained in the proof of theorem \ref{og2}, the morphism $p$ decomposes as
   \[ p\colon\ \ \ X^\prime:=X_A^\epsilon\ \xrightarrow{s}\ X_A\ \xrightarrow{q}\ Y\ ,\]
   where $s$ is a small resolution of the singular variety $X_A$, and $q$ is a double cover with covering involution $\iota_A$
   (and $\iota_A$ agrees with $\iota^\prime$ on the open where $\iota^\prime$ is defined). Because of the equality $q\circ \iota_A=q\colon X_A\to Y$, one has an inclusion
   \[ q^\ast A^i(Y)\ \subset\ A^i(X_A)^{\iota_A}\ \ \ \forall i\ .\]
   Because of the equality $\iota_A\circ s = s\circ \iota^\prime\colon X^\prime\to X_A$, one has an inclusion
   \[  s^\ast \bigl ( A^i(X_A)^{\iota_A} \bigr)\ \subset\ A^i(X^\prime)^{\iota^\prime}\ \ \ \forall i\ .\]
     
   Combining these two inclusions and taking $i=2$, we find in particular that
    \[ p^\ast A^2(Y)\ \subset\ A^2(X^\prime)^{\iota^\prime}\ .\]

     Given $b\in A^2(Y)$, let us write
     \[ p^\ast(b) = c_0 + c_2\ \ \ \in A^2_{(0)}(X^\prime)\oplus A^2_{(2)}(X^\prime)\ .\]
     Applying $\iota^\prime$, we find
       \begin{equation}\label{this}   (\iota^\prime)^\ast p^\ast(b) = p^\ast(b)=c_0 + c_2 \ \ \ \in A^2_{(0)}(X^\prime)\oplus A^2_{(2)}(X^\prime)\ .\end{equation}
      On the other hand, corollary \ref{birat} implies that
      \[  (\iota^\prime)^\ast(c_2)=-c_2+r\ \ \ \hbox{in}\ A^2(X^\prime)\ ,\]
      where $r\in A^2_{(0)}(X^\prime)$.
      It follows that      
            \begin{equation}\label{thistoo} (\iota^\prime)^\ast p^\ast(b) = (\iota^\prime)^\ast(c_0) +(\iota^\prime)^\ast(c_2) =(\iota^\prime)^\ast(c_0)+r  - c_2\ \ \ \in A^2_{(0)}(X^\prime)\oplus A^2_{(2)}(X^\prime)\ \end{equation}
     (where we have used lemma \ref{preserve} below to obtain that $(\iota^\prime)^\ast(c_0)\in A^2_{(0)}(X^\prime)$).  
     Comparing expressions (\ref{this}) and (\ref{thistoo}), we find
      \[  (\iota^\prime)^\ast(c_0)+r = c_0\ \ \ \hbox{in}\ A^2_{(0)}(X^\prime)\ ,\ \ \ -c_2=c_2\ \ \ \hbox{in}\ A^2_{(2)}(X^\prime)\ ,\]
      proving lemma \ref{in0}.

      \begin{lemma}\label{preserve} Set--up as above. 
      Let $b\in A^2(Y)$, and write
       \[ p^\ast(b) = c_0 + c_2\ \ \ \in A^2_{(0)}(X^\prime)\oplus A^2_{(2)}(X^\prime)\ .\]   
       Then
            \[ (\iota^\prime)^\ast (c_0)\in\  A^2_{(0)}(X^\prime)\ .\]
  \end{lemma}
  
  \begin{proof} 
   Suppose
   \[ (\iota^\prime)^\ast(c_0) = d_0 + d_2\ \ \ \hbox{in}\ A^2(X^\prime)\ ,\]  
   with $d_0\in A^2_{(0)}(X^\prime)$ and $d_2\in A^2_{(2)}(X^\prime)$.
   
   Let $\gamma\in A^4(X\times X^\prime)$ be the correspondence of \cite{Rie} (cf. also lemma \ref{hk}) inducing an isomorphism of bigraded rings
   \begin{equation}\label{isoul} \gamma_\ast\colon\ \ \ A^\ast_{(\ast)}(X)\ \xrightarrow{\cong}\ A^\ast_{(\ast)}(X^\prime)\ .\end{equation}
   
   Let $l\in A^2_{(0)}(X)$ be the distinguished class of theorem \ref{mult}, and define
       \[l^\prime:= \gamma_\ast(l)\ \ \   \in \ A^2_{(0)}(X^\prime)\ .\]
   The $0$--cycle $c_0\cdot l^\prime$ is in $A^4_{(0)}(X^\prime)$, and so corollary \ref{birat} implies that
   \begin{equation}\label{here} (\iota^\prime)^\ast (c_0\cdot l^\prime) = c_0\cdot l^\prime\ \ \ \hbox{in}\ A^4_{(0)}(X^\prime)\ .\end{equation}
        
   On the other hand, we have
   \begin{equation}\label{andhere}   (\iota^\prime)^\ast (c_0\cdot l^\prime)  =  (\iota^\prime)^\ast(c_0)\cdot (\iota^\prime)^\ast(l^\prime)=  (d_0+ d_2)\cdot l^\prime = d_0\cdot l^\prime + d_2\cdot l^\prime \ \ \ \hbox{in}\ A^4(X^\prime) \ .\end{equation}
   (Here in the first equality, we have used sublemma \ref{comiota} below, and in the second equality we have used proposition \ref{onl}, which we have seen must be true with a $+$ sign.) 
      
   Since $d_0\cdot l^\prime\in A^4_{(0)}(X^\prime)$ and $d_2\cdot l^\prime\in A^4_{(2)}(X^\prime)$, comparing expressions (\ref{andhere}) and (\ref{here}), we see that we must have
     \[  d_0\cdot l^\prime = c_0\cdot l^\prime\ \ \ \hbox{in}\ A^4_{(0)}(X^\prime)\ ,\ \ \ d_2\cdot l^\prime=0\ \ \ \hbox{in}\ A^4_{(2)}
      (X^\prime)\ .\]
     In view of the isomorphism (\ref{isoul}) and the injectivity part of theorem \ref{mult}, this implies that $d_2=0$, thus proving lemma \ref{preserve}.
      \end{proof}  
    
   Above, we have used the following sublemma:
    
  \begin{sublemma}\label{comiota} Let $c_0\in A^2_{(0)}(X^\prime)$ be as above. For any $e\in A^2(X^\prime)$, 
   there is equality
   \[ (\iota^\prime)^\ast (c_0\cdot e )  =  (\iota^\prime)^\ast (c_0)\cdot  (\iota^\prime)^\ast (e)  \ . \]
   \end{sublemma}
   
   \begin{proof} As we have seen above, the morphism $p$ is a composition of morphisms 
      \[p\colon X^\prime\ \xrightarrow{s}\ X_A\ \xrightarrow{}\ Y\ ,\]
      where $X_A$ is the ``singular double EPW sextic'' on which there exists an involution $\iota_A\in\aut(X_A)$ extending $\iota$. We thus have
      \[ p^\ast A^2(Y)\ \subset\ s^\ast A^2(X_A)\ \]
      (where we recall that $A^\ast()$ of the singular varieties $Y$ and $X_A$ means the operational Chow cohomology of \cite{F}). 
     Let $b_A :=q^\ast(b)\in A^2(X_A)$, so that $p^\ast(b)=s^\ast(b_A)$ in $A^2(X^\prime)$.
     We claim that there is equality
     \begin{equation}\label{onXpr}  (\iota^\prime)^\ast (s^\ast(b_A)\cdot e )  =  (\iota^\prime)^\ast s^\ast(b_A)\cdot  (\iota^\prime)^\ast (e) \ \ \ \forall e\in A^2(X^\prime) \ . \end{equation}  
     Since there is also equality
      \[  (\iota^\prime)^\ast (c_2\cdot e )  =  (\iota^\prime)^\ast (c_2)\cdot  (\iota^\prime)^\ast (e) \ \ \ \forall e\in A^2(X^\prime) \ \] 
      (because $c_2\in A^2_{(2)}(X^\prime)\subset A^2_{AJ}(X^\prime)$, this follows from \cite[Proposition B.6]{SV}), and the definitions imply that $c_0=s^\ast(b_A)-c_2$, the claim suffices to prove the sublemma.   
      
     To prove the claim, we exploit the fact that there is a commutative square
    \[  \begin{array}[c]{ccc}
      X^\prime &\xdashrightarrow{\iota^\prime}& X^\prime\\
      \downarrow{\scriptstyle s}&& \downarrow{\scriptstyle s}\\
      X_A &\xrightarrow{\iota_A}& X_A\\
      \end{array}\]
      
  Since $s_\ast\colon A_0(X^\prime)\to A_0(X_A)$ is an isomorphism, it suffices to prove equality (\ref{onXpr}) holds after pushing forward under $s$. The pushforward of the left--hand side of (\ref{onXpr}) is
  \[ \begin{split} s_\ast (\iota^\prime)^\ast (s^\ast(b_A)\cdot e )      &=  ((\iota_A)^{-1})_\ast s_\ast (s^\ast (b_A)\cdot e)\\
                     &=  (\iota_A)^\ast s_\ast (s^\ast (b_A)\cdot e)\\                     
                       &= (\iota_A)^\ast (b_A\cap s_\ast(e))\\
                       &= (\iota_A)^\ast (b_A)\cap (\iota_A)_\ast s_\ast(e)\ \ \ \hbox{in}\ A_0(X_A)\ .\\
                       \end{split}\]
(Here the notation $ \alpha\cap\beta$ means the action of an operational Chow cohomology class $\alpha\in A^\ast(X_A)$ on $\beta\in A_\ast(X_A)$. The third equality is an application of the projection formula as given in \cite[Definition 17.3]{F}.)

          The pushforward of the right--hand side of (\ref{onXpr}) is
        \[ \begin{split}     s_\ast \bigl( (\iota^\prime)^\ast s^\ast(b_A)\cdot  (\iota^\prime)^\ast (e) \bigr) &=
                           s_\ast \bigl( s^\ast (\iota_A)^\ast (b_A)   \cdot (\iota^\prime)^\ast(e)\bigr)    \\
                           &= (\iota_A)^\ast (b_A)\cap s_\ast (\iota^\prime)^\ast(e) \\
                           & = (\iota_A)^\ast (b_A)\cap  ((\iota_A)^{-1})_\ast s_\ast(e) \\
                           & = (\iota_A)^\ast (b_A)\cap (\iota_A)_\ast s_\ast(e)\ \ \ \hbox{in}\  A_0(X_A)\ .\\
                           \end{split}\]
                           This proves the claim, and hence sublemma \ref{comiota}.
      \end{proof}

     \end{proof}
  
  Lemma \ref{in0}, combined with the obvious fact that $A^1(X^\prime)=A^1_{(0)}(X^\prime)$, implies that
    \[  (p^r)^\ast E^\ast(Y^r)\ \subset\ A^\ast_{(0)}((X^\prime)^r)\ .\]
  Since there is a commutative diagram
    \[ \begin{array}[c]{ccc}   
             A^k_{(0)}((X^\prime)^r) & \to & H^{2k}((X^\prime)^r)  \\
           \ \ \ \ \ \   \uparrow{\scriptstyle (p^r)^\ast} &&    \ \ \ \ \ \   \uparrow{\scriptstyle (p^r)^\ast}\\
           E^k(Y^r) & \to & \ \ \gr^W_{2k} H^{2k}(Y^r)\ , \\
           \end{array}\]
               and the cycle class map
    \[  A^k_{(0)}((X^\prime)^r)\ \to\ H^{2k}((X^\prime)^r) \]
    is known to be injective for $k\ge 4r-1$ (\cite[Introduction]{V6}; this follows for instance from \cite[Section 4.3]{V4}), this establishes corollary \ref{epw}.
    \end{proof}

  We single out a particular case of corollary \ref{epw}:
  
  \begin{corollary}\label{epw1}  Let $Y\subset\PP^5$ be an EPW sextic as in corollary \ref{epw}. The subspaces
        \[   \begin{split} &\ima \Bigl(  A^2(Y)\otimes A^1(Y)\ \to\ A^3(Y)\Bigr)\ ,\\
                             &\ima \Bigl(  A^2(Y)\otimes A^2(Y)\ \to\ A^4(Y)\Bigr)\ \\
                             \end{split}\]
  are of dimension $1$. 
  \end{corollary}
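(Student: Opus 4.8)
The plan is to derive both equalities from corollary \ref{epw} with $r=1$, after a short computation of the $\iota^\prime$--invariant cohomology of the birational model $X^\prime$ of theorem \ref{og2}.

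First I would observe that the two subspaces in question lie in $E^\ast(Y)$, namely $\ima(A^2(Y)\otimes A^1(Y)\to A^3(Y))\subset E^3(Y)$ and $\ima(A^2(Y)\otimes A^2(Y)\to A^4(Y))\subset E^4(Y)$. Since $3,4\ge 4\cdot 1-1$, corollary \ref{epw} applies: as in its proof, pullback along $p\colon X^\prime\to Y$ is injective on $E^\ast(Y)$ (it lands in $A^\ast_{(0)}(X^\prime)$, on which the cycle class map is injective in these degrees). Thus the dimension of each image equals the dimension of its image under $p^\ast$ inside $H^6(X^\prime)$, resp. $H^8(X^\prime)$. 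Moreover every class $p^\ast(\alpha)$, $\alpha\in A^\ast(Y)$, is $\iota^\prime$--invariant: writing $p=q\circ s$ with $q\colon X_A\to Y$ the double cover and $s\colon X^\prime\to X_A$ the small resolution of theorem \ref{og2}, we have $q\circ\iota_A=q$ and $s^\ast(\iota_A)^\ast=(\iota^\prime)^\ast s^\ast$, so $(\iota^\prime)^\ast p^\ast\alpha=p^\ast\alpha$.

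For the second image this already finishes things: its $p^\ast$--image lies in $H^8(X^\prime)^{\iota^\prime}=H^8(X^\prime)\cong\QQ$, and it is nonzero since it contains $(p^\ast h)^4$ with $\int_Y h^4=6\ne 0$; hence it is $1$--dimensional. For the first image, the $p^\ast$--image lies in $H^6(X^\prime)^{\iota^\prime}$, so it suffices to prove $\dim H^6(X^\prime)^{\iota^\prime}=1$. Here I would compute the character of $\iota^\prime$ on $H^2(X^\prime)$: being anti--symplectic it acts as $-1$ on the $21$--dimensional transcendental lattice $H^2_{tr}(X^\prime)$, and by proposition \ref{og1} it acts on the rank--$2$ group $NS(X^\prime)$ as a reflection, with eigenvalues $+1,-1$. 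Thus $\operatorname{tr}\bigl((\iota^\prime)^\ast\mid H^2(X^\prime)\bigr)=-21$, while $b_2(X^\prime)=b_6(X^\prime)=23$. Since $(\iota^\prime)^\ast$ is a ring automorphism of $H^\ast(X^\prime)$ (Rie\ss, cf. lemma \ref{hk}) acting trivially on $H^8(X^\prime)$, it is an isometry for the Poincar\'e pairing $H^6\times H^2\to H^8$, so $\operatorname{tr}\bigl((\iota^\prime)^\ast\mid H^6(X^\prime)\bigr)=-21$ as well, giving $\dim H^6(X^\prime)^{\iota^\prime}=\tfrac12(23-21)=1$. The first image is therefore at most $1$--dimensional; as it contains $(p^\ast h)^3\ne 0$, it is exactly $1$--dimensional.

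The routine part is the reduction to cohomology via corollary \ref{epw}; the one point demanding care is that $\iota^\prime$ is only a birational involution of $X^\prime$, so I must justify that it acts on the whole cohomology ring as a degree--preserving ring automorphism compatible with Poincar\'e duality. I expect this to be the main thing to nail down, and would settle it by passing to the genuine automorphism $\iota_A\in\aut(X_A)$ of the singular model together with the small resolution $s$ (for which $H^\ast(X^\prime)\cong IH^\ast(X_A)$ as $\langle\iota_A\rangle$--modules, respecting the intersection pairing), or equivalently by invoking the Rie\ss\ isomorphism underlying lemma \ref{hk}.
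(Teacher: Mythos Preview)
Your proposal is correct and follows essentially the paper's approach: apply corollary \ref{epw} with $r=1$ to reduce the question to a one--dimensionality statement for the $\iota^\prime$--invariant part of degree--$6$ cohomology, and then deduce the latter from the known action of $\iota$ on $H^2$ via duality. The only cosmetic difference is in the endgame: the paper bounds by $\dim N^3(Y)$ and invokes the non--degenerate pairing $NS(X)^\iota\otimes N^3(X)^\iota\to N^4(X)^\iota\cong\QQ$ together with $\dim NS(X)^\iota=1$ (obtained either from weak Lefschetz for $Y\subset\PP^5$ or from the reflection description in proposition \ref{og1}), whereas you bound by the full $H^6(X^\prime)^{\iota^\prime}$ and compute it by a trace argument via Poincar\'e duality; both routes rest on the same input, and the technical point you flag about the cohomological action of a merely birational involution is implicitly present in the paper's argument as well.
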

  
  \begin{proof} This follows from corollary \ref{epw}, combined with the fact that
    \[ N^3(Y):= \ima \Bigl( A^3(Y)\ \to\ \gr^W_6 H^6(Y)\Bigr) \]
    is of dimension $1$. To see this, since the pairing
    \[ NS(X)^\iota\otimes N^3(X)^\iota\ \to\ N^4(X)^\iota\cong\QQ \]
    is non--degenerate, it suffices to prove 
    \begin{equation}\label{N1}  \dim NS(Y) =\dim NS(X)^\iota=1\ .\end{equation}
    But $\dim \gr^W_{2}H^2(Y)=1$ (weak Lefschetz for the hypersurface $Y\subset\PP^5$), and so
    \[ \dim H^2(X)^\iota=1\ ,\]
    proving (\ref{N1}).
    
    (Alternatively, (\ref{N1}) can also be proven directly: $\iota$ acts on $NS(X)$ as reflection in the span of $D$ (proposition \ref{og1}), and so $NS(X)^\iota=\QQ[D]$ is of dimension $1$.)
     \end{proof}

%
  
  \begin{remark}\label{compare} It is instructive to compare corollary \ref{epw1} with known results concerning the Chow ring of $K3$ surfaces and of Calabi--Yau varieties. For any $K3$ surface $S$, it is known that
   \[ \dim \ima \Bigl( A^1(S)\otimes A^1(S)\ \to\ A^2(S)\Bigr) =1 \]
   \cite{BV}.
   For a generic Calabi--Yau complete intersection $X$ of dimension $n$, it is known that
   \[ \dim \ima \Bigl( A^i(X)\otimes A^{n-i}(X)\ \to\ A^n(X)\Bigr)=1\ ,\ \ \ \forall 0<i<n \]
   \cite{V13}, \cite{LFu}.
  
  The new part of corollary \ref{epw1}, with respect to these results, is the part about 
   \[\ima\Bigl(A^2(X)\otimes A^1(X)\to A^3(X)\Bigr)\ .\] 
   This part is conjecturally true for all EPW sextics \cite{EPWmoi}, but presumably {\em not\/} true for general Calabi--Yau varieties (or even general Calabi--Yau complete intersections). This is related to the question of determining which varieties satisfy Beauville's weak splitting property \cite{Beau3}.
  \end{remark}

 \section{Some questions}
 \label{finsec}

In this final section, we record some questions suggested by the results of this note.

\begin{question} Let $X$ and $\iota$ be as in theorem \ref{main}. Can one prove that $\iota$ preserves the bigrading $A^\ast_{(\ast)}(X)$ of the Chow ring (so that the $\Pi_j^X$ in theorem \ref{main} can be omitted) ? It is a matter of regret that I have not been able to prove this (cf. remark \ref{rem}).
\end{question}

\begin{question} Let $X$ and $\iota$ be as in theorem \ref{main}. What can one say about the action of $\iota$ on $A^3(X)$ ?
This seems more difficult than theorem \ref{main}. Indeed, the action of $\iota$ on $A^2_{hom}$ and on $A^4$ is determined by ``behaviour up to codimension $1$ phenomena''. The action of $\iota$ on $A^3_{(2)}$, on the other hand, should be determined by the action of $\iota$ on $H^{3,1}(X)$, which is not as neat as the action on $H^{2,0}(X)$ and $H^{4,0}(X)$. I am not even sure what the conjectural statement should be.
\end{question}


\begin{question} Let $X^\prime$ be a double EPW sextic as in theorem \ref{og2}, and let $E\cong\PP^2$ denote the exceptional locus of the small resolution $X^\prime\to X_A$. 
Since $E\subset X^\prime$ is a constant cycle subvariety, the ideas developed in \cite{V14} suggest that $E$ should lie in $A^2_{(0)}(X^\prime)$. Can this actually be proven ?
\end{question}

\begin{question}\label{iliev-madonna} Let $X$ be a Hilbert square $X=S^{[2]}$, where $S$ is a very general $K3$ surface of degree 
  \[d=2(4n^2+8n+5)\ ,\ \ \ n\in\NN\ .\] 
  Generalizing O'Grady's result (theorem \ref{og2}, which corresponds to $n=0$), Iliev--Madonna show that $X$ is birational to a double EPW sextic \cite[Theorem 1.1]{IM}, and so there exists an anti--symplectic birational involution $\iota$ on $X$. It follows from Shen--Vial's work (theorem \ref{mck}) that $X$ has an MCK decomposition. Is it possible to prove theorem \ref{main} for $X$ 
and $\iota$ ? 

The case $n=1$ (i.e. $d=34$ and $g(S)=18$) can perhaps be done similarly to theorem \ref{main}: in this case, the general $K3$ surface $S$ is given as zero locus of some vector bundle $V$ on the orthogonal Grassmannian $\og(3,9)$ \cite{Muk1}. The only problem is that I'm not sure whether the vector bundle $V$ is sufficiently ample for Voisin's method of spread to work in this case.

The case $n>1$ seems considerably more difficult, due to the absence of Mukai models for $S$.
\end{question}

\begin{question} Double EPW sextics form a $20$--dimensional family of hyperk\"ahler fourfolds of $K3^{[2]}$ type (and the above--mentioned work \cite{IM} constructs a countably infinite number of codimension $1$ subfamilies, elements of which are birational to Hilbert squares of $K3$ surfaces). More ambitiously than the above question \ref{iliev-madonna}, can one somehow prove theorem \ref{main} for {\em all\/} double EPW sextics ?

One obvious problem is that first (in order for the statement of theorem \ref{main} even to make sense), one would need to construct an MCK decomposition for a general double EPW sextic.
\end{question}

\vskip1cm
\begin{nonumberingt} Thanks to all participants of the Strasbourg 2014/2015 ``groupe de travail'' based on the monograph \cite{Vo} for a very pleasant atmosphere.
Much thanks to Jiji and Baba for hospitably receiving me in Kokuryo, where this note was written \smiley.
\end{nonumberingt}

\vskip1cm

\end{document}